\newtheorem{Theorem}{Theorem}[section]
\newtheorem{Definition}[Theorem]{Definition}
\newtheorem{Proposition}[Theorem]{Proposition}
\newtheorem{Lemma}[Theorem]{Lemma}
\newtheorem{Corollary}[Theorem]{Corollary}
\newtheorem{Remark}[Theorem]{Remark}
\newtheorem{Assumption}[Theorem]{Assumption}
\newcommand{\N}{\mathbb{N}}
\newcommand{\Z}{\mathbb{Z}}
\newcommand{\R}{\mathbb{R}}
\newcommand{\norm}[1]{\left\lVert #1 \right\rVert}	
\newcommand{\normabs}[1]{\left| #1 \right|}	
\newcommand{\abs}[1]{\left| #1 \right|}	
\newcommand{\ra}{\rightarrow}
\newcommand{\Lnorm}[1]{{\left\lVert #1 \right\rVert}_{L^2}}
\newcommand\restr[2]{{
  \left.\kern-\nulldelimiterspace 
  #1 
  \vphantom{\big|} 
  \right|_{#2} 
  }}
\newcommand{\mylabel}[2]{#2\def\@currentlabel{#2}\label{#1}}
\begin{document}

\title{Asymptotics of time-varying processes in continuous-time using locally stationary approximations\\
}
\author{Robert Stelzer and Bennet Ströh\footnote{Ulm University, Institute of Mathematical Finance, Helmholtzstra\ss e 18, 89069 Ulm, Germany. Emails: robert.stelzer@uni-ulm.de, bennet.stroeh@uni-ulm.de. }}

\maketitle

\textwidth=160mm \textheight=225mm \parindent=8mm \frenchspacing
\vspace{3mm}

\begin{abstract}
We introduce a general theory on stationary approximations for locally stationary contin\-uous-time processes. Based on the stationary approximation, we use $\theta$-weak dependence to establish laws of large numbers and central limit type results under different observation schemes. Hereditary properties for a large class of finite and infinite memory transformations show the flexibility of the developed theory. Sufficient conditions for the existence of stationary approximations for time-varying L\'evy-driven state space models are derived and compared to existing results in \cite{BS2021}. We conclude with comprehensive results on the asymptotic behavior of the first and second order localized sample moments of time-varying L\'evy-driven state space models.
\end{abstract}

{\it MSC 2020: primary 60F05, 60F25, 60G10, 60G51; secondary 62M09}  
\\
\\
{\it Keywords: CARMA processes, L\'evy-driven state space models, locally stationary, non-stationary processes, stationary approximations,  weak dependence} 

\section{Introduction}
\label{sec1}
In the last years, extensive research on continuous-time processes led to a variety of flexible yet analytically tractable models \cite{B2014, C2018, KLM2004, MS2007, St2010}. Not least because irregularly spaced time series and high frequency data can be handled, continuous-time processes have been successfully applied in areas as diverse as finance, physics, and engineering \cite{BS2001, BKMV2014, BFK2013, LM2004}. Despite this success, many of the established models as the well-known L\'evy-driven state space model (or equivalently CARMA model), are not capable of modeling non-stationary behavior in observed data. \\
In the discrete-time setting, this problem has, for instance, been treated with a rich theory on locally stationary processes. Starting in \cite{D1996}, a consistent framework for locally stationary time-varying models (see \cite{D2012} for a comprehensive overview) has been developed by Dahlhaus and others. Recently, this approach has been generalized in \cite{BDW2020} and \cite{DRW2019}, where the authors introduce a remarkably versatile approach for local stationarity, including non-linear models of finite and infinite memory, which are beyond the classical linear time series models.\\
However, in the continuous-time setting, things look different. Except for results on time-varying Gaussian-driven diffusion models as in \cite{KL2012}, the only definition on local stationarity in continuous-time including non-Gaussian models was recently given in \cite{BS2021}. There, the authors proposed a parametric approach that can be considered as the continuous-time analog of the original approach from Dahlhaus \cite{D1996}. Unfortunately, there are no statistical results available based on this concept of local stationarity.\\
Overall it seems that, up to now, there is no general theory on non-stationary continuous-time models available that encompasses non-Gaussianity and provides statistical results.\\
In the present work, we address this problem by investigating stationary approximations in the spirit of \cite{DRW2019} for sequences of non-parametric non-stationary continuous-time processes. Based on such approximations, we show hereditary properties under transformations and establish asymptotic results for localized sample moments of such processes. These asymptotic results form the foundation for many $M$-estimation procedures, to which powerful inference methodologies as the maximum likelihood estimator belong. Such $M$-estimators are investigated in \cite{S2021} building on the results of the present paper.\\ 
All asymptotic results will be obtained by assuming that the stationary approximation is $\theta$-weakly dependent as introduced in \cite{DD2003}. $\theta$-weak dependence is inherited under the same transformations as mentioned above such that our results also apply for a large class of transformations, including the variance and covariance operator.\\
As an example, we embed the results from \cite{BS2021} into our theory. There the authors define a locally stationary process as a sequence of moving average processes $Y_N(t)$ with a time-varying kernel function $g_N(t,t-\cdot)$ driven by a L\'evy process, i.e.
\begin{gather}\label{equation:locstatintroduction}
Y_N(t)=\int_{\R} g_N(t,t-s)L(ds),
\end{gather}
where the kernel function $g_N(Nt,\cdot)$ converges in $L^2$ to a continuous square integrable limiting kernel function $g(t,\cdot)$ as $N$ tends to infinity. Noticeable examples of (\ref{equation:locstatintroduction}) come from the class of time-varying L\'evy-driven state space models, for which the time-varying kernel function, as well as the limiting kernel function, are of exponential type (see Section \ref{sec5-3} for a detailed discussion). In \cite{BS2021} it is shown that, under conditions on the limiting kernel $g_N$ and the L\'evy process $L$, time-varying L\'evy-driven state space models are locally stationary with respect to the given definition. \\
Exemplary, we consider sequences of time-varying L\'evy-driven state space models $Y_N(t)$ in the form of (\ref{equation:locstatintroduction}) and give sufficient conditions for the existence of a stationary approximation $\tilde{Y}_u(t)$ for a fixed approximation point $u$. This approximation can be expressed in terms of the limiting kernel function $g$ from \cite{BS2021} as
\begin{gather}\label{equation:locstatapproxintroduction}
\tilde{Y}_u(t)=\int_\R g(u,t-s)L(ds).
\end{gather} 

The paper is structured as follows. In Section \ref{sec2} we start by introducing locally stationary approximations and discuss the different types of observations which will be considered in this work. Section \ref{sec2-2} is dedicated to different dependence measures, which will be needed to establish asymptotic results. $L^1$-mixingale type dependence measures, $\theta$-weak dependence, and important results on their relationship will be reviewed. Then, hereditary properties of locally stationary approximations and hereditary properties of $\theta$-weak dependence under both finite and infinite transformations are discussed in Section \ref{sec2-3} and \ref{sec2-4}. Based on the locally stationary approximations from Section \ref{sec2-1} we give several asymptotic results in Section \ref{sec3}. Global and local laws of large numbers are proved in Section \ref{sec3-1} under minimal moment assumptions for different observation schemes. In Section \ref{sec3-2} we establish central limit type results for similar observation schemes. The proofs are mainly based on asymptotic theory for stationary triangular arrays \cite{DM2002}. Sufficient conditions from \cite{DM2002} are connected to the $\theta$-weak dependence coefficients of the stationary approximation. In Section \ref{sec4-1} we review elementary properties of L\'evy processes, including stochastic integration with respect to them. A brief summary of the main results from \cite{BS2021} form Section \ref{sec4-2} where we additionally show that sequences of processes in the form (\ref{equation:locstatintroduction}) that possess a locally stationary approximation are locally stationary in the sense of \cite{BS2021}. In Section \ref{sec5} we give, starting from time-varying L\'evy-driven Ornstein-Uhlenbeck processes, a comprehensive introduction on general time-varying L\'evy-driven state space models, time-varying L\'evy-driven CARMA models, and comment on their relationship. Sufficient conditions for the existence of locally stationary approximations for these processes are given in Section \ref{sec5-3}. Finally, we apply the developed asymptotic theory from Section \ref{sec3} and provide asymptotic results for the first and second order localized sample moments of time-varying L\'evy-driven state space models under different observations. Results from \cite{CSS2020} are used that give sufficient conditions for processes in the form (\ref{equation:locstatapproxintroduction}) to be $\theta$-weakly dependent if the kernel is of exponential decay.

\subsection{Notation}
\label{sec1-1}
Throughout this paper, we denote the set of positive integers by $\N$, non-negative integers by $\N_0$, positive real numbers by $\R^+$, non-negative real numbers by $\R^+_0$, the set of $m\times n$ matrices over a ring $R$ by $M_{m\times n}(R)$ and $\mathbf{1}_n$ stands for the $n\times n $ identity matrix. For square matrices $A,B\in M_{n\times n}(R)$, $[A,B]=AB-BA$ denotes the commutator of $A$ and $B$. Norms of matrices and vectors are denoted by $\norm{\cdot}$. If the norm is not further specified, we take the Euclidean norm or its induced operator norm, respectively. For a bounded function $h$, $\norm{h}_\infty$ denotes the uniform norm of $h$. In the following Lipschitz continuous is understood to mean globally Lipschitz. For $u,n\in\N$, let $\mathcal{G}_u^*$ be the class of bounded functions from $(\R^n)^u$ to $\R$ and $\mathcal{G}_u$ be the class of bounded, Lipschitz continuous functions from $(\R^n)^u$ to $\R$ with respect to the distance $\sum_{i=1}^{u}\norm{x_i-y_i}$, where $x,y\in(\R^n)^u$. For $G\in\mathcal{G}_u$ we define
\begin{gather*}
Lip(G)=\sup_{x\neq y}\frac{|G(x)-G(y)|}{\norm{x_1-y_1}+\ldots+\norm{x_u-y_u}}.
\end{gather*} 
We shortly write the transpose of a matrix $A \in M_{m\times n}(\R)$ as $A'$. For two sets $U$ and $V$, $|U|$ stands for the cardinality of a set $U$ and $U\triangle V$ for the symmetric difference of the two sets.
The Borel $\sigma$-algebras are denoted by $\mathcal{B}(\cdot)$ and $\lambda$ stands for the Lebesgue measure, at least in the context of measures. For a normed vector space $W$ we denote by $\ell^\infty(W)$ the space of all bounded sequences in $W$. 
In the following, we will assume all stochastic processes and random variables to be defined on a common complete probability space $(\Omega,\mathcal{F},P)$ equipped with an appropriate filtration if necessary. 
Finally, we simply write $L^p$ to denote the space $L^p(\Omega,\mathcal{F},P)$ and $L^p(\R)$ to denote the space $L^p(\R,\mathcal{B}(\R),\lambda)$ with corresponding norms $\norm{\cdot}_{L^p}$. 

\section{Locally stationary approximations in continuous-time and\\ transformations}
\label{sec2}

\subsection{Locally stationary approximations in continuous-time}
\label{sec2-1}
In this paper, we follow the intuitive idea of local stationarity and assume that a sequence of non-stationary processes $Y_N(t)$ can be locally approximated by a stationary process $\tilde{Y}_u(Nt)$, whenever $t$ is close to $u$ and $N$ increases. We consider this approximation to hold in an $L^p$-sense, i.e. 
\begin{gather*}
\norm{Y_N(t)-\tilde{Y}_u(Nt)}_{L^p}
\end{gather*}
is small for $t$ close to $u$. Considering a rescaled time domain $Nt$ for the stationary process $\tilde{Y}_u(Nt)$, enables us to establish asymptotic results (see the introduction of \cite{D2012} for a comprehensive discussion on the effects of rescaling).\\ 
Understanding this approximation in an $L^p$-sense has several advantages. On the one hand, we show that many interesting asymptotic results carry over from the stationary approximation to the non-stationary process while, on the other hand, many continuous-time models are very well understood in an $L^p$-sense such that we present locally stationary approximations for a wide range of non-stationary models. This idea is motivated by recent work in \cite{DRW2019}, where the authors used similar locally stationary approximations in a discrete-time setting to obtain various analytical and statistical results. \\
Mathematically, we express the idea of a locally stationary approximation as follows.

\begin{Definition}\label{definition:statapproxconttime}
Let $Y_N=\{Y_N(t),t\in\R\}_{N\in\N}$ be a sequence of real-valued stochastic processes and $\tilde{Y}=\{\tilde{Y}_u(t),t\in\R\}_{u\in\R^+}$ a family of real-valued stationary processes. We assume that the process $\tilde{Y}_u$ is ergodic for all $u\in\R^+$ and $\sup_{u\in \R^+} \lVert \tilde{Y}_u(0)\rVert_{L^p}<\infty$ for some $p\geq1$. If there exists a constant $C>0$, such that uniformly in $t\in \R$ and $u,v\in\R^+$
\begin{gather}
\norm{\tilde{Y}_u(t)-\tilde{Y}_v(t)}_{L^p}\leq C \abs{u-v} \text{ and } \norm{Y_N(t)-\tilde{Y}_{t}(Nt)}_{L^p} \leq C \frac{1}{N},\label{assumption:LS}\tag{LS}
\end{gather}
then we call $\tilde{Y}_u$ a \normalfont{locally stationary approximation} \textit{of the sequence} $Y_N$ \normalfont{for} $p$. 
\end{Definition}

Examples of sequences $Y_N$ with locally stationary approximation $\tilde{Y}_u$ are given in Section \ref{sec5}. In particular, we derive sufficient conditions for the existence of locally stationary approximations of time-varying L\'evy-driven state space models. \\

Throughout this paper, we assume that either continuous-time observations of the process $Y_N$ are available or that the process has been sampled in the following way.

\begin{Assumption}\label{assumption:observations}
For fixed $N\in\N$ and $u\in\R^+$ we assume $Y_N$ to be equidistantly observed at times $\tau_i^N=u+i\delta_N$ with grid size $\delta_N=|\tau_i^N-\tau_{i-1}^N|$ such that $\delta_N\downarrow 0$ for $N\rightarrow\infty$. For a sequence $b_N\downarrow0$ we consider the observation window $[u-b_N,u+b_N]$ and set $m_N=\lfloor\frac{b_N}{\delta_N}\rfloor$. Thus, the number of observations is given by $2m_N+1=|\{i\in\Z: \tau_i^N\in [u-b_N,u+b_N]\}|$. We require $\frac{b_N}{\delta_N}\rightarrow\infty$ as $N\rightarrow\infty$ and either
\begin{enumerate}[label={(O\arabic*)}]
\item $N\delta_N=\delta>0$ for all $N\in\N$ or \label{observations:O1}
\item $N\delta_N\rightarrow\infty$ as $N\rightarrow\infty$.\label{observations:O2}
\end{enumerate}
\end{Assumption}

Note that these conditions on $N$, $b_N$ and $\delta_N$ immediately imply $Nb_N\rightarrow\infty$ as $N\rightarrow\infty$. The two conditions \ref{observations:O1} and \ref{observations:O2} describe how the locally stationary approximation $\tilde{Y}_u$ is observed, when the non-stationary process $Y_N$ is sampled. In the case when \ref{observations:O1} holds, the stationary process $\tilde{Y}_u$ is observed equidistantly. Under \ref{observations:O2} the distance of two observations of the process $\tilde{Y}_u$ tends to $\infty$ as $N\rightarrow\infty$.

\subsection{$L^1$-mixingale type dependence measures and $\theta$-weak dependence}
\label{sec2-2}
Considering a set of observations $\mathcal{T}_N=\{\tau_i^N,i=-m_N,\ldots,m_N\}$ as described above we note that this set contracts (with rate $b_N$) towards $u$ as $N\rightarrow\infty$.\\ 
In fact, this contraction allows us to replace the observations $\{Y_N(\tau)\}_{\tau\in\mathcal{T}_N}$ in $L^p$ by the corresponding observations of the triangular array $\{\tilde{Y}_u(N\tau)\}_{\tau\in\mathcal{T}_N}$ at rate $\frac{1}{N}+b_N$ (see e.g. the proof of Theorem \ref{theorem:lawoflargenumbersdiscreteobs}). As usual, to establish asymptotic results for triangular arrays, it is necessary to impose additional conditions on the dependence structure as $N$ increases.\\
By imposing conditions on the dependence structure of only the locally stationary approximation and avoiding assumptions on $Y_N$, we additionally allow for some model misspecification in $Y_N$ like an additional noise component which is decreasing in $N$ and not captured in $Y_N$. \\
All upcoming asymptotic results will be based on the following $L^1$-mixingale type measure of dependence.

\begin{Definition}\label{definition:L1measuresofdependence}
For a $\sigma$-algebra $\mathscr{M}$ and a real-valued integrable random variable $X$ we define
\begin{enumerate}[label={(\alph*)}]
\item $\gamma(\mathscr{M},X)=\norm{E[X|\mathscr{M}]-E[X]}_{1}$ and
\item $\theta(\mathscr{M},X)=\sup_{g\in  \mathscr{L}_1} \norm{E[g(X)|\mathscr{M}]-E[g(X)]}_{1}$,
\end{enumerate}
where $\mathscr{L}_1=\{g:\R\rightarrow \R,$  bounded and Lipschitz with $Lip(g)\leq1\}$.
For a stationary and integrable stochastic process $(X_k)_{k\in\Z}$ we define the following dependence coefficients 
\begin{align*}
\gamma_h&= \gamma\big(\mathscr{M}_0,X_h\big) \text{ and }\\
\theta_h&=\theta\big(\mathscr{M}_0,X_h\big),
\end{align*}
where $h\in\N$ and $\mathscr{M}_0=\sigma(\{X_k,k\leq0\})$.
\end{Definition}

In particular, these measures will be readily accessible for many models through their connection to the following notion of weak dependence, called $\theta$-weak dependence.

\begin{Definition}\label{thetaweaklydependent}
Let $X=\{X(t)\}_{t\in\R}$ be a real-valued stochastic process. Then, $X$ is called $\theta$-weakly dependent if
\begin{gather*}
\theta(h)=\sup_{v\in\N}\theta_{v}(h) \underset{h\ra\infty}{\longrightarrow} 0,
\end{gather*} 
where
\begin{align*}
\theta_{v}(h)\!=\!\sup\bigg\{\frac{|Cov(F(X(i_1),\ldots,X(i_v)),G(X(j)))|}{\norm{F}_{\infty}Lip(G)}, F\in\mathcal{G}^*_v,G\in\mathcal{G}_1, i_1\leq\ldots\leq i_v\leq i_v+h\leq j \bigg\}.
\end{align*}
We call $(\theta(h))_{h\in\R_0^+}$ the $\theta$-coefficients. 
\end{Definition}

The following lemma gives a collection of auxiliary statements that help to connect the $L^1$-mixingale-type measure of dependence of a process with respect to its natural filtration to its $\theta$-weak dependence coefficient. 

\begin{Lemma}\label{lemma:mixingaleandthetaweak}
For a $\sigma$-algebra $\mathscr{M}$, a real-valued integrable random variable $Y$ and a real-valued stochastic process $X=\{X(t)\}_{t\in\R}$ it holds
\begin{enumerate}[label={(\alph*)}]
\item$\gamma(\mathscr{M},Y)\leq\theta(\mathscr{M},Y)\text{ such that }\gamma_h\leq\theta_h, \quad h\in\N$,
\item$\theta_h=\theta(h),  \quad h\in\N,$
\end{enumerate}
where $\theta(h)$ is the $\theta$-coefficient of $X$.\\ 
Now, additionally assume that $X$ is centered and $\theta$-weakly dependent with $\theta$-coefficient $\theta(h)$, $\mathscr{M}_0=\sigma(\{X_k,k\in\Z,k\leq0\})$ and $E[|X_0|^{2+\varepsilon}]<\infty$ for some $\varepsilon>0$. Then,
\begin{enumerate}[label={(\alph*)}]
\setcounter{enumi}{2}
\item $\sum_{k=1}^\infty\norm{X_0E[X_k|\mathscr{M}_0]}_{L^1}\leq D \left(\sum_{h=1}^\infty(h+1)^{\frac{1}{\varepsilon}}\theta(h)\right)^{\frac{\varepsilon}{1+\varepsilon}}$ for some constant $D>0$.
\end{enumerate}
\end{Lemma}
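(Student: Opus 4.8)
The three parts are of rather different character: (a) and (b) are soft, and the substance is in (c).

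For (a), the plan is to remove the only obstruction — that the identity $x\mapsto x$ is $1$-Lipschitz but not bounded, hence not an admissible test function for $\theta(\mathscr{M},\cdot)$ — by truncation. For $M>0$ set $g_M(x)=(-M)\vee(x\wedge M)\in\mathscr{L}_1$; since $|g_M(Y)|\le|Y|\in L^1$, dominated convergence gives $g_M(Y)\to Y$ in $L^1$, hence $E[g_M(Y)|\mathscr{M}]-E[g_M(Y)]\to E[Y|\mathscr{M}]-E[Y]$ in $L^1$ (conditional expectation is an $L^1$-contraction). Each left-hand side has $L^1$-norm at most $\theta(\mathscr{M},Y)$, so letting $M\to\infty$ gives $\gamma(\mathscr{M},Y)\le\theta(\mathscr{M},Y)$; specializing $\mathscr{M}=\mathscr{M}_0$, $Y=X_h$ yields $\gamma_h\le\theta_h$. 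For (b) I would argue by duality and finite-dimensional approximation: with $W=E[g(X_h)|\mathscr{M}_0]-E[g(X_h)]$ one has $\norm{W}_{L^1}=\sup\{E[ZW]:Z\ \mathscr{M}_0\text{-measurable},\ |Z|\le1\}$ and $E[ZW]=Cov(Z,g(X_h))$ since $Z$ is $\mathscr{M}_0$-measurable; approximating $Z$ in $L^1$ by $E[Z\mid\sigma(X_{-n},\ldots,X_0)]=F_n(X_{-n},\ldots,X_0)$ with $\norm{F_n}_\infty\le1$ and using $Lip(g)\le1$, the definition of $\theta_{n+1}(h)$ gives $|Cov(F_n,g(X_h))|\le\theta(h)$, so passing to the limit gives $\theta_h\le\theta(h)$. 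Conversely, every covariance entering $\theta(h)$ becomes, after a time shift by $-i_v$ and normalization, a term $Cov(Z,\tilde G(X_\ell))$ with $Z$ $\mathscr{M}_0$-measurable, $\norm{Z}_\infty\le1$, $\tilde G\in\mathscr{L}_1$ and $\ell=j-i_v\ge h$, which is at most $\norm{E[\tilde G(X_\ell)|\mathscr{M}_0]-E[\tilde G(X_\ell)]}_{L^1}\le\theta_\ell\le\theta_h$, the last inequality because $\ell\mapsto\theta_\ell$ is non-increasing ($\mathscr{M}_{h-\ell}\subseteq\mathscr{M}_0$ for $\ell\ge h$ by stationarity, and enlarging the conditioning $\sigma$-field cannot decrease the $L^1$-distance); hence $\theta(h)\le\theta_h$.

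For (c), the starting point is that $X_0$ is $\mathscr{M}_0$-measurable, so
\begin{gather*}
\norm{X_0E[X_k|\mathscr{M}_0]}_{L^1}=E\!\left[\operatorname{sgn}\!\big(X_0E[X_k|\mathscr{M}_0]\big)\,X_0\,X_k\right]=Cov(Y_k,X_k),\qquad Y_k:=\operatorname{sgn}(\cdots)\,|X_0|,
\end{gather*}
with $Y_k$ $\mathscr{M}_0$-measurable and $|Y_k|=|X_0|$. Writing $\mu:=E[|X_0|^{2+\varepsilon}]$, I would then, for a truncation level $M>0$, split $Y_k$ (and, if needed, $X_k$) into a part bounded by $M$ and a remainder. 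By parts (a)--(b), together with the approximation of $\mathscr{M}_0$-measurable functions used there, the bounded part contributes at most a constant times $M\gamma_k\le M\theta(k)$, while the remainder is controlled purely by moments, via Markov's inequality and H\"older, by a constant times $\mu M^{-\varepsilon}$; this gives, for every $M>0$ and $k\ge1$, the estimate $\norm{X_0E[X_k|\mathscr{M}_0]}_{L^1}\le C\big(M\theta(k)+\mu M^{-\varepsilon}\big)$. Alternatively the same order follows directly from H\"older and interpolation, $\norm{X_0E[X_k|\mathscr{M}_0]}_{L^1}\le\norm{X_0}_{L^{2+\varepsilon}}\norm{E[X_k|\mathscr{M}_0]}_{L^{(2+\varepsilon)/(1+\varepsilon)}}\le C\,\mu^{1/(1+\varepsilon)}\theta(k)^{\varepsilon/(1+\varepsilon)}$, using $\norm{E[X_k|\mathscr{M}_0]}_{L^1}=\gamma_k\le\theta(k)$ and $\norm{E[X_k|\mathscr{M}_0]}_{L^{2+\varepsilon}}\le\mu^{1/(2+\varepsilon)}$.

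The remaining step — summing these estimates over $k$ to reach the weighted right-hand side — is the delicate one and the main obstacle. Optimising the truncation level lag by lag is too wasteful: it only yields $\sum_k\theta(k)^{\varepsilon/(1+\varepsilon)}$, which in general is not dominated by $\big(\sum_h(h+1)^{1/\varepsilon}\theta(h)\big)^{\varepsilon/(1+\varepsilon)}$. One therefore has to organise the truncation globally — e.g. in dyadic blocks in $k$, or, more efficiently, through the quantile-function covariance inequality of \cite{DD2003}, which bounds $Cov(Y_k,X_k)$ by an integral of $Q_{|X_0|}\,Q_{|E[X_k|\mathscr{M}_0]|}$, with $Q_{|X_0|}(u)\le\mu^{1/(2+\varepsilon)}u^{-1/(2+\varepsilon)}$ coming from the moment bound and $\int_0^1 Q_{|E[X_k|\mathscr{M}_0]|}(u)\,du=\gamma_k\le\theta(k)$ — and then to apply a H\"older inequality on the resulting series with conjugate exponents $\tfrac{1+\varepsilon}{\varepsilon}$ and $1+\varepsilon$; the power $\tfrac{\varepsilon}{1+\varepsilon}$ and the weight $(h+1)^{1/\varepsilon}$ in the statement are precisely what this produces. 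This last computation is essentially the one carried out in \cite{DD2003}, and it, rather than the truncation estimates, is where the work lies.
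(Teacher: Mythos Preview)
Your proposal is correct and follows essentially the same route as the paper, which disposes of (a) as ``clear'', cites \cite[Lemma 5.1]{CSS2020} for (b), and defers (c) entirely to the proof of \cite[Lemma 2 and Corollary 1]{DD2003}. You have simply unpacked these references: the truncation argument for (a), the duality/martingale-approximation argument for (b), and---crucially---the recognition that the quantile-function covariance inequality of Dedecker--Doukhan, not lag-by-lag optimisation, is what produces the weight $(h+1)^{1/\varepsilon}$ and the exponent $\varepsilon/(1+\varepsilon)$ in (c).
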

\begin{proof}
The first part of this Lemma is clear. The second part follows from \cite[Lemma 5.1]{CSS2020} for $m=1$. The third part follows from the proof of \cite[Lemma 2 and Corollary 1]{DD2003}. 
\end{proof}

Later, Lemma \ref{lemma:mixingaleandthetaweak} will pave the way to show asymptotic results for sequences of processes that possess a locally stationary approximation that is $\theta$-weakly dependent.

\subsection{Hereditary properties under finite memory transformations}
\label{sec2-3}
In this and the next subsection we investigate hereditary properties of locally stationary approximations and $\theta$-weak dependence under transformations. \\
Consider a sequence of stochastic processes $Y_N$ with locally stationary approximation $\tilde{Y}_u$ for some $p\geq1$. For $k\in\N_0$ define the finite memory vectors
\begin{align*}
Z_N^k(t)=\left(Y_N(t),\ldots,Y_N\left(t-\frac{k}{N}\right)\right) \text{ and } \tilde{Z}_u^k(t)=(\tilde{Y}_u(t),\ldots,\tilde{Y}_u(t-k)). 
\end{align*}

\begin{Definition}[{\cite[Definition 2.4]{DRW2019}}]
A measurable function $g:\R^{k+1}\rightarrow\R$ is said to be in the class $\mathcal{L}_{k+1}(M,C)$ for $M,C\geq0$, if 
\begin{align*}
\sup_{x\neq y}\frac{|g(x)-g(y)|}{\norm{x-y}_1(1+\norm{x}_1^M+\norm{y}_1^M)}\leq C.
\end{align*}
\end{Definition}

\begin{Proposition}\label{proposition:inheritanceproperties}
Let $Y_N$ be a sequence of stochastic processes with locally stationary approximation $\tilde{Y}_u$ for some $\tilde{p}=p(M+1)$, where $p\geq1$, $M\geq0$. Then, for $g\in\mathcal{L}_{k+1}(M,C)$, it holds:
\begin{enumerate}[label={(\alph*)}]
\item $g(\tilde{Z}_u^k)$ is a locally stationary approximation of the sequence $g(Z_N^k)$ for $p$.
\item If $\tilde{Y}_u$ is $\theta$-weakly dependent with $\theta$-coefficients $\theta_{\tilde{Y}_u}(h)$, then $\tilde{Z}_u^k$ is $\theta$-weakly dependent with $\theta$-coefficients $\theta_{\tilde{Z}_u^k}(h)\leq (k+1) \theta_{\tilde{Y}_u}(h-(k+1))$ for $h\geq (k+1)$.
\item If $\tilde{Y}_u$ is $\theta$-weakly dependent with $\theta$-coefficients $\theta_{\tilde{Y}_u}(h)$, $E[|\tilde{Y}_u(t)|^{(1+M+\varepsilon)}]<\infty$ for some $\varepsilon>0$ and additionally $|g(x)|\leq \tilde{C} \norm{x}_1^{M+1}$ for a constant $\tilde{C}>0$, then $g(\tilde{Z}_u^k)$ is $\theta$-weakly dependent with $\theta$-coefficients 
\begin{gather*}
\theta_{g(\tilde{Z}_u^k)}(h)=\mathcal{O}\left(\theta_{\tilde{Y}_u}(h)^{\frac{\varepsilon}{M+\varepsilon}}\right).
\end{gather*}
\end{enumerate}
\end{Proposition}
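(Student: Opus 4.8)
The plan is to prove the three parts in order, each building on standard estimates.

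For part (a), I would start from the defining inequalities \eqref{assumption:LS} for $\tilde Y_u$ and translate them into the corresponding inequalities for $g(\tilde Z_u^k)$. The key tool is the growth/Lipschitz bound inherent in the class $\mathcal{L}_{k+1}(M,C)$: for $x,y\in\R^{k+1}$,
\[
|g(x)-g(y)|\le C\,\norm{x-y}_1\bigl(1+\norm{x}_1^M+\norm{y}_1^M\bigr).
\]
Applying this with $x=\tilde Z_u^k(t)$, $y=\tilde Z_v^k(t)$ and then using H\"older's inequality with exponents $M+1$ and $(M+1)/M$ gives
\[
\norm{g(\tilde Z_u^k(t))-g(\tilde Z_v^k(t))}_{L^p}
\le C\,\norm{\,\norm{\tilde Z_u^k(t)-\tilde Z_v^k(t)}_1\,}_{L^{p(M+1)}}
\Bigl(1+2\sup_{w}\norm{\,\norm{\tilde Z_w^k(t)}_1^M\,}_{L^{p(M+1)/M}}\Bigr).
\]
The first factor is controlled by $\sum_{j=0}^k\norm{\tilde Y_u(t-j)-\tilde Y_v(t-j)}_{L^{\tilde p}}\le (k+1)C|u-v|$ by stationarity and \eqref{assumption:LS} at level $\tilde p=p(M+1)$; the second factor is finite and uniformly bounded because $\sup_u\norm{\tilde Y_u(0)}_{L^{\tilde p}}<\infty$. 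This yields the first inequality in \eqref{assumption:LS} for $g(\tilde Z_u^k)$ with a new constant. The second inequality, $\norm{g(Z_N^k(t))-g(\tilde Z_t^k(Nt))}_{L^p}\le C'/N$, is handled identically, using $\norm{Z_N^k(t)-\tilde Z_t^k(Nt)}_1\le\sum_{j=0}^k\norm{Y_N(t-j/N)-\tilde Y_t(Nt-j)}$ and the second bound in \eqref{assumption:LS}; here one needs moment bounds on $Z_N^k$ as well, which follow from those on $\tilde Z$ together with the $1/N$ approximation. Ergodicity of $g(\tilde Z_u^k)$ is immediate since it is a measurable function of the ergodic process $\tilde Y_u$ (via a block coding).

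For part (b), the claim is purely about $\theta$-weak dependence of the vector-valued process $\tilde Z_u^k$, and is essentially a bookkeeping argument. Given $F\in\mathcal{G}^*_v$ and $G\in\mathcal{G}_1$ on tuples of $\tilde Z_u^k$-values at times $i_1\le\dots\le i_v\le i_v+h\le j$, I rewrite $F(\tilde Z_u^k(i_1),\dots,\tilde Z_u^k(i_v))$ as a bounded function $\tilde F$ of the scalar values $\tilde Y_u$ at the $\le v(k+1)$ time points $\{i_r-\ell:0\le\ell\le k\}$, and $G(\tilde Z_u^k(j))$ as a function $\tilde G$ of $\tilde Y_u$ at $\{j,\dots,j-k\}$. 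One checks $\norm{\tilde F}_\infty=\norm{F}_\infty$ and $Lip(\tilde G)\le Lip(G)$ (the $\ell^1$-distance only gets spread over more coordinates, not amplified). The time gap between the last argument of $\tilde F$ (which is $\le i_v$) and the first argument of $\tilde G$ (which is $\ge j-k$) is at least $h-k$; but after also collapsing the $k+1$ arguments of $\tilde G$ by a telescoping/triangle-inequality expansion — writing the covariance as a sum of $k+1$ covariances each involving a single new $\tilde Y_u$-coordinate against $\tilde F$ — one arrives at a bound $(k+1)\,\theta_{\tilde Y_u}(h-(k+1))$, exactly as stated. I expect the only subtlety is getting the index shift $h-(k+1)$ rather than $h-k$ right, which comes from the worst-case placement of the $\tilde G$-arguments.

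For part (c), I would combine (a) and (b) with part (c) of Lemma~\ref{lemma:mixingaleandthetaweak}, or more directly re-run the $\theta$-coefficient estimate for $g(\tilde Z_u^k)$ using the sublinearity of $g$. Fix $F\in\mathcal{G}^*_v$, $G\in\mathcal{G}_1$ and times with gap $h$. Split according to whether $\norm{\tilde Z_u^k(j)}_1$ exceeds a truncation level $T$: on the event $\{\norm{\tilde Z_u^k(j)}_1\le T\}$ the function $g(\tilde Z_u^k(\cdot))$ restricted there is Lipschitz with constant $\mathcal{O}(T^M)$ (from the $\mathcal{L}_{k+1}(M,C)$ bound), so that contribution to the covariance is $\lesssim T^M\,\theta_{\tilde Z_u^k}(h)\lesssim T^M\theta_{\tilde Y_u}(h)$ by part (b); on the complement, using $|g(x)|\le\tilde C\norm{x}_1^{M+1}$ and the moment assumption $E[|\tilde Y_u(0)|^{1+M+\varepsilon}]<\infty$, a Markov/H\"older estimate bounds the contribution by $\mathcal{O}(T^{-\varepsilon})$ (up to $\norm{F}_\infty Lip(G)$). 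Optimizing over $T$ by setting $T^{M+\varepsilon}\asymp\theta_{\tilde Y_u}(h)^{-1}$ gives the stated rate $\theta_{g(\tilde Z_u^k)}(h)=\mathcal{O}\bigl(\theta_{\tilde Y_u}(h)^{\varepsilon/(M+\varepsilon)}\bigr)$. The main obstacle here is the truncation argument: one must be careful that truncating the argument of $g$ (rather than of $G\circ g$) still produces an admissible test function, and that the Lipschitz constant of the truncated composition is controlled uniformly; this is where the precise form of the class $\mathcal{L}_{k+1}(M,C)$ and the boundedness of $G$ do the work. This argument is the discrete-time Lemma of \cite[Section~3]{DRW2019} adapted to the continuous-time indexing, so I would cite that proof for the bookkeeping and only spell out the points where the rescaled time enters.
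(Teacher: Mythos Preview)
Your proposal is correct and follows essentially the same route as the paper's proof: the paper simply says part (a) is immediate from H\"older's inequality, refers part (b) to \cite[Lemma~1]{DMT2008}, and refers part (c) to \cite[Lemma~6]{BDL2008}, whereas you have spelled out the content of those arguments (H\"older with exponents $M+1$ and $(M+1)/M$, the covariance bookkeeping for the vector process, and the truncation-and-optimize argument) in detail. One small point in (a) worth tightening: in the bound $\sum_{j=0}^k\norm{Y_N(t-j/N)-\tilde Y_t(Nt-j)}_{L^{\tilde p}}$ you implicitly use both halves of \eqref{assumption:LS}, first to pass from $Y_N(t-j/N)$ to $\tilde Y_{t-j/N}(Nt-j)$ and then from $\tilde Y_{t-j/N}$ to $\tilde Y_t$, picking up an extra $Cj/N\le Ck/N$; this is harmless but should be made explicit.
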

\begin{proof}
Part (a) follows immediately from Hoelder's inequality. For part (b) we refer to \cite[Lemma 1]{DMT2008}. Finally, part (c) follows from \cite[Lemma 6]{BDL2008}.
\end{proof}

\begin{Remark}\label{remark:inheritancepolynomials}
Proposition \ref{proposition:inheritanceproperties} implies that all results on $Y_N$, that we give in the next sections, are also valid for transformations $g(Y_N^k)$ with $g\in\mathcal{L}_{k+1}(M,C)$ under additional moment conditions. Important examples of functions in $\mathcal{L}_{k+1}(M,C)$ are polynomials $h:\R^{k+1}\rightarrow\R$ with degree at most $(M+1)$, satisfying $h(0)=0$. Note that this also includes the covariance operator at lag $k$, i.e. $g(x_0,x_1,\ldots,x_k)=x_0x_k$, such that $g\in\mathcal{L}_{k+1}(1,1)$.
\end{Remark}

\begin{Remark}
In fact, the condition $|g(x)|\leq C \norm{x}_1^{M+1}$ from Proposition \ref{proposition:inheritanceproperties} (c) can be replaced by the equivalent condition $g(0)=0$ (see \cite[Proposition 3.4]{CS2018}).
\end{Remark}

\subsection{Hereditary properties under infinite memory transformations}
\label{sec2-4}
Although the class $\mathcal{L}_{k+1}$ from above includes important transformations as the covariance operator, it does not cover transformations of vectors that are of infinite memory, i.e. 
\begin{align*}
Z_N(t)=\left(Y_N(t),Y_N\left(t-\frac{1}{N}\right),\ldots\right)\text{ and }\tilde{Z}_u(t)=(\tilde{Y}_u(t),\tilde{Y}_u(t-1),\ldots).
\end{align*} 
Controlling such transformations is for instance important to prove asymptotic properties of certain estimators that are defined in terms of the full history of a process. The following class of functions will preserve locally stationary approximations of such infinite memory vectors and is a modification of equation (9) in \cite{BDW2020}.
%

\begin{Definition}\label{definition:functionclassinfinite}
A measurable function $g:\R^\infty\rightarrow\R$ belongs to the class $\mathcal{L}_\infty^{p,q}(\alpha)$ for $p,q \geq1$, if there exists a sequence $\alpha=(\alpha_k(g))_{k\in\N_0}\subset\R_0^+$ satisfying $\sum_{k=0}^\infty\alpha_k(g)<\infty$ and a function $f:\R_0^+\rightarrow\R_0^+$ such that for all sequences $X=(X_k)_{k\in\N_0}\in \ell^\infty(L^q)$ and $Y=(Y_k)_{k\in\N_0}\in\ell^\infty(L^q)$ it holds
\begin{align*}\label{equation:functionclassinfinite}
\norm{g(X)-g(Y)}_{L^p}\leq f\left(\sup_{k\in\N_0}\{\norm{X_k}_{L^q}\vee \norm{Y_k}_{L^q}\}\right) \sum_{k=0}^\infty \alpha_k(g)\norm{X_k-Y_k}_{L^q}.
\end{align*}
\end{Definition}

The following result follows immediately from the Definition of the class $\mathcal{L}_\infty^{p,q}$.

\begin{Proposition}\label{proposition:inheritancepropertiesinfinite}
Let $Y_N$ be a sequence of stochastic processes with locally stationary approximation $\tilde{Y}_u$ for some $q\geq1$ and $g\in\mathcal{L}_\infty^{p,q}(\alpha)$ for some $p\geq 1$ such that $\sum_{k=0}^\infty k\alpha_k(g)<\infty$. Then, $g(\tilde{Z}_u(t))$ is a locally stationary approximation of $g(Z_N(t))$ for $p$.
\end{Proposition}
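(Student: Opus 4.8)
The plan is to verify the two conditions in Definition \ref{definition:statapproxconttime} for the family $g(\tilde{Z}_u(t))$ relative to the sequence $g(Z_N(t))$, using the defining inequality of the class $\mathcal{L}_\infty^{p,q}(\alpha)$ together with the fact that $\tilde{Y}_u$ is already a locally stationary approximation of $Y_N$ for $q$. First I would record the preliminary observations: stationarity of $g(\tilde{Z}_u(t))$ in $t$ is inherited from stationarity of $\tilde{Y}_u$ (since $\tilde{Z}_u(t)$ is a fixed measurable functional of the shifted path), and ergodicity is preserved because $g(\tilde{Z}_u(\cdot))$ is a measurable image of the ergodic process $\tilde{Y}_u$. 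The uniform $L^p$-bound $\sup_u \|g(\tilde{Z}_u(0))\|_{L^p}<\infty$ follows by applying the $\mathcal{L}_\infty^{p,q}$ inequality with $Y\equiv 0$ (or any fixed reference sequence) and using $\sup_u \sup_k \|\tilde{Y}_u(-k)\|_{L^q} = \sup_u \|\tilde{Y}_u(0)\|_{L^q}<\infty$, so that $f$ evaluated at this finite quantity is a finite constant and $\sum_k \alpha_k(g)<\infty$; this step also supplies the uniform envelope value $R:=\sup_{u\in\R^+}\|\tilde{Y}_u(0)\|_{L^q}$ that controls $f$ throughout.

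For the first inequality in \eqref{assumption:LS}, namely $\|g(\tilde{Z}_u(t))-g(\tilde{Z}_v(t))\|_{L^p}\le C'|u-v|$, I would apply the $\mathcal{L}_\infty^{p,q}$ estimate to $X=\tilde{Z}_u(t)$ and $Y=\tilde{Z}_v(t)$. Since both coordinate sequences have $L^q$-norms bounded by $R$, the prefactor is at most $f(R)$, and the sum becomes $\sum_{k\ge0}\alpha_k(g)\,\|\tilde{Y}_u(t-k)-\tilde{Y}_v(t-k)\|_{L^q}$, which by the first part of \eqref{assumption:LS} applied to $\tilde{Y}$ is bounded by $C|u-v|\sum_{k\ge0}\alpha_k(g)$; here only $\sum_k\alpha_k(g)<\infty$ is needed, not the stronger $k$-weighted summability. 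For the second inequality, $\|g(Z_N(t))-g(\tilde{Z}_t(Nt))\|_{L^p}\le C'/N$, I would again invoke the $\mathcal{L}_\infty^{p,q}$ bound with $X=Z_N(t)$ and $Y=\tilde{Z}_t(Nt)$. The $k$-th coordinate difference is $Y_N(t-k/N)-\tilde{Y}_{t}(Nt-k) = Y_N(t-k/N)-\tilde{Y}_{t}(N(t-k/N))$; this is \emph{almost} the second bound in \eqref{assumption:LS}, but the approximation point there should be $t-k/N$ rather than $t$, so I would split $Y_N(t-k/N)-\tilde{Y}_{t-k/N}(N(t-k/N))$ plus $\tilde{Y}_{t-k/N}(N(t-k/N))-\tilde{Y}_{t}(N(t-k/N))$, bounding the first summand by $C/N$ and the second by $C\cdot k/N$ via the first inequality in \eqref{assumption:LS}. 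Summing against $\alpha_k(g)$ gives $\frac{C}{N}(\sum_k\alpha_k(g) + \sum_k k\,\alpha_k(g))$, which is $\mathcal{O}(1/N)$ precisely because of the hypothesis $\sum_k k\alpha_k(g)<\infty$. A minor technical point is that one must first ensure $Z_N(t)\in\ell^\infty(L^q)$ so that the $\mathcal{L}_\infty^{p,q}$ inequality is applicable to it; this follows because $\|Y_N(t-k/N)\|_{L^q}\le \|\tilde{Y}_{t-k/N}(N(t-k/N))\|_{L^q}+C/N\le R+C$ uniformly in $k$ and $N$.

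I expect no serious obstacle here — as the paper already signals, the result "follows immediately" — but the one place that requires care is the bookkeeping in the second inequality: recognizing that the shift-by-$k$ in the memory vector induces a mismatch of size $k/N$ in the approximation point, and that this is exactly what the first part of \eqref{assumption:LS} is designed to absorb, at the cost of needing the $k$-weighted summability of $\alpha$. The constant $C'$ in the conclusion can be taken as, say, $f(R)\cdot C\cdot(\,\sum_k\alpha_k(g) + \sum_k k\alpha_k(g)\,)$, uniform in $t$, $u$, $v$ and $N$ as required.
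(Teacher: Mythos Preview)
Your proposal is correct and follows exactly the approach the paper has in mind; the paper merely states that the result ``follows immediately from the Definition of the class $\mathcal{L}_\infty^{p,q}$'' without spelling out any details. Your write-up supplies precisely those details, and in particular your identification of the splitting $Y_N(t-k/N)-\tilde{Y}_t(N(t-k/N)) = \big(Y_N(t-k/N)-\tilde{Y}_{t-k/N}(N(t-k/N))\big) + \big(\tilde{Y}_{t-k/N}(N(t-k/N))-\tilde{Y}_t(N(t-k/N))\big)$ as the reason the hypothesis $\sum_k k\alpha_k(g)<\infty$ enters is exactly right.
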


As we will see in the next remark, the class $\mathcal{L}_{k+1}$ can be embedded in the class $\mathcal{L}_\infty^{1,(M+1)}$.

\begin{Remark}
Asumme that $g\in \mathcal{L}_{k+1}(M,C)$. Then, $\tilde{g}=\left(g,0,\ldots\right)\in\mathcal{L}_\infty^{1,(M+1)}(\alpha)$, where 
\begin{gather*}
\alpha_n(\tilde{g})=\begin{cases}1,n\leq k+1,\\ 0,n>k+1\end{cases} \text{ and } f(x)=C(1+2(k+1)x^M).
\end{gather*}
\end{Remark}

\section{Asymptotic results}
\label{sec3} 
 
In this section, we discuss various asymptotic results including law of large numbers and central limit type results for sequences of stochastic processes that possess a locally stationary approximation. These asymptotic results provide the foundation for many statistical inference methodologies.

\subsection{Laws of large numbers}
\label{sec3-1}

We give a series of laws of large numbers for different observation schemes. 

\subsubsection{Law of large numbers under continuous observations}
\label{sec3-1-1}
In the following, we give a global and a local law of large numbers. In order to obtain localized estimators, we introduce localizing kernels.

\begin{Definition}\label{definition:localizingkernel}
Let $K:\R\rightarrow \R$ be a bounded function. If $K$ is of bounded variation, has compact support $[-1,1]$ and satisfies $\int_\R K(x)dx=1$, then we call $K$ a localizing kernel.
\end{Definition}

From now on, if not otherwise stated, $K$ always denotes a localizing kernel.


\begin{Theorem}\label{theorem:lawoflargenumberscontobs}
Consider a sequence of stochastic processes $Y_N$ with locally stationary approximation $\tilde{Y}_u$ for some $p\geq1$. Then:
\begin{enumerate}[label={(\alph*)}]
\item For all $t\geq0$,
\begin{gather*}
\frac{1}{t} \int_{0}^t Y_N(\nu)d\nu \overset{L^p}{\underset{N\rightarrow\infty}{\longrightarrow}} \int_{0}^t E[\tilde{Y}_u(0)]du.
\end{gather*}
\item Let $K$ be a localizing kernel that is additionally differentiable and $b_N\downarrow0$ with $Nb_N\rightarrow\infty$. Then, 
\begin{gather*}
\frac{1}{b_N}\int_{u-b_N}^{u+b_N} K\left(\frac{\tau-u}{b_N}\right)Y_N(\tau) d\tau \overset{L^p}{\underset{N\rightarrow\infty}{\longrightarrow}}E[\tilde{Y}_u(0)].
\end{gather*}
\end{enumerate}
\end{Theorem}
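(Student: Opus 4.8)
The strategy is to replace the non-stationary integrand $Y_N$ by its locally stationary approximation $\tilde{Y}_u$ (after appropriate time rescaling), use the ergodic theorem on the stationary process, and control the error terms with the two bounds in \eqref{assumption:LS}. For part (a), I would substitute $\nu\mapsto Nt$ is not the right rescaling here; rather, note that the approximation $\norm{Y_N(\nu)-\tilde{Y}_\nu(N\nu)}_{L^p}\leq C/N$ holds uniformly, so that
\begin{gather*}
\norm{\frac{1}{t}\int_0^t Y_N(\nu)\,d\nu-\frac{1}{t}\int_0^t \tilde{Y}_\nu(N\nu)\,d\nu}_{L^p}\leq \frac{1}{t}\int_0^t \norm{Y_N(\nu)-\tilde{Y}_\nu(N\nu)}_{L^p}\,d\nu\leq \frac{C}{N}\to 0
\end{gather*}
by Minkowski's integral inequality. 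It then remains to show $\frac{1}{t}\int_0^t \tilde{Y}_\nu(N\nu)\,d\nu\to\int_0^t E[\tilde{Y}_u(0)]\,du$ in $L^p$. Here I would first freeze the subscript: using the Lipschitz-in-$u$ bound $\norm{\tilde{Y}_u(s)-\tilde{Y}_v(s)}_{L^p}\leq C|u-v|$, partition $[0,t]$ into small subintervals of length $\eta$ and replace $\tilde{Y}_\nu(N\nu)$ by $\tilde{Y}_{u_j}(N\nu)$ on each piece, incurring an error $O(\eta)$. On each fixed subinterval, $\frac{1}{N}\int \tilde{Y}_{u_j}(N\nu)\,d\nu = \frac{1}{N}\int_0^{N\cdot}\tilde{Y}_{u_j}(s)\,ds\cdot(\text{scaling})$, and the continuous-time ergodic (Birkhoff) theorem for the stationary ergodic process $\tilde{Y}_{u_j}$ gives a.s. and (by the uniform $L^p$-moment bound plus uniform integrability) $L^p$ convergence of the time average to $E[\tilde{Y}_{u_j}(0)]$; summing over $j$ and letting $\eta\to 0$ yields the Riemann integral $\int_0^t E[\tilde{Y}_u(0)]\,du$, using that $u\mapsto E[\tilde{Y}_u(0)]$ is Lipschitz (hence continuous and Riemann-integrable) again by \eqref{assumption:LS}.

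\textbf{Part (b).} The localized version follows the same template but with the kernel weight. By the same Minkowski estimate, the error from replacing $Y_N(\tau)$ by $\tilde{Y}_\tau(N\tau)$ on $[u-b_N,u+b_N]$ is bounded by $\frac{1}{b_N}\cdot\norm{K}_\infty\cdot 2b_N\cdot\frac{C}{N}=O(1/N)\to 0$. Next, since all $\tau$ in the window satisfy $|\tau-u|\leq b_N\to 0$, the Lipschitz-in-subscript bound gives $\norm{\tilde{Y}_\tau(N\tau)-\tilde{Y}_u(N\tau)}_{L^p}\leq C b_N\to 0$, so up to a vanishing $L^p$-error we may work with the single stationary process $\tilde{Y}_u$ evaluated at the rescaled points $N\tau$. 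It then remains to prove
\begin{gather*}
\frac{1}{b_N}\int_{u-b_N}^{u+b_N}K\!\left(\frac{\tau-u}{b_N}\right)\tilde{Y}_u(N\tau)\,d\tau \overset{L^p}{\longrightarrow} E[\tilde{Y}_u(0)],
\end{gather*}
where I used $\int_{-1}^1 K=1$. Substituting $x=(\tau-u)/b_N$ turns the left side into $\int_{-1}^1 K(x)\,\tilde{Y}_u(Nu+Nb_Nx)\,dx$. Because $Nb_N\to\infty$, the argument of $\tilde{Y}_u$ sweeps over a window of width $\to\infty$, so one expects the weighted average against the fixed density-like weight $K$ to converge to the mean; I would make this rigorous by writing the integral as a telescoping sum over unit-length blocks and invoking the continuous ergodic theorem together with Cesàro-type summation, or more efficiently by a second-moment (variance) computation: compute $\mathrm{Var}$ of the integral using the covariance decay supplied by $\theta$-weak dependence of $\tilde{Y}_u$ (and note that a continuous-time ergodic stationary process with the stated integrability already suffices for $L^2$; the general $L^p$ statement then follows by interpolation with the uniform moment bound, or by a direct $L^p$-ergodic argument). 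The differentiability of $K$ is used to perform an integration by parts when relating $\int K((\tau-u)/b_N)\tilde{Y}_u(N\tau)\,d\tau$ to the running integral $t\mapsto \int_0^t \tilde{Y}_u(s)\,ds$, to which the ergodic theorem applies directly — this is the analogue of Abel summation in the continuous setting.

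\textbf{Main obstacle.} The routine parts are the two Minkowski estimates and the subscript-freezing; these are immediate from \eqref{assumption:LS}. The genuine work is the ergodic-theorem step under rescaled time, i.e. establishing that $\frac{1}{b_N}\int_{u-b_N}^{u+b_N}K((\tau-u)/b_N)\tilde{Y}_u(N\tau)\,d\tau\to E[\tilde{Y}_u(0)]$ in $L^p$ when $Nb_N\to\infty$ but $b_N\to 0$: the integration window shrinks while the rescaled window grows, so one cannot apply Birkhoff's theorem verbatim and must pass through the running-integral representation and an Abel/integration-by-parts argument (this is exactly where differentiability of $K$ and bounded variation enter). Promoting the natural $L^2$ convergence to $L^p$ for general $p\geq 1$ is a secondary technical point, handled by combining the $p=2$ bound (or an $L^p$ ergodic theorem) with the uniform bound $\sup_u\norm{\tilde{Y}_u(0)}_{L^p}<\infty$ via uniform integrability; I would flag but not belabor it.
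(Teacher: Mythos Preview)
Your plan is essentially the paper's own proof: replace $Y_N$ by $\tilde{Y}_\nu(N\nu)$ via \eqref{assumption:LS}, freeze the subscript on a fine partition (the paper uses dyadic pieces of width $t/2^J$, which is your $\eta$-partition), apply the continuous-time Birkhoff--Khinchin theorem on each piece, and pass to the Riemann integral; for (b) the paper isolates the kernel-weighted ergodic step as a separate lemma and proves it exactly by the integration-by-parts reduction to the running integral $\frac{1}{T}\int_0^T\tilde{Y}_u(s)\,ds$ that you describe.

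Two small corrections. First, drop the alternative ``variance computation using $\theta$-weak dependence'' in (b): the theorem assumes only ergodicity of $\tilde{Y}_u$, not any mixing or $\theta$-coefficients, so that route is unavailable here (it is used in later discrete-observation results, not this one). Second, there is no need to ``promote $L^2$ to $L^p$'': the Birkhoff--Khinchin theorem already gives $L^p$ convergence of time averages whenever $\tilde{Y}_u(0)\in L^p$, so the $L^p$ statement comes for free once you have reduced to the running integral --- this is how the paper handles it, citing \cite[Theorem 9.8]{K1997}.
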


%
The following lemma is an ergodic type result for weighted integrals of ergodic processes.

\begin{Lemma}\label{lemma:stationarylocalergodic}
Let $X=\{X(t)\}_{t\in\R}$ be a stationary ergodic process, $\norm{X(0)}_{L^p}<\infty$ for some $p\geq1$ and $K$ a differentiable localizing kernel. Then, for all $u\in\R^+$
\begin{gather*}
\frac{1}{b_N}\int_{u-b_N}^{u+b_N} K\left(\frac{\tau-u}{b_N}\right)X(N\tau) d\tau \overset{L^p}{\underset{N\rightarrow\infty}{\longrightarrow}}E[X(0)],
\end{gather*}
where $b_N\rightarrow 0$ and $Nb_N\rightarrow \infty$ for $N\rightarrow\infty$.
\end{Lemma}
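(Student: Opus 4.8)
The plan is to reduce the weighted ergodic average to an unweighted one and then invoke the classical continuous-time ergodic theorem. First I would observe that, since $K$ has compact support $[-1,1]$ and total integral $1$, a natural change of variables $\tau = u + b_N x$ turns the integral into
\begin{gather*}
\frac{1}{b_N}\int_{u-b_N}^{u+b_N} K\!\left(\frac{\tau-u}{b_N}\right) X(N\tau)\,d\tau = \int_{-1}^{1} K(x)\, X\!\left(Nu + Nb_N x\right) dx.
\end{gather*}
Writing $c_N = Nb_N \to \infty$, the quantity of interest is $\int_{-1}^1 K(x) X(Nu + c_N x)\,dx$, and the target is $E[X(0)] = E[X(0)]\int_{-1}^1 K(x)\,dx$. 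So it suffices to show that $\int_{-1}^1 K(x)\bigl(X(Nu+c_Nx) - E[X(0)]\bigr)\,dx \to 0$ in $L^p$. Since $X$ is stationary, $Nu$ is just a shift and does not affect the $L^p$ norm of the centered integrand's behaviour, so I would absorb it (or simply carry it along; by stationarity one may as well take $u$ fixed and note the $L^p$-distance is shift-invariant).

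The key step is to pass from the weighted integral against $K$ to the unweighted (running) average $\frac{1}{2c_N}\int_{-c_N}^{c_N} X(Nu+s)\,ds$, for which the $L^p$ continuous-time ergodic theorem (von Neumann / Birkhoff, valid since $X$ is stationary ergodic and $X(0)\in L^p$) gives convergence to $E[X(0)]$. The bridge is summation by parts / integration by parts using that $K$ is differentiable and of bounded variation: define $S_N(x) = \int_{-1}^{x} \bigl(X(Nu+c_Ny) - E[X(0)]\bigr) c_N\,dy \cdot \frac{1}{c_N}$, i.e. let $R_N(x) := \int_{-1}^x \bigl(X(Nu + c_N y) - E[X(0)]\bigr)\,dy$. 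Then
\begin{gather*}
\int_{-1}^{1} K(x)\bigl(X(Nu+c_Nx)-E[X(0)]\bigr) dx = \bigl[K(x) R_N(x)\bigr]_{-1}^{1} - \int_{-1}^{1} K'(x) R_N(x)\,dx = -\int_{-1}^{1} K'(x) R_N(x)\,dx,
\end{gather*}
where the boundary term vanishes because $R_N(-1)=0$ and $K(1)=0$ ($K$ has support in $[-1,1]$; if $K(1)\neq 0$ one uses $R_N(1)$, handled by the same ergodic estimate). For each fixed $x\in(-1,1]$, the change of variables $s = c_N y$ gives $R_N(x) = \frac{1}{c_N}\int_{-c_N}^{c_N x} \bigl(X(Nu+s)-E[X(0)]\bigr)\,ds$, and the $L^p$ ergodic theorem yields $\norm{R_N(x)}_{L^p}\to 0$ for each such $x$ (after splitting the interval of integration at $0$ and rescaling). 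Moreover $\norm{R_N(x)}_{L^p} \leq \frac{1}{c_N}\int_{-c_N}^{c_N}\bigl(\norm{X(Nu+s)}_{L^p} + E|X(0)|\bigr)\,ds \leq 2\bigl(\norm{X(0)}_{L^p} + E|X(0)|\bigr)$ is bounded uniformly in $N$ and $x$. Since $K'$ is integrable on $[-1,1]$ ($K$ differentiable of bounded variation), dominated convergence (applied to the $x$-integral, with dominating function $2(\norm{X(0)}_{L^p}+E|X(0)|)\,|K'(x)|$) gives $\norm{\int_{-1}^{1} K'(x) R_N(x)\,dx}_{L^p} \leq \int_{-1}^{1} |K'(x)|\, \norm{R_N(x)}_{L^p}\,dx \to 0$, which is exactly the claim.

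The main obstacle is making the last interchange rigorous: one is applying a dominated-convergence argument in the Banach space $L^p(\Omega)$, so I would phrase it via Minkowski's integral inequality to pull the $L^p(\Omega)$ norm inside the $dx$-integral, and then apply ordinary dominated convergence to the scalar function $x\mapsto \norm{R_N(x)}_{L^p}$. The pointwise-in-$x$ convergence $\norm{R_N(x)}_{L^p}\to 0$ itself rests on the $L^p$ continuous-time mean ergodic theorem, which does require ergodicity of $X$ (available by hypothesis) and $X(0)\in L^p$; for $p>1$ this is the $L^p$ mean ergodic theorem, and for $p=1$ it still holds since $\frac{1}{T}\int_0^T X(s)\,ds \to E[X(0)]$ a.s.\ (Birkhoff) and in $L^1$ (uniform integrability of the averages following from $X(0)\in L^1$). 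A minor technical point, easily dispatched, is the two-sided nature of the integral in $R_N(x)$ for $x$ near $-1$ versus $x$ near $1$, which I would handle by splitting $\int_{-c_N}^{c_Nx} = \int_{-c_N}^{0} + \int_{0}^{c_N x}$ and applying the ergodic theorem to each piece after the obvious rescaling.
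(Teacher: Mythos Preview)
Your proof is correct and follows essentially the same route as the paper's: integration by parts to rewrite the weighted average in terms of the running averages $R_N(x)$, followed by the continuous-time $L^p$ ergodic theorem (Birkhoff--Khinchin), with stationarity absorbing the shift by $Nu$. The only cosmetic difference is in the finish: the paper bounds by $\sup_x\norm{R_N(x)}_{L^p}$ and drives this supremum to zero via an $\varepsilon$-splitting of the range $[0,2Nb_N]$, whereas you pull the $L^p$ norm inside via Minkowski and apply dominated convergence in $x$.
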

\begin{proof}
First, note that 
\begin{gather*}
\frac{1}{b_N}\int_{u-b_N}^{u+b_N} K\left(\frac{\tau-u}{b_N}\right) d\tau=\int_{-1}^1 K(x)dx=1.
\end{gather*}
Therefore, without loss of generality, we can assume that $Y$ is centered, i.e. $E[Y(0)]=0$.
Let us first assume that $u=0$. Now, using the integration by parts formula
\begin{align*}
\tilde{\mu}_N(0)&\!=\!\frac{1}{b_N}\int_{-b_N}^{b_N} K\left(\frac{\tau}{b_N}\right)X(N\tau) d\tau\\
&\!=\!\frac{1}{b_N}\! \left(\!K(1)\!\!\int_{-b_N}^{b_N}\!\!X(N\tau)d\tau \!-\!K(-1)\!\int_{-b_N}^{-b_N}\!\!X(N\tau)d\tau \!-\!\frac{1}{b_N} \int_{-b_N}^{b_N}\!K'\!\left(\frac{\tau}{b_N}\right) \int_{-b_N}^\tau\! \!\!X(Nu) du \ d\tau\!  \right).
\end{align*}
Since $K$ is of bounded variation, we get
\begin{gather*}
\norm{\tilde{\mu}_N(0)}_{L^p}\leq C_1 \sup_{\tau\in [-b_N,b_N]} \norm{ \frac{1}{b_N} \int_{-b_N}^\tau X(N u) du}_{L^p}=  C_1\sup_{\tau\in [0,2Nb_N]} \norm{ \frac{1}{Nb_N} \int_{0}^\tau X(u) du}_{L^p}
\end{gather*}
for a constant $C_1>0$. To lighten notation, we introduce $F(\tau)=\frac{1}{\tau}\int_0^{\tau}X(u)du$. From the continuous-time version of the Birkhoff-Khinchin theorem \cite[Theorem 9.8]{K1997} we get $F(\tau) \overset{L^p}{\underset{\tau\rightarrow\infty}{\longrightarrow}}E[X(0)]=0$, such that 
\begin{enumerate}[label={(\alph*)}]
\item $\norm{F(\tau)}_{L^p}\leq C_2$ for a constant $C_2>0$ for all $\tau\in\R^+$ and
\item for all $\varepsilon>0$ there exists $c>0$ such that $\norm{F(\tau)}_{L^p}\leq \varepsilon$ for all $\tau>c$. 
\end{enumerate}
Now, let $\frac{\varepsilon}{2C_1}>0$. Consider $c>0$ such that $\norm{F(\tau)}_{L^p}\leq \frac{\varepsilon}{2C_1}$ for all $\tau>c$. Then, for $N$ sufficiently large
\begin{align}
\begin{aligned}
\norm{\tilde{\mu}_N(0)}_{L^p}\leq& 2C_1 \sup_{\tau\in [0,2Nb_N]} \norm{ \frac{1}{2Nb_N} \int_{0}^\tau X(u) du}_{L^p}\\
\leq& 2C_1  \sup_{\tau\in [0,c]} \norm{ \frac{\tau}{2Nb_N} F(\tau)}_{L^p}+2C_1 \sup_{\tau\in [c,2Nb_N]} \norm{ \frac{\tau}{2Nb_N} F(\tau)}_{L^p}\\
\leq& \frac{cC_1}{Nb_N} \sup_{\tau\in [0,c]}\norm{F(\tau)}_{L^p}+ 2C_1\sup_{\tau\in [c,2Nb_N]} \norm{F(\tau)}_{L^p}
\leq\frac{cC_1C_2}{Nb_N}+\varepsilon\underset{N\rightarrow\infty}{\longrightarrow}\varepsilon\label{equation:convergencelemmaloc},
\end{aligned}
\end{align}
which concludes the proof for $u=0$. Since $Y$ is stationary, we obtain for $u\in\R^+$
\begin{align*}
\norm{\tilde{\mu}_N(u)}_{L^p}&=\left\lVert\frac{1}{b_N}\int_{u-b_N}^{u+b_N} K\left(\frac{\tau-u}{b_N}\right)X(N\tau) d\tau\right\rVert_{L^p}\!\! =\norm{\frac{1}{b_N}\int_{-b_N}^{b_N} K\left(\frac{\tau-u}{b_N}\right)X(N\tau+Nu) d\tau}_{L^p}\\
&=\norm{\frac{1}{b_N}\int_{-b_N}^{b_N} K\left(\frac{\tau}{b_N}\right)X(N\tau) d\tau}_{L^p} \underset{N\rightarrow\infty}{\longrightarrow}0
\end{align*}
by (\ref{equation:convergencelemmaloc}).
\end{proof}

\noindent\textit{Proof of Theorem \ref{theorem:lawoflargenumberscontobs}.}
\begin{enumerate}[label={(\alph*)}]
\item For $J,N\in\N$ and $j=1,\ldots,2^J$  we define the intervals 
\begin{gather*}
I_{J,j,N,t}= \left\{s: \frac{s}{N}\in \Big( \frac{t(j-1)}{2^J},\frac{tj}{2^J}\Big] \right\},\text{ such that }\bigcup_{j=1}^{2^J} I_{J,j,N,t} = (0,tN].
\end{gather*}
Notice that $\lambda(I_{J,j,N,t})=\frac{Nt}{2^J}$. By (\ref{assumption:LS}) we have 
\begin{gather*}
\left\lVert \frac{1}{t} \int_{0}^t (Y_N(\nu)-\tilde{Y}_{\nu}(N\nu))d\nu\right\rVert_{L^p}
\leq \sup_{s\in [0,t]} \left\lVert Y_N(s)-\tilde{Y}_{s}(Ns)\right\rVert_{L^p}\underset{N\rightarrow\infty}{\rightarrow}0.
\end{gather*}
Let us fix $J\in\N$. Then,
\begin{align*}
&\norm{\frac{1}{t} \int_{0}^t\tilde{Y}_{\nu}(N\nu)d\nu -\frac{1}{2^J} \sum_{j=1}^{2^J}\frac{1}{\lambda(I_{J,j,N,t})} \int_{I_{J,j,N,t}} \tilde{Y}_{\frac{jt}{2^J}}(u)du}_{L^p} \\
&=\norm{\frac{1}{Nt} \int_{0}^{Nt}\tilde{Y}_{\frac{\nu}{N}}(\nu)d\nu -\frac{1}{2^J} \sum_{j=1}^{2^J}\frac{1}{\lambda(I_{J,j,N,t})} \int_{I_{J,j,N,t}} \tilde{Y}_{\frac{jt}{2^J}}(u)du}_{L^p}\\
&=\norm{\frac{1}{2^J} \sum_{j=1}^{2^J}\frac{1}{\lambda(I_{J,j,N,t})}  \int_{I_{J,j,N,t}} \left( \tilde{Y}_{\frac{\nu}{N}}(\nu) - \tilde{Y}_{\frac{jt}{2^J}}(\nu)\right)d\nu}_{L^p}\\ 
&\leq\sup_{s\in \R_0^+} \sup_{|u-v|\leq\frac{t}{2^J}} \norm{\tilde{Y}_{u}(s) - \tilde{Y}_{v}(s) }_{L^p} \underset{J\rightarrow\infty}{\rightarrow}0
\end{align*}
by (\ref{assumption:LS}). For fixed $J\in\N$ we use the continuous-time version of the Birkhoff-Khinchin theorem \cite[Theorem 9.8]{K1997} to obtain
\begin{gather*}
E(J,N):=\frac{1}{2^J} \sum_{j=1}^{2^J}\frac{1}{\lambda(I_{J,j,N,t})} \int_{I_{J,j,N,t}} \tilde{Y}_{\frac{jt}{2^J}}(u)du \overset{L^p}{\underset{N\rightarrow\infty}{\longrightarrow}} \frac{1}{2^J} \sum_{j=1}^{2^J} E\left[ \tilde{Y}_{\frac{jt}{2^J}}(0) \right]=:E(J).
\end{gather*}
Furthermore, by the continuity of $u\mapsto E[\tilde{Y}_u(0)]$ (follows again from (\ref{assumption:LS})) we have
\begin{gather*}
E(J)= \frac{1}{2^J} \sum_{j=1}^{2^J} E\left[ \tilde{Y}_{\frac{jt}{2^J}}(0) \right]\underset{J\rightarrow\infty}{\longrightarrow}\int_{0}^{t} E[\tilde{Y}_u(0)]du=:E. 
\end{gather*}
Finally,
\begin{align*}
&\norm{\frac{1}{t} \int_{0}^t Y_N(\nu)d\nu -\int_{0}^{t} E[\tilde{Y}_u(0)]du}_{L^p}\\
&\leq\norm{ \frac{1}{t} \int_{0}^t (Y_N(\nu)\!-\!\tilde{Y}_{\nu}(N\nu))d\nu}_{L^p}
\!\!+\!\norm{\frac{1}{t} \int_{0}^t\tilde{Y}_{\nu}(N\nu)d\nu \!-\!\frac{1}{2^J} \sum_{j=1}^{2^J}\frac{1}{\lambda(I_{J,j,N,t})} \int_{I_{J,j,N,t}}\!\! \tilde{Y}_{\frac{jt}{2^J}}(u)du }_{L^p}\\
&\quad+\norm{ E(J,N)-E(J)}_{L^p}+\left|E(J)-E\right|\\
&\leq \sup_{s\in [0,t]} \norm{ Y_N(s)-\tilde{Y}_{s}(Ns)}_{L^p}+ \sup_{s\in [0,Nt]} \sup_{|u-v|\leq\frac{t}{2^J}} \norm{ \tilde{Y}_{u}(s) - \tilde{Y}_{v}(s) }_{L^p} +\norm{ E(J,N)-E(J)}_{L^p}\\&\quad+\left|E(J)-E\right|.
\end{align*}
Then, for all $J\in\N$
\begin{align*}
\limsup_{N\rightarrow\infty}\norm{\frac{1}{t} \int_{0}^t Y_N(\nu)d\nu \!-\!\int_{0}^{t} E[\tilde{Y}_u(0)]du}_{L^p}\!\!
&\leq\sup_{s\in \R_0^+}  \sup_{|u-v|\leq\frac{t}{2^J}} \norm{\tilde{Y}_{u}(s) \!-\! \tilde{Y}_{v}(s)}_{L^p}\!\!+\! \left|E(J)\!-\!E\right| \\
&\leq \sup_{|u-v|\leq\frac{t}{2^J}} C |u-v| + \left|E(J)-E\right|
\end{align*}
by (\ref{assumption:LS}), which converges to $0$ for $J\rightarrow\infty$.
\item It holds
\begin{align*}
&\norm{\frac{1}{b_N}\int_{u-b_N}^{u+b_N} K\left(\frac{\tau-u}{b_N}\right)Y_N(\tau) d\tau -E[\tilde{Y}_u(0)]}_{L^p}\\
&\leq \norm{\frac{1}{b_N}\int_{u-b_N}^{u+b_N} K\left(\frac{\tau-u}{b_N}\right)\left(Y_N(\tau)-\tilde{Y}_\tau(N\tau)\right) d\tau}_{L^p}\\
&\quad+\norm{\frac{1}{b_N}\int_{u-b_N}^{u+b_N} K\left(\frac{\tau-u}{b_N}\right)\left(\tilde{Y}_\tau(N\tau)-\tilde{Y}_u(N\tau)\right) d\tau}_{L^p}\\
&\quad+\norm{\frac{1}{b_N}\int_{u-b_N}^{u+b_N} K\left(\frac{\tau-u}{b_N}\right)\tilde{Y}_u(N\tau) d\tau-E[\tilde{Y}_u(0)]}_{L^p}=:P_1+P_2+P_3.
\end{align*}
Since $P_3$ converges to zero for $N\rightarrow\infty$ by Lemma \ref{lemma:stationarylocalergodic} it remains to analyze $P_1$ and $P_2$. Considering $P_1$ we get
\begin{gather*}
P_1\leq 2 \norm{K}_\infty  \sup_{\tau\in [u-b_N,u+b_N]} \norm{ Y_N(\tau)- \tilde{Y}_\tau(N\tau)}_{L^p}  \underset{N\rightarrow\infty}{\longrightarrow}0
\end{gather*}
by (\ref{assumption:LS}). For the second summand it holds
\begin{align*}
P_2\leq 2 \norm{K}_\infty  \sup_{\tau\in [u-b_N,u+b_N]}\sup_{|u-v|\leq 2b_N} \norm{ \tilde{Y}_u(N\tau)- \tilde{Y}_v(N\tau)}_{L^p}\underset{N\rightarrow\infty}{\longrightarrow}0,
\end{align*}
again by (\ref{assumption:LS}), since $b_N\rightarrow0$.
\end{enumerate}
\qed

\subsubsection{Laws of large numbers under discrete observations}
\label{sec3-1-2}

In this section we give a series of local law of large numbers under discrete observations as considered in Assumption \ref{assumption:observations}. Depending on whether \hyperref[observations:O1]{(O1)} or \hyperref[observations:O2]{(O2)} holds, we obtain different results.\\
We start by assuming that \hyperref[observations:O1]{(O1)} holds and give a discrete-time version of Lemma \ref{lemma:stationarylocalergodic}.

\begin{Lemma}\label{lemma:stationarylocalergodicdiscrete}
Let $X=\{X(t)\}_{t\in\R}$ be a stationary ergodic process and $\norm{X(0)}_{L^p}<\infty$ for some $p\geq1$. Additionally, assume that \ref{observations:O1} holds. Then, for all $u\in\R^+$
\begin{gather*}
\frac{\delta_N}{b_N} \sum_{i=-m_N}^{m_N} K\left(\frac{\tau_i^N-u}{b_N}\right)X(N\tau_i^N) \overset{L^p}{\underset{N\rightarrow\infty}{\longrightarrow}}E[X(0)].
\end{gather*}
\end{Lemma}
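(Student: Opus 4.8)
The plan is to mirror the structure of the proof of Lemma \ref{lemma:stationarylocalergodic}, replacing the integral by a Riemann-type sum and exploiting condition \ref{observations:O1}, i.e. $N\delta_N=\delta$, which makes the sampling points $N\tau_i^N=Nu+i\delta$ equidistant on the $\tilde Y_u$-timescale. First I would reduce to the centered case: since $\frac{\delta_N}{b_N}\sum_{i=-m_N}^{m_N}K\!\left(\frac{\tau_i^N-u}{b_N}\right)$ is a Riemann sum for $\int_{-1}^1K(x)\,dx=1$ (up to an error of order $\delta_N/b_N\to0$, controlled because $K$ is of bounded variation), we may subtract $E[X(0)]$ and assume $E[X(0)]=0$. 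By stationarity of $X$ we may also assume $u=0$, exactly as in the last display of the proof of Lemma \ref{lemma:stationarylocalergodic}.

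Next, with $u=0$, write $S_N=\frac{\delta_N}{b_N}\sum_{i=-m_N}^{m_N}K\!\left(\frac{i\delta_N}{b_N}\right)X(i\delta)$ and apply summation by parts (the Abel summation analog of the integration-by-parts step in Lemma \ref{lemma:stationarylocalergodic}), moving the variation onto the kernel. Since $K$ has bounded variation and compact support $[-1,1]$, this yields a bound of the form
\begin{gather*}
\norm{S_N}_{L^p}\leq C_1\sup_{-m_N\leq k\leq m_N}\norm{\frac{\delta_N}{b_N}\sum_{i=-m_N}^{k}X(i\delta)}_{L^p}=C_1\sup_{0\leq k\leq 2m_N}\norm{\frac{1}{k+1}\sum_{j=0}^{k}X(j\delta)}_{L^p}\cdot\frac{(k+1)\delta_N}{b_N}
\end{gather*}
for a constant $C_1>0$, where in the last step I shifted the summation index and used $2m_N\delta_N/b_N\to2$. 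Now the key input is that $(X(j\delta))_{j\in\Z}$ is a \emph{stationary ergodic} discrete-time process (a sampled ergodic process at a fixed lag is ergodic), so the classical $L^p$ Birkhoff ergodic theorem gives $\frac{1}{k+1}\sum_{j=0}^kX(j\delta)\to E[X(0)]=0$ in $L^p$. Writing $F(k)=\frac{1}{k+1}\sum_{j=0}^kX(j\delta)$, this provides the two properties used before: $\norm{F(k)}_{L^p}\leq C_2$ for all $k$, and for every $\varepsilon>0$ there is $c$ with $\norm{F(k)}_{L^p}\leq\varepsilon$ for $k>c$.

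The final step is the same two-regime splitting as in \eqref{equation:convergencelemmaloc}: split the supremum over $k\in[0,2m_N]$ into $k\leq c$ and $k>c$; on the first range $\frac{(k+1)\delta_N}{b_N}\leq\frac{(c+1)\delta_N}{b_N}\to0$ kills the (bounded) contribution, and on the second range $\norm{F(k)}_{L^p}\leq\varepsilon$ while $\frac{(k+1)\delta_N}{b_N}$ stays bounded by roughly $2$, giving $\limsup_N\norm{S_N}_{L^p}\leq C\varepsilon$; let $\varepsilon\downarrow0$. Undoing the reductions (re-adding $E[X(0)]$ and shifting back to general $u$ by stationarity) completes the proof. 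I expect the only genuine subtlety to be the summation-by-parts bookkeeping — making sure the boundary terms at $i=\pm m_N$ and the discrete "total variation" of $k\mapsto K(k\delta_N/b_N)$ are controlled uniformly by $\mathrm{Var}(K)$, which is where compact support and bounded variation of $K$ are essential; everything else is a routine discretization of the argument already given for Lemma \ref{lemma:stationarylocalergodic}.
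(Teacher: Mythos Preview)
Your proposal is correct and follows essentially the same route as the paper: reduce to the centered case via the Riemann-sum property of $\frac{\delta_N}{b_N}\sum K(\cdot)$, reduce to $u=0$ by stationarity, apply Abel summation by parts to move the variation onto $K$, and finish with the two-regime splitting of \eqref{equation:convergencelemmaloc} using the $L^p$ ergodic theorem for the sampled sequence $(X(j\delta))_{j\in\Z}$. The only point worth flagging is your parenthetical ``a sampled ergodic process at a fixed lag is ergodic'': this is not automatic for arbitrary continuous-time ergodic processes, and the paper glosses over it as well; in the applications of the paper (L\'evy-driven moving averages, which are mixing) it is unproblematic, but strictly speaking it is an implicit extra hypothesis.
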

\begin{proof}
We first note that, since \ref{observations:O1} holds, we have $N\tau_i^N=Nu+i\delta$ for some $\delta>0$. In principle the proof is a discrete time version of the proof of Lemma \ref{lemma:stationarylocalergodic}. We give the most important steps. Since
\begin{gather*}
\lim_{N\rightarrow\infty }\frac{\delta_N}{b_N} \sum_{i=-m_N}^{m_N} K\left(\frac{\tau_i^N-u}{b_N}\right)=\int_{-1}^1 K(x)dx=1,
\end{gather*}
we can assume without loss of generality that $X$ is centered. For $u=0$ and $S_{k}=\sum_{i=-m_N}^{k}X(i\delta)$ we obtain from the summation by parts formula 
\begin{align*}
\hat{\mu}_N(0)&:=\frac{\delta_N}{b_N} \sum_{i=-m_N}^{m_N} K\left(\frac{\tau_i^N}{b_N}\right)X(i\delta)\\
&=\frac{\delta_N}{b_N}\left(S_{m_N} K\left(\frac{\tau_{m_N}^N}{b_N}\right)+\sum_{k=-m_N}^{m_N-1}S_k \left(K\left(\frac{\tau_k^N}{b_N}\right)-K\left(\frac{\tau_{k+1}^N}{b_N}\right)\right)\right).
\end{align*}
Since $K$ is of bounded variation, we obtain
\begin{align*}
\norm{\hat{\mu}_N(0)}_{L^p}&\leq C_1\frac{\delta_N}{b_N} \sup_{k\in\{-m_N,\ldots,m_N\}} \norm{S_k}_{L^p}\\
&=C_1 \frac{(2m_N+1)\delta_N}{b_N} \sup_{k\in\{0,\ldots,2m_N\}} \norm{\frac{1}{2m_N+1}\sum_{i=0}^{k}X(i\delta)}_{L^p} \underset{N\rightarrow\infty}{\longrightarrow}0
\end{align*}
for a constant $C_1>0$ by similar arguments as in (\ref{equation:convergencelemmaloc}). For general $u\in\R^+$, the result follows from the stationarity of $X$.
\end{proof}

\begin{Theorem}\label{theorem:lawoflargenumbersO1}
Consider a sequence of stochastic processes $Y_N$ with locally stationary approximation $\tilde{Y}_u$ for some $p\geq1$. For observations as given in Assumption \ref{assumption:observations} such that \hyperref[observations:O1]{(O1)} holds, i.e. $N\delta_N=\delta$ for some $\delta>0$, we obtain for all $u\in\R^+$
\begin{gather*}
 \frac{\delta_N}{b_N} \sum_{i=-m_N}^{m_N}K\left(\frac{\tau_i^N-u}{b_N} \right)Y_N(\tau_i^N)\overset{L^p}{\underset{N\rightarrow\infty}{\longrightarrow}}E[\tilde{Y}_u(0)].
\end{gather*}
\end{Theorem}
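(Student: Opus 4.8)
The plan is to mimic the proof of Theorem~\ref{theorem:lawoflargenumberscontobs}(b), but now in the discrete-observation setting, using Lemma~\ref{lemma:stationarylocalergodicdiscrete} in place of Lemma~\ref{lemma:stationarylocalergodic}. First I would introduce the telescoping decomposition
\begin{align*}
&\norm{\frac{\delta_N}{b_N} \sum_{i=-m_N}^{m_N}K\left(\frac{\tau_i^N-u}{b_N} \right)Y_N(\tau_i^N)-E[\tilde{Y}_u(0)]}_{L^p}\\
&\leq \norm{\frac{\delta_N}{b_N} \sum_{i=-m_N}^{m_N}K\left(\frac{\tau_i^N-u}{b_N} \right)\left(Y_N(\tau_i^N)-\tilde{Y}_{\tau_i^N}(N\tau_i^N)\right)}_{L^p}\\
&\quad+\norm{\frac{\delta_N}{b_N} \sum_{i=-m_N}^{m_N}K\left(\frac{\tau_i^N-u}{b_N} \right)\left(\tilde{Y}_{\tau_i^N}(N\tau_i^N)-\tilde{Y}_{u}(N\tau_i^N)\right)}_{L^p}\\
&\quad+\norm{\frac{\delta_N}{b_N} \sum_{i=-m_N}^{m_N}K\left(\frac{\tau_i^N-u}{b_N} \right)\tilde{Y}_{u}(N\tau_i^N)-E[\tilde{Y}_u(0)]}_{L^p}=:Q_1+Q_2+Q_3,
\end{align*}
where the key observation is that the $\tilde{Y}_u$ appearing here is exactly the stationary ergodic process to which Lemma~\ref{lemma:stationarylocalergodicdiscrete} applies, so $Q_3\to 0$ as $N\to\infty$ immediately.

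For $Q_1$ and $Q_2$ I would use the crude bound $\frac{\delta_N}{b_N}\sum_{i=-m_N}^{m_N}\abs{K\left(\frac{\tau_i^N-u}{b_N}\right)} \leq C_K$ uniformly in $N$ (which holds because $K$ is bounded with support $[-1,1]$ and $\frac{(2m_N+1)\delta_N}{b_N}\to 2$, so this is a Riemann-sum-type quantity bounded by, say, $2\norm{K}_\infty + o(1)$), together with the triangle inequality inside the $L^p$-norm. This gives
\begin{gather*}
Q_1 \leq C_K \sup_{i\in\{-m_N,\ldots,m_N\}} \norm{Y_N(\tau_i^N)-\tilde{Y}_{\tau_i^N}(N\tau_i^N)}_{L^p} \leq \frac{C_K C}{N} \underset{N\to\infty}{\longrightarrow} 0
\end{gather*}
by the second bound in~(\ref{assumption:LS}), and similarly, since $\abs{\tau_i^N-u}\leq b_N$ for all $i\in\{-m_N,\ldots,m_N\}$,
\begin{gather*}
Q_2 \leq C_K \sup_{\abs{v-u}\leq b_N} \sup_{t\in\R}\norm{\tilde{Y}_{v}(t)-\tilde{Y}_{u}(t)}_{L^p} \leq C_K C\, b_N \underset{N\to\infty}{\longrightarrow} 0
\end{gather*}
by the first bound in~(\ref{assumption:LS}) together with $b_N\downarrow 0$ and stationarity of the $\tilde Y$.

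The one point requiring a little care — and the main (modest) obstacle — is justifying the uniform bound on $\frac{\delta_N}{b_N}\sum_{i=-m_N}^{m_N}\abs{K\left(\frac{\tau_i^N-u}{b_N}\right)}$: this is not literally a telescoping identity as in the $Q_3$ estimate of Lemma~\ref{lemma:stationarylocalergodicdiscrete}, but rather a Riemann-sum approximation of $\int_{-1}^1\abs{K(x)}\,dx$, so one should invoke that $K$ is of bounded variation (hence Riemann integrable) and that $\delta_N/b_N\to 0$ with $\frac{2m_N\delta_N}{b_N}\to 2$ to conclude the sum is $O(1)$. Everything else is a direct transcription of the continuous-observation argument, with sums replacing integrals and Lemma~\ref{lemma:stationarylocalergodicdiscrete} replacing Lemma~\ref{lemma:stationarylocalergodic}. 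Combining $Q_1+Q_2+Q_3\to 0$ completes the proof.
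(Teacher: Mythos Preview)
Your proposal is correct and follows essentially the same approach as the paper: the same three-term decomposition, the same use of (\ref{assumption:LS}) to bound the first two terms via $\frac{\delta_N(2m_N+1)}{b_N}\norm{K}_\infty(\frac{1}{N}+b_N)\to 0$, and the same appeal to Lemma~\ref{lemma:stationarylocalergodicdiscrete} for the third term. The paper is slightly terser in bounding $Q_1+Q_2$ jointly, but the arguments are identical in substance.
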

\begin{proof}
Consider the decomposition
\begin{align*}
&\norm{\frac{\delta_N}{b_N} \sum_{i=-m_N}^{m_N}K\left(\frac{\tau_i^N-u}{b_N} \right)Y_N(\tau_i^N)-E[\tilde{Y}_u(0)]}_{L^p}\\
&\leq \norm{\frac{\delta_N}{b_N} \sum_{i=-m_N}^{m_N}K\left(\frac{\tau_i^N-u}{b_N} \right)Y_N(\tau_i^N)-\frac{\delta_N}{b_N} \sum_{i=-m_N}^{m_N}K\left(\frac{\tau_i^N-u}{b_N} \right)\tilde{Y}_{\tau_i^N}(N\tau_i^N)}_{L^p}\\
&\quad+ \norm{\frac{\delta_N}{b_N} \sum_{i=-m_N}^{m_N}K\left(\frac{\tau_i^N-u}{b_N} \right)\tilde{Y}_{\tau_i^N}(N\tau_i^N)-\frac{\delta_N}{b_N} \sum_{i=-m_N}^{m_N}K\left(\frac{\tau_i^N-u}{b_N} \right)\tilde{Y}_{u}(N\tau_i^N)}_{L^p}\\
&\quad+ \norm{\frac{\delta_N}{b_N} \sum_{i=-m_N}^{m_N}K\left(\frac{\tau_i^N-u}{b_N} \right)\tilde{Y}_{u}(N\tau_i^N)-E[\tilde{Y}_u(0)]}_{L^p}=:P_1+P_2+P_3.
\end{align*}
For $P_1$ and $P_2$ we obtain 
\begin{gather*}
P_1+P_2
\leq \frac{C\norm{K}_\infty\delta_N(2m_N+1)}{b_N} \left(\frac{1}{N}+b_N\right) \underset{N\rightarrow\infty}{\longrightarrow}0
\end{gather*}
for some constant $C>0$. Finally, due to Lemma \ref{lemma:stationarylocalergodicdiscrete}, we also have $P_3\rightarrow0$, as $N\rightarrow\infty$.
\end{proof}

If \ref{observations:O2} holds, we have to impose conditions on the dependence structure of the locally stationary approximation $\tilde{Y}_u$ to obtain similar results as in Theorem \ref{theorem:lawoflargenumbersO1} for $p=1$. Under additional moment conditions a rough inequality extends the result to $p=2$.

\begin{Theorem}\label{theorem:lawoflargenumbersdiscreteobs}
Consider a sequence of stochastic processes $Y_N$ with locally stationary approximation $\tilde{Y}_u$ for some $p\geq1$. For $u\in\R^+$ fixed, assume that $\tilde{Y}_u$ is $\theta$-weakly dependent. Then, for observations as considered in Assumption \ref{assumption:observations} such that \ref{observations:O2} holds, we obtain the following convergences:
\begin{enumerate}[label={(\alph*)}]
\item It holds
\begin{gather*}
\frac{\delta_N}{b_N} \sum_{i=-m_N}^{m_N}K\left(\frac{\tau_i^N-u}{b_N} \right)Y_N(\tau_i^N) \overset{L^1}{\underset{N\rightarrow\infty}{\longrightarrow}}E[\tilde{Y}_u(0)].
\end{gather*}
\item Assume that $\tilde{Y}_u$ is a locally stationary approximation of $Y_N$ for some $p\geq2$ and that $E[|\tilde{Y}_u(0)|^{2+\varepsilon}]<\infty$, for some $\varepsilon>0$, then for all $u\in\R^+$
\begin{gather*}
\frac{\delta_N}{b_N} \sum_{i=-m_N}^{m_N}K\left(\frac{\tau_i^N-u}{b_N} \right)Y_N(\tau_i^N)\overset{L^2}{\underset{N\rightarrow\infty}{\longrightarrow}}E[\tilde{Y}_u(0)].
\end{gather*}
\end{enumerate}
\end{Theorem}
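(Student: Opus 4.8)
The plan is to mimic the decomposition used in the proof of Theorem \ref{theorem:lawoflargenumbersO1}, splitting the localized sample mean of $Y_N$ into a replacement error and a term involving only the stationary approximation $\tilde Y_u$ on the rescaled grid. Concretely, write
\begin{align*}
\norm{\frac{\delta_N}{b_N} \sum_{i=-m_N}^{m_N}K\!\left(\tfrac{\tau_i^N-u}{b_N}\right)Y_N(\tau_i^N)-E[\tilde Y_u(0)]}_{L^p}\leq P_1+P_2+P_3,
\end{align*}
where $P_1$ replaces $Y_N(\tau_i^N)$ by $\tilde Y_{\tau_i^N}(N\tau_i^N)$, $P_2$ replaces $\tilde Y_{\tau_i^N}$ by $\tilde Y_u$, and $P_3=\norm{\frac{\delta_N}{b_N} \sum_{i=-m_N}^{m_N}K(\tfrac{\tau_i^N-u}{b_N})\tilde Y_u(N\tau_i^N)-E[\tilde Y_u(0)]}_{L^p}$. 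Exactly as in Theorem \ref{theorem:lawoflargenumbersO1}, the bound \eqref{assumption:LS} gives $P_1+P_2\leq \frac{C\norm{K}_\infty \delta_N(2m_N+1)}{b_N}\left(\frac1N+b_N\right)\to 0$ (using $\delta_N(2m_N+1)/b_N\to 2$, $b_N\to 0$, and $Nb_N\to\infty$). So the whole difficulty is in controlling $P_3$, the fluctuation of a weighted average of the stationary process observed at points $N\tau_i^N=Nu+iN\delta_N$ whose spacing $N\delta_N\to\infty$ under \ref{observations:O2}.

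For part (a) ($p=1$), I would handle $P_3$ by a bias-variance style argument adapted to $L^1$. Since $\frac{\delta_N}{b_N}\sum_i K(\tfrac{\tau_i^N-u}{b_N})\to 1$, it suffices to bound $\norm{\frac{\delta_N}{b_N}\sum_i K(\tfrac{\tau_i^N-u}{b_N})(\tilde Y_u(N\tau_i^N)-E[\tilde Y_u(0)])}_{L^1}$. I would set $w_i^N=\frac{\delta_N}{b_N}K(\tfrac{\tau_i^N-u}{b_N})$, note $\sum_i|w_i^N|\le 2\norm K_\infty+o(1)$ and $\max_i|w_i^N|=O(\delta_N/b_N)\to 0$, and write the centered sum as $\sum_i w_i^N \bar X_i$ with $\bar X_i=\tilde Y_u(N\tau_i^N)-E[\tilde Y_u(0)]$. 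The natural route is to pass through the $L^2$ norm: $\norm{\sum_i w_i^N\bar X_i}_{L^1}^2\le \norm{\sum_i w_i^N\bar X_i}_{L^2}^2=\sum_{i,j}w_i^N w_j^N \operatorname{Cov}(\bar X_i,\bar X_j)$. By stationarity this equals $\sum_{i,j} w_i^N w_j^N \gamma_{X}(N\delta_N|i-j|)$ where $\gamma_X(\cdot)$ is the autocovariance of $\tilde Y_u$ — but here I must be careful: $L^1$-boundedness alone does not give finite variance, so strictly for part (a) I would instead invoke Lemma \ref{lemma:mixingaleandthetaweak}(a)–(b): $\theta$-weak dependence with $\theta(h)\to 0$ gives, via the mixingale coefficient $\gamma_h\le\theta_h=\theta(h)$, a direct $L^1$ bound $\norm{E[\bar X_j\mid \sigma(\bar X_k,k\le i)]}_{L^1}\le\theta(N\delta_N(j-i))$ for $j>i$. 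Summation by parts in $i$ against the bounded-variation weights $w_i^N$, together with the continuous-time/discrete ergodic control of partial sums $\frac{1}{2m_N+1}\sum_{i\le k}\bar X_i$ (from the Birkhoff–Khinchin theorem, as in Lemmas \ref{lemma:stationarylocalergodic} and \ref{lemma:stationarylocalergodicdiscrete}), should yield $P_3\to 0$; the key new input is that the grid spacing $N\delta_N\to\infty$ forces the relevant $\theta$-coefficients $\theta(N\delta_N)\to 0$, killing cross-terms.

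For part (b) ($p=2$), the moment assumption $E[|\tilde Y_u(0)|^{2+\varepsilon}]<\infty$ makes the variance genuinely finite, so I would bound $P_3^2\le \operatorname{Var}\!\big(\sum_i w_i^N\bar X_i\big)=\sum_{i,j}w_i^N w_j^N\gamma_X(N\delta_N|i-j|)\le \big(\sum_i|w_i^N|\big)\max_i|w_i^N|\sum_{\ell\in\Z}|\gamma_X(N\delta_N|\ell|)|$, and then use that $\theta$-weak dependence plus $2+\varepsilon$ moments gives summability of the autocovariances — indeed Lemma \ref{lemma:mixingaleandthetaweak}(c) gives $\sum_{k\ge1}\norm{X_0 E[X_k\mid\mathscr M_0]}_{L^1}<\infty$ under $\sum_h(h+1)^{1/\varepsilon}\theta(h)<\infty$; without the latter summability assumption I would instead argue that $\sum_{\ell}|\gamma_X(N\delta_N\ell)|\to 0$ because as $N\to\infty$ each term $\gamma_X(N\delta_N\ell)\to 0$ and the tails are dominated using Ibragimov-type covariance bounds ($|\gamma_X(h)|\le c\,\theta(h)^{\varepsilon/(1+\varepsilon)}$ from $\theta$-weak dependence and the moment bound). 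Since $\sum_i|w_i^N|$ is bounded and $\max_i|w_i^N|=O(\delta_N/b_N)\to 0$, the product tends to $0$, giving $P_3\to 0$ in $L^2$; combined with the (stronger, since $p\ge2$) $L^2$ bound on $P_1+P_2$ this finishes the proof. The main obstacle I anticipate is the rigorous treatment of $P_3$ in part (a), where finite variance is not available and one must exploit the $L^1$-mixingale structure and the ergodic partial-sum control simultaneously — this is essentially a discrete-grid analogue of Lemma \ref{lemma:stationarylocalergodic} but with diverging spacing, so the dominant convergence mechanism is the decay of $\theta(N\delta_N)$ rather than an ergodic average along a fixed-spacing grid. \qed
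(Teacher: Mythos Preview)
Your decomposition $P_1+P_2+P_3$ and the treatment of $P_1+P_2$ match the paper exactly. The issue is with $P_3$ in part (a).

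Under \ref{observations:O2} the grid spacing $N\delta_N$ varies with $N$ and tends to infinity, so $\{\tilde Y_u(N\tau_i^N)\}_{i}=\{\tilde Y_u(Nu+iN\delta_N)\}_i$ is a genuine triangular array, not a single stationary sequence observed along a fixed lattice. Consequently the Birkhoff--Khinchin argument that drives Lemmas \ref{lemma:stationarylocalergodic} and \ref{lemma:stationarylocalergodicdiscrete} is not available here: there is no fixed measure-preserving shift to which the ergodic theorem applies, and your ``summation by parts $+$ ergodic control of $\frac{1}{2m_N+1}\sum_{i\le k}\bar X_i$'' step has no backing. Your remark that ``$\theta(N\delta_N)\to 0$ kills cross-terms'' is $L^2$ intuition and does not by itself produce an $L^1$ bound when second moments are not assumed. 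What the paper does instead is exactly the piece you are missing: it verifies that the weighted array $X_{i,2m_N+1}$ is a uniformly integrable $L^1$-mixingale with respect to the natural filtration, with mixingale coefficients $\norm{E[X_{i,2m_N+1}\mid\mathcal M_{i-m,2m_N+1}]}_{L^1}\le C\theta(N\delta_N m)\le C\theta(C_1 m)\to 0$ uniformly in $N$ (via Lemma \ref{lemma:mixingaleandthetaweak}(a),(b)), and then invokes Andrews' weak law of large numbers for $L^1$-mixingale arrays \cite[Theorem 2]{A1988}. That external LLN is the missing ingredient; you have identified the mixingale bound but not the theorem that converts it into $P_3\to 0$ in $L^1$.

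For part (b) your idea is right in spirit, but the displayed inequality $\sum_{i,j}w_i^N w_j^N\gamma_X(N\delta_N|i-j|)\le(\sum_i|w_i^N|)\max_i|w_i^N|\sum_{\ell}|\gamma_X(N\delta_N|\ell|)|$ requires $\sum_\ell|\gamma_X(N\delta_N\ell)|$ to stay bounded, which in turn needs a summability condition on $\theta$ that the theorem does not assume; your dominated-convergence patch has the same problem. The paper avoids this by the cruder but sufficient splitting into diagonal and off-diagonal parts: the diagonal contributes $\frac{\delta_N^2(2m_N+1)}{b_N^2}E[\tilde Y_u(0)^2]\to 0$, and the off-diagonal part is bounded by $\frac{\delta_N^2(2m_N+1)^2}{b_N^2}\sup_{i\neq j}|\operatorname{Cov}(\tilde Y_u(0),\tilde Y_u(N\delta_N|i-j|))|$, where the prefactor tends to $4$ and the supremum is at most $9E[|\tilde Y_u(0)|^{2+\varepsilon}]^{1/(1+\varepsilon)}\theta(N\delta_N)^{\varepsilon/(1+\varepsilon)}\to 0$ by the covariance inequality for $\theta$-weakly dependent variables and monotonicity of $\theta$. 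This needs only $\theta(h)\to 0$, not summability.
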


\begin{proof}
\begin{enumerate}[label={(\alph*)}]
\item Consider the same decomposition as in the proof of Theorem \ref{theorem:lawoflargenumbersO1} for $p=1$. In the same way we have $P_1\rightarrow0$ and $P_2\rightarrow0$ as $N\rightarrow\infty$. 
To show that $P_3\underset{N\rightarrow\infty}{\longrightarrow}0$, we apply \cite[Theorem 2]{A1988}. Since
\begin{gather*}
\frac{\delta_N}{b_N} \sum_{i=-m_N}^{m_N} K\left(\frac{\tau_i^N-u}{b_N}\right)\underset{N\rightarrow\infty}{\longrightarrow}\int_{-1}^1 K(x)dx=1,
\end{gather*}
we can assume without loss of generality that $\tilde{Y}_u$ is centered. Consider the centered triangular array $X_{i,2m_N+1}$ and the corresponding filtration $\mathcal{M}_{i,2m_N+1}$, respectively, defined as
\begin{align*}
W_{i,2m_N+1}&=\tilde{Y}_{u}\left(N(u+(i-m_N-1)\delta_N)\right)\\
X_{i,2m_N+1}&=\frac{(2m_N+1)\delta_N}{b_N}K\left(\frac{\tau_{i-m_N-1}^N-u}{b_N} \right)W_{i,2m_N+1}\text{ and}\\
\mathcal{M}_{i,2m_N+1}&:=\sigma\left(\left\{\tilde{Y}_u(N(u+(j-m_N+1)\delta_N)), j\leq i, j\in\Z\right\}\right),\quad i\in\Z,
\end{align*}
where the family $\{\mathcal{M}_{i,2m_N+1}\}_{i=1,\ldots,2m_N+1}$ is non-decreasing.\\
Let us note that the family $\{X_{i,2m_N+1}\}_{ i=1,\ldots,2m_N+1, N\in\N}$ is uniformly integrable due to the stationarity of $\tilde{Y}_u$. Moreover, since $\tilde{Y}_u$ is $\theta$-weakly dependent with $\theta$-coefficients $\theta(h)$, also the sampled process $\{W_{i,2m_N+1}\!\}_{i\in\Z}$ is $\theta$-weakly dependent with $\theta$-coefficients $\theta^{(2m_N+1)}(h)\!=\!\theta(N\delta_Nh)$. Now, because $N\delta_N\rightarrow\infty$ as $N\rightarrow\infty$ and $\theta(h)$ is non-increasing, we have
\begin{gather}\label{equation:suptheta}
\max_{N\in\N}\theta^{(2m_N+1)}(h)= \theta(C_1h)
\end{gather} 
for the constant $C_1=\min_{N\in\N}N\delta_N>0$. In view of Definition \ref{definition:L1measuresofdependence}
and Lemma \ref{lemma:mixingaleandthetaweak} we obtain from the stationarity of $\tilde{Y}_u$ for all $m\in\N_0$
\begin{align*}
&\norm{E[X_{i,2m_N+1}|\mathcal{M}_{(i-m),2m_N+1}]}_{L^1}=\gamma(\mathcal{M}_{(i-m),2m_N+1},X_{i,2m_N+1})\\
&\leq C_2\gamma(\mathcal{M}_{0,2m_N+1},W_{m,2m_N+1}) \leq C_2\theta(\mathcal{M}_{0,2m_N+1},W_{m,2m_N+1})=C_2\theta^{2m_N+1}(m)\\
&\leq C_2\theta(N\delta_Nm)\leq C_2\theta(C_1m)\underset{m\rightarrow\infty}{\longrightarrow}0
\end{align*}
for some constant $C_2>0$, where we used (\ref{equation:suptheta}). 
Overall, we have that the triangular array $X_{i,2m_N+1}$ is an $L^1$-mixingale with respect to the filtration $\mathcal{M}_{i,2m_N+1}$ in the sense of \cite[Definition 2]{A1988}. An application of \cite[Theorem 2]{A1988} gives $P_3\underset{N\rightarrow\infty}{\longrightarrow}0$.
\item Consider the same decomposition as in the proof of Theorem \ref{theorem:lawoflargenumbersO1} for $p=2$. In the same way we have $P_1\rightarrow0$ and $P_2\rightarrow0$ as $N\rightarrow\infty$. Again, without loss of generality we assume that $\tilde{Y}_u$ is centered. For $P_3$ we obtain
\begin{align*}
&\Big\lVert \frac{\delta_N}{b_N} \sum_{i=-m_N}^{m_N}K\left(\frac{\tau_i^N-u}{b_N} \right)\tilde{Y}_{u}(N\tau_i^N)\Big\rVert^2
=\frac{\delta_N^2}{b_N^2}\bigg(\sum_{i=-m_N}^{m_N}Var\left(K\left(\frac{\tau_i^N-u}{b_N} \right)\tilde{Y}_{u}(N\tau_i^N)\right)\\
&\quad+\sum_{i\neq j}Cov\bigg(K\bigg(\frac{\tau_i^N-u}{b_N} \bigg)\tilde{Y}_{u}(N\tau_i^N),K\bigg(\frac{\tau_j^N-u}{b_N} \bigg)\tilde{Y}_{u}(N\tau_j^N)\bigg) \bigg)\\
&\leq \frac{\norm{K}_\infty^2\delta_N^2 (2m_N\!+\!1)^2}{b_N^2}\bigg(\!\frac{E[\tilde{Y}_u(0)^2]}{2m_N+1}\!+\!\sup_{i\neq j} \abs{Cov\!\left(\!\tilde{Y}_{u}(0),\tilde{Y}_{u}(N\delta_N|j\!-\!i|)\!\right)}\bigg)\\
&\leq\frac{\norm{K}_\infty^2\delta_N^2 (2m_N\!+\!1)^2}{b_N^2}\bigg(\!\frac{E[\tilde{Y}_u(0)^2]}{2m_N+1}\!+\!
9E[|\tilde{Y}_{u}(0)|^{2+\varepsilon}]^{\frac{1}{1+\varepsilon}}\theta(N\delta_N)^{\frac{\varepsilon}{1+\varepsilon}} \bigg),
\end{align*}
which converges to zero as $N\rightarrow\infty$, since $\theta(h)\downarrow0$ as $h\rightarrow\infty$ and $i\neq j$. The last inequality follows from \cite[Remark 3.3]{CS2018}. For the convergence we use the results on the $\theta$-coefficients of part (a).
\end{enumerate}
\end{proof}

\subsection{Central limit type results}
\label{sec3-2}
The following theorem gives a central limit type result for sequences of non-stationary processes that possess a locally stationary approximation in the sense of Definition \ref{definition:statapproxconttime}. We again consider the previous observation schemes and restrict ourselves to the rectangular kernel 
\begin{gather}\label{equation:rectangularkernel}
K_{rect}(x)=\frac{1}{2}\mathbb{1}_{\{x\in[-1,1]\}}
\end{gather}
as localization kernel. It is easy to see that $K_{rect}$ is a localizing kernel as defined in Definition \ref{definition:localizingkernel}. Depending on whether \hyperref[observations:O1]{(O1)} or \hyperref[observations:O2]{(O2)} holds, we obtain different asymptotic variances.\\
In addition, we assume the $\theta$-coefficients $\theta(h)$ of the locally stationary approximation to satisfy for a fixed $\varepsilon>0$ the condition
\begin{align*}
\hypertarget{DD}{DD(\varepsilon):} \qquad\sum_{h=1}^\infty\theta(h)h^{\frac{1}{\varepsilon}}<\infty.
\end{align*}
Sufficient conditions for \hyperlink{DD}{DD($\varepsilon$)} to hold are for instance $\theta(h)\in\mathcal{O}(h^{-\alpha})$ or $\theta(h)\in\mathcal{O}(h\ln(h)^{-\alpha})$ for some $\alpha>(1+\frac{1}{\varepsilon})$.

\begin{Theorem}\label{theorem:centrallimit}
Consider a sequence of stochastic processes $Y_N$ with centered locally stationary approximation $\tilde{Y}_u$ for some $p\geq2$ such that $\tilde{Y}_u(0)\in L^{2+\varepsilon}$ for some $\varepsilon>0$. Additionally, assume that $\tilde{Y}_u$ is $\theta$-weakly dependent with $\theta$-coefficients $\theta(h)$ satisfying \hyperlink{DD}{DD($\varepsilon$)}.
Assume that we observe $Y_N$ as described in Assumption \ref{assumption:observations}, such that additionally $\sqrt{m_N}b_N\rightarrow0$. Now, if either 
\begin{enumerate}[label={(\alph*)}]
\item \hyperref[observations:O1]{(O1)} holds, i.e. $N\delta_N=\delta>0$ and $\sigma(u)^2=\frac{1}{2}E[\tilde{Y}_u(0)^2]+\sum_{k=1}^\infty E[\tilde{Y}_u(0)\tilde{Y}_u(k\delta)]>0$ or
\item \hyperref[observations:O2]{(O2)} holds and $\sigma(u)^2=\frac{1}{2}E[\tilde{Y}_u(0)^2]>0$,
\end{enumerate}
then $\sigma(u)^2<\infty$ and 
\begin{align}\label{equation:centrallimitpartialsum}
\sqrt{\frac{\delta_N}{b_N}} \sum_{i=-m_N}^{m_N}K_{rect}\left(\frac{\tau_i^N-u}{b_N}\right)Y_N(\tau_i^N)\overset{d}{\underset{N\rightarrow\infty}{\longrightarrow}} \sigma(u) Z,
\end{align}
where $Z$ is a standard normally distributed random variable and $K_{rect}$ the rectangular kernel as defined in (\ref{equation:rectangularkernel}).
\end{Theorem}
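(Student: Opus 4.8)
The plan is to combine the three–term decomposition already used in the proofs of Theorems \ref{theorem:lawoflargenumbersO1} and \ref{theorem:lawoflargenumbersdiscreteobs} with the triangular–array central limit theorem of \cite{DM2002}.

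\emph{Step 1 (reduction to the stationary approximation).} Since $K_{rect}\equiv\tfrac12$ on $[u-b_N,u+b_N]$ and $\tau_i^N\in[u-b_N,u+b_N]$ for $|i|\le m_N$, I would write $\sqrt{\delta_N/b_N}\sum_{i=-m_N}^{m_N}K_{rect}((\tau_i^N-u)/b_N)Y_N(\tau_i^N)=A_N+B_N+S_N$, where $A_N$ replaces each $Y_N(\tau_i^N)$ by $\tilde Y_{\tau_i^N}(N\tau_i^N)$, $B_N$ replaces $\tilde Y_{\tau_i^N}(N\tau_i^N)$ by $\tilde Y_u(N\tau_i^N)$, and $S_N=\tfrac12\sqrt{\delta_N/b_N}\sum_{i=-m_N}^{m_N}\tilde Y_u(N\tau_i^N)$. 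By (\ref{assumption:LS}) and $\|\cdot\|_{L^2}\le\|\cdot\|_{L^p}$ (here $p\ge2$) one gets $\|A_N\|_{L^2}\le\tfrac{C}{2}\sqrt{\delta_N/b_N}\,(2m_N+1)/N$ and $\|B_N\|_{L^2}\le\tfrac{C}{2}\sqrt{\delta_N/b_N}\,(2m_N+1)\,b_N$. Since $\delta_N(2m_N+1)/b_N\to2$ we have $\sqrt{\delta_N/b_N}\,(2m_N+1)\asymp\sqrt{m_N}$, so $\|A_N\|_{L^2}\lesssim\sqrt{m_N}/N=(\sqrt{m_N}b_N)/(Nb_N)\to0$ and $\|B_N\|_{L^2}\lesssim\sqrt{m_N}b_N\to0$, using $\sqrt{m_N}b_N\to0$ and $Nb_N\to\infty$ (noted after Assumption \ref{assumption:observations}). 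Hence $A_N+B_N\to0$ in probability, and by Slutsky's theorem it suffices to prove $S_N\overset{d}{\longrightarrow}\sigma(u)Z$.

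\emph{Step 2 (a stationary triangular array and its Lindeberg/dependence properties).} Set $n_N=2m_N+1$, $c_N=\tfrac12\sqrt{\delta_N/b_N}$, and, after relabelling, $W_{j,N}=\tilde Y_u(Nu+(j-m_N-1)N\delta_N)$, $X_{j,N}=c_NW_{j,N}$ for $j=1,\dots,n_N$, with $\mathcal M_j^{(N)}=\sigma(W_{l,N}:l\le j)$. By stationarity of $\tilde Y_u$ the array $(X_{j,N})_j$ is centered and row–wise stationary with $S_N=\sum_{j=1}^{n_N}X_{j,N}$; moreover $c_N\to0$ and $c_N^2n_N=\delta_Nn_N/(4b_N)\to\tfrac12$. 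The hypotheses of \cite{DM2002} amount to a Lindeberg condition, a projective ($L^1$–mixingale–type) dependence condition, and convergence of $E[S_N^2]$. The first follows from $\sum_{j=1}^{n_N}E|X_{j,N}|^{2+\varepsilon}=c_N^{\varepsilon}(c_N^2n_N)\,E|\tilde Y_u(0)|^{2+\varepsilon}\to0$ since $c_N\to0$ and $\tilde Y_u(0)\in L^{2+\varepsilon}$. For the second, $(W_{j,N})_j$ is $\theta$–weakly dependent with coefficients $\theta^{(N)}(h)=\theta(hN\delta_N)\le\theta(hC_1)$, where $C_1=\min_N N\delta_N>0$, so Lemma \ref{lemma:mixingaleandthetaweak}(a)–(c) applied to $(W_{j,N})_j$ gives $\gamma_h^{(N)}\le\theta(hN\delta_N)$ and $\sum_{l\ge1}\big\|W_{0,N}E[W_{l,N}\mid\mathcal M_0^{(N)}]\big\|_{L^1}\le D\big(\sum_{h\ge1}(h+1)^{1/\varepsilon}\theta(hN\delta_N)\big)^{\varepsilon/(1+\varepsilon)}$; an integral comparison using monotonicity of $\theta$ gives $\sum_{h\ge1}(h+1)^{1/\varepsilon}\theta(hC_1)<\infty$ under \hyperlink{DD}{DD($\varepsilon$)}, so this bound is uniform in $N$, which (after scaling by $c_N$) is the dependence condition required.

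\emph{Step 3 (asymptotic variance and conclusion).} Write $E[S_N^2]=c_N^2n_N\,E[\tilde Y_u(0)^2]+2(c_N^2n_N)\sum_{l=1}^{n_N-1}(1-l/n_N)E[\tilde Y_u(0)\tilde Y_u(lN\delta_N)]$. The first term tends to $\tfrac12E[\tilde Y_u(0)^2]$. For the second, $|E[\tilde Y_u(0)\tilde Y_u(lN\delta_N)]|\le\|W_{0,N}E[W_{l,N}\mid\mathcal M_0^{(N)}]\|_{L^1}$, so by Step 2 the tail in $l$ is summable uniformly in $N$. Under \ref{observations:O1}, $N\delta_N=\delta$ is fixed, the series is the fixed summable $\sum_{l\ge1}E[\tilde Y_u(0)\tilde Y_u(l\delta)]$, and dominated convergence gives $E[S_N^2]\to\tfrac12E[\tilde Y_u(0)^2]+\sum_{l\ge1}E[\tilde Y_u(0)\tilde Y_u(l\delta)]=\sigma(u)^2$. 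Under \ref{observations:O2}, $N\delta_N\to\infty$, so each $(h+1)^{1/\varepsilon}\theta(hN\delta_N)\to0$ and is dominated by the summable $(h+1)^{1/\varepsilon}\theta(hC_1)$; hence $\sum_{h\ge1}(h+1)^{1/\varepsilon}\theta(hN\delta_N)\to0$, the off–diagonal part vanishes, and $E[S_N^2]\to\tfrac12E[\tilde Y_u(0)^2]=\sigma(u)^2$. In both cases $0<\sigma(u)^2<\infty$, and \cite{DM2002} yields $S_N\overset{d}{\longrightarrow}N(0,\sigma(u)^2)=\sigma(u)Z$; together with Step 1 this proves (\ref{equation:centrallimitpartialsum}). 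The main obstacle is the transfer of the abstract triangular–array CLT of \cite{DM2002} to this situation: one must pin down exactly which projective–dependence hypothesis is needed and verify it \emph{uniformly in} $N$ through Lemma \ref{lemma:mixingaleandthetaweak}, the delicate point being that the array's $\theta$–coefficients $\theta(hN\delta_N)$ change with $N$, so one has to use the uniform domination $\theta(hN\delta_N)\le\theta(hC_1)$ together with \hyperlink{DD}{DD($\varepsilon$)} both to control the mixingale coefficients and, under \ref{observations:O2}, to run the dominated–convergence argument that makes the off–diagonal covariances disappear and leaves the limiting variance equal to $\tfrac12E[\tilde Y_u(0)^2]$.
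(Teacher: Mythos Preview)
Your Step~1 is correct and coincides with the paper's reduction. The gap is in Steps~2--3, where you mis-state the hypotheses of \cite{DM2002}. The result the paper invokes is \cite[Theorem~2 via Corollary~3]{DM2002}, and its conditions are \emph{not} ``Lindeberg + projective dependence + convergence of $E[S_N^2]$''. What is actually required, in the paper's notation with $X_{i,2m_N+1}$ and $S_N(t)=\sum_{i\le\lfloor t(2m_N+1)\rfloor}X_{i,2m_N+1}$, is (i) a projective condition of the type $\sum_k\|X_{0}E[X_k\mid\mathcal M_0]\|_{L^1}$ (which you do control via Lemma~\ref{lemma:mixingaleandthetaweak}(c)), (ii) the separate condition $\tfrac{1}{\sqrt{2m_N+1}}\|E[S_N(t)\mid\mathcal M_0]\|_{L^1}\to0$, which you do not verify, and (iii) for \emph{every} $t\in(0,1]$ the $L^1$-convergence
\[
\frac{1}{t(2m_N+1)}\sum_{i=1}^{\lfloor t(2m_N+1)\rfloor}X_{i,2m_N+1}X_{i+k,2m_N+1}\ \overset{L^1}{\longrightarrow}\ \lambda_k
\]
(with $k=0$ under (O2) and all $k\ge0$ under (O1)). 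Your Step~3 establishes only $E[S_N^2]\to\sigma(u)^2$, i.e.\ convergence of \emph{expectations} at $t=1$; this is strictly weaker than the required $L^1$-convergence for all $t$, and the Remark following the proof in the paper stresses precisely that verifying (iii) at $t=1$ alone is insufficient to apply \cite[Theorem~2]{DM2002} (this is in fact the reason for restricting to the rectangular kernel).

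To close the argument you would need to (a) verify the conditional-expectation bound (ii) via $\gamma_h\le\theta_h$ and \hyperlink{DD}{DD($\varepsilon$)}, and (b) upgrade your variance computation to the pathwise $L^1$-laws of large numbers for $\tilde Y_u^2$ (and, under (O1), for $\tilde Y_u(\cdot)\tilde Y_u(\cdot+k\delta)$) over all subintervals $t\in(0,1]$. The paper does this by introducing the auxiliary localizing kernels $G_t$ and invoking Theorem~\ref{theorem:lawoflargenumbersdiscreteobs}(a) (for (O2)) respectively Lemma~\ref{lemma:stationarylocalergodicdiscrete} (for (O1)), together with Proposition~\ref{proposition:inheritanceproperties} to guarantee $\theta$-weak dependence of $\tilde Y_u^2$. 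Your Lindeberg computation, while correct, plays no role in \cite[Corollary~3]{DM2002} and does not substitute for (iii).
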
 

It is natural to ask for conditions on stronger concepts of convergence as the stated convergence in distribution in (\ref{equation:centrallimitpartialsum}). The following corollary provides additional assumptions that strengthen this convergence to stable convergence. Recall that a sequence of integrable random variables $(Y_n)_{n\in\N}$ converges stably with limit $Y$, where $Y$ is defined on an extension $(\Omega',\mathcal{F}',P')$, if $E[g(Y_n)Z]\longrightarrow E'[g(Y)Z]$ as $n\rightarrow\infty$ for all bounded, continuous functions $g$ and any bounded $\mathcal{F}$-measurable random variable $Z$. Then, we write $Y_n\overset{st}{\underset{n\rightarrow\infty}{\longrightarrow}}Y$. Note that stable convergence immediately implies convergence in distribution.

\begin{Corollary}
Consider a sequence of stochastic processes $Y_N$ with centered locally stationary approximation $\tilde{Y}_u$ for some $p\geq2$ satisfying the assumptions of Theorem \ref{theorem:centrallimit}. If the family
\begin{align*}
\{\mathcal{M}_{i,2m_N+1}\}_{N\in\N}=\sigma\left(\left\{\tilde{Y}_u(N(u+(j-m_N+1)\delta_N)), j\leq i, j\in\Z\right\}\right),\quad i\in\Z
\end{align*}
is non-decreasing in $N$ for all $i\in\Z$, then 
\begin{align*}
\sqrt{\frac{\delta_N}{b_N}} \sum_{i=-m_N}^{m_N}K_{rect}\left(\frac{\tau_i^N-u}{b_N}\right)Y_N(\tau_i^N)\overset{st}{\underset{N\rightarrow\infty}{\longrightarrow}} \sigma(u) Z.
\end{align*}
\end{Corollary}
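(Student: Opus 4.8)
The plan is to upgrade the convergence in distribution from Theorem~\ref{theorem:centrallimit} to stable convergence by re-examining the proof of that theorem, which (as announced in Section~\ref{sec3-2}) rests on the central limit theorem for stationary triangular arrays of martingale-type or $L^1$-mixingale-type, following \cite{DM2002}. The key observation is that the CLT from \cite{DM2002} is actually a statement about convergence of the conditional characteristic function (or, equivalently, it can be phrased in terms of the nesting condition on the $\sigma$-algebras $\mathcal{M}_{i,2m_N+1}$), so that under a compatibility/nesting condition on the filtrations across $N$, the limit theorem automatically yields stable convergence with respect to the limiting $\sigma$-algebra $\mathcal{F}$.

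First I would recall, from the proof of Theorem~\ref{theorem:centrallimit}, the decomposition of $\sqrt{\delta_N/b_N}\sum_{i=-m_N}^{m_N}K_{rect}((\tau_i^N-u)/b_N)Y_N(\tau_i^N)$ into the ``approximation error'' terms $P_1+P_2$ (which converge to $0$ in $L^2$, hence in probability, and therefore do not affect stable convergence, since a term vanishing in probability can be absorbed) and the ``stationary array'' term $P_3 = \sqrt{\delta_N/b_N}\sum_{i=-m_N}^{m_N}K_{rect}((\tau_i^N-u)/b_N)\tilde{Y}_u(N\tau_i^N)$. It then suffices to show that $P_3 \overset{st}{\longrightarrow}\sigma(u)Z$; the conclusion for the full sum follows because stable convergence is preserved under addition of a sequence converging to $0$ in probability (the limit being a deterministic multiple of an $\mathcal{F}$-independent standard normal).

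Next I would invoke the stable version of the martingale/mixingale CLT. Under the hypothesis that the family $\{\mathcal{M}_{i,2m_N+1}\}_{N\in\N}$ is non-decreasing in $N$ for every fixed $i\in\Z$, the triangular array built from $X_{i,2m_N+1}$ (as in the proof of Theorem~\ref{theorem:lawoflargenumbersdiscreteobs}(a), and used again in the proof of Theorem~\ref{theorem:centrallimit}) is adapted to a system of filtrations that is compatible across the array index $N$. This nesting is exactly the extra ingredient needed to pass from the classical to the stable CLT: the limiting Gaussian variable is then measurable with respect to the tail, and by the nesting it is in fact independent of each $\mathcal{M}_{i,2m_N+1}$ and hence of the generated $\sigma$-algebra, giving $E[g(P_3)Z']\to E'[g(\sigma(u)Z)Z']$ for every bounded continuous $g$ and every bounded $\mathcal{F}$-measurable $Z'$. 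Since the asymptotic variance $\sigma(u)^2$ is the \emph{deterministic} constant identified in Theorem~\ref{theorem:centrallimit} (case (a) or (b) respectively), no random variance mixing occurs and the limit is genuinely $\sigma(u)Z$ with $Z$ standard normal and independent of $\mathcal{F}$.

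The main obstacle I anticipate is the bookkeeping required to check that the nesting hypothesis on $\{\mathcal{M}_{i,2m_N+1}\}_N$ is genuinely the condition under which the cited triangular-array CLT (from \cite{DM2002}, together with the $L^1$-mixingale input from \cite{A1988} and Lemma~\ref{lemma:mixingaleandthetaweak}) produces stable rather than merely distributional convergence — i.e., verifying that the Lindeberg/variance conditions established in the proof of Theorem~\ref{theorem:centrallimit} are in fact \emph{conditional} versions (conditionally on $\mathcal{M}_{0,2m_N+1}$, say) and that the limiting conditional variance is the constant $\sigma(u)^2$ almost surely. Once this is in place, the stable convergence of $P_3$, and hence of the full localized sum, follows by the standard argument that $L^2$-negligible perturbations preserve stable convergence.
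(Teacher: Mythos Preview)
Your proposal is correct and follows essentially the same approach as the paper: decompose as in Theorem~\ref{theorem:centrallimit}, absorb the approximation error (which vanishes in $L^1$ and hence in probability, preserving stable convergence), and upgrade the conditional CLT for the stationary array term to stable convergence via the nesting hypothesis on $\{\mathcal{M}_{i,2m_N+1}\}_N$. The ``bookkeeping obstacle'' you flag is exactly what the paper dispatches by citing \cite[Corollary~1]{DM2002} (which deduces stable convergence directly from the conditional CLT established in (\ref{equation:condCLT}) once the filtrations are nested in $N$) together with \cite[Theorem~3.7(i)]{HL2015} (the standard fact that stable convergence is preserved under perturbations vanishing in probability); with these two references in hand there is nothing further to verify beyond what was already done in the proof of Theorem~\ref{theorem:centrallimit}.
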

\begin{proof}
It is sufficient to follow the proof of Theorem \ref{theorem:centrallimit} and use \cite[Theorem 3.7 (i)]{HL2015} as well as \cite[Corollary 1]{DM2002}.
\end{proof}

\noindent In the following lemma we give sufficient conditions for $\{\mathcal{M}_{i,2m_N+1}\}_{N\in\N}$ to be non-decreasing.

\begin{Lemma}\label{lemma:nondecreasingfiltration}
Consider a sequence of stochastic processes $Y_N$ with centered locally stationary approximation $\tilde{Y}_u$ for some $p\geq2$ satisfying the assumptions of Theorem \ref{theorem:centrallimit} such that \hyperref[observations:O1]{(O1)} holds with $N\delta_N=\Delta>0$. Additionally assume that $u$ is a multiple of $\Delta$ and that the increments of $Nb_N$ are bounded by $u$, i.e. $(N+1)b_{N+1}-Nb_N\leq u$. Then, the family $\{\mathcal{M}_{i,2m_N+1}\}_{N\in\N}$ is non-decreasing for all $i\in\Z$.
\end{Lemma}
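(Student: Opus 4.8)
The statement concerns the filtrations
\[
\mathcal{M}_{i,2m_N+1}=\sigma\!\left(\left\{\tilde{Y}_u\big(N(u+(j-m_N+1)\delta_N)\big), j\leq i, j\in\Z\right\}\right),
\]
and one wants $\mathcal{M}_{i,2m_N+1}\subseteq\mathcal{M}_{i,2m_{N+1}+1}$ for every fixed $i\in\Z$. Under \hyperref[observations:O1]{(O1)} with $N\delta_N=\Delta$ we have $N(u+(j-m_N+1)\delta_N)=Nu+(j-m_N+1)\Delta$, so the sampling points that generate $\mathcal{M}_{i,2m_N+1}$ form a subset of the lattice $\Delta\Z$ shifted by $Nu$. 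Since $u$ is a multiple of $\Delta$, the whole point set lies on the fixed lattice $\Delta\Z$. Hence the first step is to rewrite, for each $N$, the generating set of $\mathcal{M}_{i,2m_N+1}$ as $\{\tilde{Y}_u(k\Delta): k\le k_N(i)\}$ for an explicit integer $k_N(i)$, computing $k_N(i)=Nu/\Delta+i-m_N$ (up to the harmless $+1$ bookkeeping). Because these are all evaluations of the \emph{same} stationary process $\tilde{Y}_u$ on a common lattice, the $\sigma$-algebra is simply the one generated by all lattice points up to index $k_N(i)$, and monotonicity in $N$ reduces to the arithmetic inequality $k_N(i)\le k_{N+1}(i)$.

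**The key computation.** Thus I would next show $k_{N+1}(i)-k_N(i)\ge 0$. With $k_N(i)=Nu/\Delta+i-m_N$ we get
\[
k_{N+1}(i)-k_N(i)=\frac{(N+1)u-Nu}{\Delta}+m_N-m_{N+1}=\frac{u}{\Delta}-(m_{N+1}-m_N).
\]
Now $m_N=\lfloor b_N/\delta_N\rfloor=\lfloor Nb_N/(N\delta_N)\rfloor=\lfloor Nb_N/\Delta\rfloor$, so $m_{N+1}-m_N=\lfloor (N+1)b_{N+1}/\Delta\rfloor-\lfloor Nb_N/\Delta\rfloor$. Using the elementary bound $\lfloor a\rfloor-\lfloor b\rfloor\le \lceil a-b\rceil\le (a-b)/\Delta\cdot\Delta$ — more carefully, $\lfloor x\rfloor-\lfloor y\rfloor \le \lceil (x-y)\rceil$ when $x\ge y$, or simply estimate via $m_{N+1}-m_N \le \big((N+1)b_{N+1}-Nb_N\big)/\Delta + 1$ — together with the hypothesis $(N+1)b_{N+1}-Nb_N\le u$, one obtains $m_{N+1}-m_N\le u/\Delta$ provided the integer parts cooperate; since $u/\Delta$ is an integer, the cleanest route is to observe that $\lfloor (Nb_N + ((N+1)b_{N+1}-Nb_N))/\Delta\rfloor \le \lfloor Nb_N/\Delta\rfloor + \lfloor u/\Delta\rfloor = m_N + u/\Delta$, so $m_{N+1}-m_N\le u/\Delta$, hence $k_{N+1}(i)-k_N(i)\ge 0$, as desired.

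**The main obstacle.** The conceptual content is minimal; the delicate part is purely bookkeeping — correctly tracking the index shifts ($j-m_N+1$ versus $j-m_N-1$ in the earlier display, and the exact definition of which points generate $\mathcal{M}_{i,2m_N+1}$), and handling the floor functions in $m_N$ without slipping by one. In particular one must be careful that "the increments of $Nb_N$ are bounded by $u$" is exactly the condition that makes the floor-subadditivity estimate close, using crucially that $u$ is an \emph{integer multiple} of $\Delta$ so that $\lfloor x+u/\Delta\rfloor=\lfloor x\rfloor+u/\Delta$. Once the inequality $k_{N+1}(i)\ge k_N(i)$ is established, the inclusion $\mathcal{M}_{i,2m_N+1}\subseteq\mathcal{M}_{i,2m_{N+1}+1}$ is immediate because the generating family of the former is a subfamily of that of the latter, and this holds for all $i\in\Z$, completing the proof.
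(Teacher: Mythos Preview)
Your proposal is correct and follows essentially the same route as the paper: both arguments rewrite the generating points of $\mathcal{M}_{i,2m_N+1}$ as $\{\tilde{Y}_u(k\Delta):k\le k_N(i)\}$ on the fixed lattice $\Delta\Z$ (using that $u/\Delta\in\Z$), and then reduce the inclusion $\mathcal{M}_{i,2m_N+1}\subseteq\mathcal{M}_{i,2m_{N+1}+1}$ to the arithmetic inequality $m_{N+1}-m_N\le u/\Delta$, which follows from $(N+1)b_{N+1}-Nb_N\le u$ via the floor estimate. Your handling of the floor functions, namely $\lfloor(N+1)b_{N+1}/\Delta\rfloor\le\lfloor(Nb_N+u)/\Delta\rfloor=\lfloor Nb_N/\Delta\rfloor+u/\Delta$ using that $u/\Delta$ is an integer, is in fact the clean way to close this step.
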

\begin{proof}
We need to show that for any $N\in\N$, $i\in\Z$ and $j\leq i$ there exists $j_1\in\Z$, $j_1\leq i$ such that $Nu+\delta(j-\lfloor \tfrac{Nb_N}{\delta}\rfloor+1)=(N+1)u+\delta(j_1-\lfloor \tfrac{(N+1)b_{N+1}}{\delta}\rfloor+1)$. We set $j_1=j-\lfloor \tfrac{Nb_N}{\delta}\rfloor+\lfloor \tfrac{(N+1)b_{N+1}}{\delta}\rfloor-\tfrac{u}{\delta}\in\Z$. Then, $j_1\leq i$, since
\begin{align*}
\lfloor \tfrac{(N+1)b_{N+1}}{\delta}\rfloor-\lfloor \tfrac{Nb_N}{\delta}\rfloor\leq \lfloor \tfrac{(N+1)b_{N+1}-Nb_N}{\delta}\rfloor \leq \lfloor \tfrac{u}{\delta}\rfloor =\tfrac{u}{\delta}.
\end{align*}
\end{proof}

\noindent A particular choice for $b_N$ and $\delta_N$ satisfying the assumptions of Lemma \ref{lemma:nondecreasingfiltration} is for instance $b_N=b N^{-\frac{2}{3}}$ and $\delta_N=\Delta N^{-1}$, where $0<b<u$ and $u=m\Delta$ for some $m\in\N$, $\Delta\in\R^+$.\\

\noindent\textit{Proof of Theorem \ref{theorem:centrallimit}.}
Since we obtain from (\ref{assumption:LS})
\begin{align*}
&\bigg\lVert\sqrt{\frac{\delta_N}{b_N}}\!\!\sum_{i=-m_N}^{m_N}\!\!\!\!\!K_{rect}\Big(\frac{\tau_i^N-u}{b_N}\Big)\!\left(\!Y_N(\tau_i^N)\!-\!\tilde{Y}_{\tau_i^N}(N\tau_i^N)\!+\!\tilde{Y}_{\tau_i^N}(N\tau_i^N)\!-\!\tilde{Y}_u(N\tau_i^N)\!\right)\!\bigg\rVert_{L^1}\\
&\leq \frac{ C (2m_N+1)}{2}\sqrt{\frac{\delta_N}{b_N}}\Big(\frac{1}{N}+b_N\Big) \underset{N\rightarrow\infty}{\longrightarrow}0,
\end{align*}
it is enough to show that
\begin{align*}
\sqrt{\frac{\delta_N}{b_N}}\sum_{i=-m_N}^{m_N}K_{rect}\left(\frac{\tau_i^N-u}{b_N}\right)\tilde{Y}_{u}(N\tau_i^N)\overset{d}{\underset{N\rightarrow\infty}{\longrightarrow}} \sigma(u) Z,
\end{align*}
where $Z$ is a standard normally distributed random variable. 
We define for $i\in\Z$ and $N\in\N$
\begin{align*}
W_{i,2m_N+1}&:=\tilde{Y}_u(N\tau_{i-m_N-1}^N)=\tilde{Y}_u(N(u+(i-m_N-1)\delta_N)),\notag\\
X_{i,2m_N+1}&:=\sqrt{2m_N+1}\sqrt{\frac{\delta_N}{b_N}}K_{rect}\left(\frac{\tau_{i-m_N-1}^N-u}{b_N}\right)W_{i,2m_N+1},\notag \\
S_N(t)&:=\sum_{i=1}^{\lfloor t(2m_N+1)\rfloor }X_{i,2m_N+1},\quad S_N:=S_N(1)\quad \text{ and }\notag\\
\mathcal{M}_{i,2m_N+1}&:=\sigma\left(\left\{\tilde{Y}_u(N(u+(j-m_N+1)\delta_N)), j\leq i, j\in\Z\right\}\right).
\end{align*}
As a major first step we show that the conditional central limit theorem \cite[Theorem 2]{DM2002} holds. From this conditional central limit theorem we then derive the convergence in (\ref{equation:centrallimitpartialsum}).\\
Note that $\{\mathcal{M}_{i,2m_N+1}\}_{i\in\N}$ is non-decreasing and 
\begin{gather*}
P\left(\left|\frac{1}{\sqrt{2m_N+1}}X_{0,2m_N+1}\right|>\varepsilon\right)\leq \frac{\norm{\tilde{Y}_u(0)}_{L^1}}{2\varepsilon }\sqrt{\frac{\delta_N}{b_n}}\underset{N\rightarrow\infty}{\longrightarrow}0
\end{gather*}
for all $\varepsilon>0$, since $\tilde{Y}_u$ is stationary. We proceed by considering the conditions (a) and (b) separately.
\begin{enumerate}[label={(\alph*)}]
\item[(b)] Let us assume that \hyperref[observations:O2]{(O2)} holds, i.e. $N\delta_N\rightarrow\infty$ as $N\rightarrow\infty$.
Following \cite[Corollary 3 and Remark 9]{DM2002}, it is enough to show that for all $t\in (0,1]$
\begin{align}
\lim_{N\rightarrow\infty}\sum_{k=1}^{2m_N+1} \norm{X_{0,2m_N+1}E\left[ X_{k,2m_N+1}|\mathcal{M}_{0,2m_N+1}\right]}_{L^1}=&0,\label{equation:CLTproofcond1}\\
\lim_{N\rightarrow\infty}\frac{1}{\sqrt{2m_N+1}}\norm{E\left[S_N(t)\Big|\mathcal{M}_{0,2m_N+1}\right]}_{L^1}=&0 \text{ and }\label{equation:CLTproofcond2}\\
\frac{1}{t(2m_N+1)}\sum_{i=1}^{\lfloor t(2m_N+1)\rfloor } X_{i,2m_N+1}^2 \overset{L^1}{\underset{N\rightarrow\infty}{\longrightarrow}}&\sigma(u)^2,\label{equation:CLTproofcond3}
\end{align}
where $\sigma(u)^2=\frac{1}{2}E[\tilde{Y}_u(0)^2]$ is non-negative.\\
With the help of Lemma \ref{lemma:mixingaleandthetaweak} we will now connect the conditions (\ref{equation:CLTproofcond1}) and (\ref{equation:CLTproofcond2}) to the $\theta$-coefficients of $\tilde{Y}_u$. We start by proving (\ref{equation:CLTproofcond1}). The sampled process $\{W_{i,2m_N+1}\}_{i\in\N}$ is $\theta$-weakly dependent with $\theta$-coefficients $\theta^{(2m_N+1)}(h)\!=\!\theta(N\delta_Nh)$, where $\max_{N\in\N}\!\theta^{(2m_N+1)}(h)\!=\! \theta(C_1h)$ for a constant $C_1=\min_{N\in\N}N\delta_N>0$.
Then, since $\lim_{N\rightarrow\infty}(2m_N+1)\frac{\delta_N}{b_N}=2$, (\ref{equation:CLTproofcond1}) is bounded from above by
\begin{gather*}
\lim_{N\rightarrow\infty}\frac{1}{2}\sum_{k=1}^\infty\norm{W_{0,2m_N+1} E\left[ W_{k,2m_N+1}|\mathcal{M}_{0,2m_N+1}\right]}_{L^1}.
\end{gather*}
For $U_k^{(2m_N+1)}=\norm{W_{0,2m_N+1} E\left[ W_{k,2m_N+1}|\mathcal{M}_{0,2m_N+1}\right]}_{L^1}$, we obtain due to part (c) of Lemma \ref{lemma:mixingaleandthetaweak} and (\ref{equation:suptheta}) 
\begin{align*}
\sum_{k=1}^\infty U_k^{(2m_N+1)}&\leq D \left(\sum_{h=1}^\infty(h+1)^{\frac{1}{\varepsilon}}\theta^{(2m_N+1)}(h)\right)^{\frac{\varepsilon}{1+\varepsilon}}
\leq D \left(\sum_{h=1}^\infty(h+1)^{\frac{1}{\varepsilon}}\theta(C_1h)\right)^{\frac{\varepsilon}{1+\varepsilon}}.
\end{align*}
Note that $(x+y)^p\leq \max(1,2^{p-1})(x^p+y^p)$ for all $x,y,p\in\R_0^+$. Hence, for some $\alpha$ such that $0<\alpha<\min(1,C_1)$ we obtain
\begin{align}
\begin{aligned}\label{equation:seriesthetashifted}
D \left(\sum_{h=1}^\infty(h+1)^{\frac{1}{\varepsilon}}\theta(C_1h)\right)^{\frac{\varepsilon}{1+\varepsilon}}\!\!\!\!
&\leq D \left(\sum_{h=1}^\infty(h+1)^{\frac{1}{\varepsilon}}\theta(\left\lfloor \alpha h\right\rfloor)\right)^{\frac{\varepsilon}{1+\varepsilon}}\\
&= D \left(\sum_{k=0}^\infty \theta(k) \sum_{\{h\in\N:\frac{k}{\alpha}\leq h < \frac{k+1}{\alpha} \}}(h+1)^{\frac{1}{\varepsilon}}\right)^{\frac{\varepsilon}{1+\varepsilon}}\\
&\leq D \left( \left(1+\frac{1}{\alpha}\right) \sum_{k=0}^\infty \theta(k) (\tfrac{k}{\alpha}+\tfrac{1+\alpha}{\alpha})^{\frac{1}{\varepsilon}}\right)^{\frac{\varepsilon}{1+\varepsilon}}\\
&\leq D \left(\left(\max(1,2^{\frac{1}{\varepsilon}-1})\right)  \left(\frac{1+\alpha}{\alpha^{1+\frac{1}{\varepsilon}}}\right)\sum_{k=0}^\infty \theta(k) \left(2^{\frac{1}{\varepsilon}} +k^{\frac{1}{\varepsilon}}\right)  \right)^{\frac{\varepsilon}{1+\varepsilon}},
\end{aligned}
\end{align}
which is finite, since \hyperlink{DD}{DD($\varepsilon$)} holds. By the dominated convergence theorem we obtain 
\begin{gather*}
\lim_{N\rightarrow\infty}D \left(\sum_{h=1}^\infty(h+1)^{\frac{1}{\varepsilon}}\theta(N\delta_Nh)\right)^{\frac{\varepsilon}{1+\varepsilon}}=0, \text{ such that (\ref{equation:CLTproofcond1}) holds.}
\end{gather*}
To show (\ref{equation:CLTproofcond2}), we first note that since $\tilde{Y}_u$ is centered, we have $\norm{E\left[W_{i,2m_N+1}|\mathcal{M}_{0,2m_N+1}\right]}_{L^1}\!=\!\gamma(\mathcal{M}_{0,2m_N+1},W_{i,2m_N+1})\leq \theta(\mathcal{M}_{0,2m_N+1},W_{i,2m_N+1})=\theta^{(2m_N+1)}(i)$, where we used part (a) and (b) of Lemma \ref{lemma:mixingaleandthetaweak}. Thus,
\begin{align*}
&\lim_{N\rightarrow\infty}\frac{1}{\sqrt{2m_N+1}}\!\norm{E\left[S_N(t)\Big|\mathcal{M}_{0,2m_N+1}\right]}_{L^1} \!
\leq\! \frac{1}{\sqrt{2m_N+1}}\!\!\sum_{i=1}^{2m_N+1}\!\!\!\norm{E\left[X_{i,2m_N+1}|\mathcal{M}_{0,2m_N+1}\right]}_{L^1}\\
&\leq\frac{1}{2}\sqrt{\frac{\delta_N}{b_N}}\sum_{i=1}^{2m_N+1}\theta^{(2m_N+1)}(i)=\frac{1}{2}\sqrt{\frac{\delta_N}{b_N}}\sum_{i=1}^{2m_N+1}\theta(N\delta_Ni)
\leq C_2 \sqrt{\frac{\delta_N}{b_N}}\rightarrow 0
\end{align*}
as $N\rightarrow\infty$ for some constant $C_2>0$, since \hyperlink{DD}{DD($\varepsilon$)} holds, using similar arguments as in (\ref{equation:suptheta}) and (\ref{equation:seriesthetashifted}).\\
To show (\ref{equation:CLTproofcond3}) for $\sigma(u)^2=\frac{1}{2}E[\tilde{Y}_u(0)^2]$, we define for $t\in(0,1]$ 
\begin{gather}\label{equation:localizingkernelgtx}
G_t(x)=\frac{2K_{rect}^2(x)}{t}\mathbb{1}_{\left\{x\in [-1,-1+2t]\right\}},\ x\in\R.
\end{gather} 
It is easy to see that $G_t$ is a localizing kernel for all $t\in(0,1]$. As the function $x\mapsto x^2\in\mathcal{L}_1(1,1)$, Proposition \ref{proposition:inheritanceproperties} ensures that the stationary process $\tilde{Y}_u^2$ is $\theta$-weakly dependent. Then, following the same steps as in the proof of Theorem \ref{theorem:lawoflargenumbersdiscreteobs} part $(a)$, using the localizing kernel $G_t(x)$, we obtain
\begin{gather*}
\norm{\frac{\delta_N}{b_N} \sum_{i=-m_N}^{m_N}\frac{1}{2}G_t\left(\frac{\tau_i^N-u}{b_N} \right)\tilde{Y}_{u}(N\tau_i^N)^2-\frac{E[\tilde{Y}_u(0)^2]}{2}}_{L^1}\underset{N\rightarrow\infty}{\longrightarrow}0.
\end{gather*}
Therefore, to show (\ref{equation:CLTproofcond3}) for $\sigma(u)^2=\frac{1}{2}E[\tilde{Y}_u(0)^2]$, it is enough to observe
\begin{align}
\begin{aligned}\label{eq:asymptoticvarianceCLTO2}
&\frac{\delta_N}{b_N}\Bigg\lVert\frac{1}{t} \sum_{i=1}^{\lfloor t(2m_N+1)\rfloor } K_{rect}\left(\frac{\tau_{i-m_N-1}^N-u}{b_N} \right)^2\tilde{Y}_{u}(N\tau_{i-m_N-1}^N)^2\\
&\qquad - \sum_{i=-m_N}^{m_N}\frac{1}{2}G_t\left(\frac{\tau_i^N-u}{b_N} \right)\tilde{Y}_{u}(N\tau_i^N)^2\Bigg\rVert_{L^1}\\
&\quad \leq\frac{E[\tilde{Y}_u(0)^2]}{4t}\frac{\delta_N}{b_N}  \sum_{i=1}^{2m_N+1 } \abs{ \mathbb{1}_{\left\{i\in[1,\lfloor t(2m_N+1)\rfloor ]\right\}} -\mathbb{1}_{\left\{\frac{(i-m_N-1)\delta_N}{b_N}\in[-1,2t-1]\right\}}}\\
&\quad =\frac{E[\tilde{Y}_u(0)^2]}{4t}\frac{\delta_N}{b_N}  \sum_{i=1}^{2m_N+1 } \abs{ \mathbb{1}_{\left\{i\in[1,\lfloor t(2m_N+1)\rfloor ]\right\}} -\mathbb{1}_{\left\{i\in\left[1,2t\frac{b_N}{\delta_N}-\frac{b_N}{\delta_N}+m_N+1\right]\right\}}}\\
&\quad\leq\frac{E[\tilde{Y}_u(0)^2]}{4t}\frac{\delta_N}{b_N} \Bigg|\left\{i\in\Z, i\in[1,\lfloor t(2m_N+1)\rfloor ]\triangle\left[1,2t\frac{b_N}{\delta_N}-\frac{b_N}{\delta_N}+m_N+1\right]  \right\}\Bigg|\\
&\quad \leq \frac{E[\tilde{Y}_u(0)^2]}{4t}\frac{\delta_N}{b_N} \left(\abs{\lfloor t(2m_N+1)\rfloor-2t\frac{b_N}{\delta_N}+\frac{b_N}{\delta_N}-m_N-1}+1\right)\underset{N\rightarrow\infty}{\longrightarrow}0,
\end{aligned}
\end{align}
since $\left(\abs{\lfloor t(2m_N+1)\rfloor-2t\frac{b_N}{\delta_N}+\frac{b_N}{\delta_N}-m_N-1}+1\right)$ is bounded by
\begin{align*}
\abs{\lfloor t(2m_N+1)\rfloor-2t\frac{b_N}{\delta_N}-1}+\abs{m_N-\frac{b_N}{\delta_N}}+1\leq2t +2\leq 4.
\end{align*}
Using \cite[Corollary 3 and Remark 9]{DM2002}, the conditional central limit theorem \cite[Theorem 2]{DM2002} holds and for $t=1$ we have
\begin{gather}\label{equation:condCLT}	
\lim_{N\rightarrow\infty}\norm{E\left[\varphi\left(\frac{1}{\sqrt{2m_N+1}}S_N\right)-\int_\R\varphi(x\sigma(u))g(x)dx\Bigg|\mathcal{M}_{k,2m_N+1} \right]}_{L^1}=0
\end{gather}
for all $k\!\in\!\N$ and $\varphi\! \in\!\mathcal{H}\!=\!\{\varphi\!:\!\R\rightarrow\R,\text{continuous such that }x\!\mapsto \!|(1+x^2)^{-1}\varphi(x)| \text{ is bounded}\}$. In order to show the stated convergence in (\ref{equation:centrallimitpartialsum}), it is sufficient to observe that for all continuous and bounded functions $h$ and a standard normally distributed random variable $Z$ it holds
\begin{align}
\begin{aligned}\label{equation:fromL1toconvergenceindistribution}
&\abs{E\left[h\left(\frac{1}{\sqrt{2m_N+1}}S_N\right)\right]-E\left[h\left(\sigma(u) Z\right)\right]}\\&\leq\norm{E\left[h\left(\frac{1}{\sqrt{2m_N+1}}S_N\right)-\int_\R h\left(x\sigma(u)\right)g(x)dx\Bigg|\mathcal{M}_{1,2m_N+1} \right]}_{L^1}\underset{N\rightarrow\infty}{\longrightarrow}0,
\end{aligned}
\end{align}
since $h\in\mathcal{H}$ and (\ref{equation:condCLT}) holds.
\item[(a)] Let us assume that \hyperref[observations:O1]{(O1)} holds, i.e. $N\delta_N=\delta>0$. In the following we use \cite[Corollary 3]{DM2002}, where we define
\begin{gather*}
\sigma(u)^2=\frac{1}{2}E[\tilde{Y}_u(0)^2]+\sum_{k=1}^\infty E[\tilde{Y}_u(0)\tilde{Y}_u(k\delta)]<\infty.
\end{gather*}
Let us briefly mention that, due to the decay of $\theta(h)$ (i.e. since \hyperlink{DD}{DD($\varepsilon$)} holds) and \cite[Remark 3.3]{CS2018}, $\sigma(u)^2=\frac{E[\tilde{Y}_u(0)^2]}{2}+\sum_{k=1}^\infty E[\tilde{Y}_u(0)\tilde{Y}_u(k\delta)]$ is finite.\\
Overall, we obtain a similar convergence as given in (\ref{equation:condCLT}), if for any $t\in(0,1]$ and $k\in\N_0$
\begin{align}
\lim_{K\rightarrow\infty}\limsup_{N\rightarrow\infty }\sup_{K\leq m\leq (2m_N+1)}\norm{X_{0,2m_N+1}\sum_{k=K}^mE[X_{k,2m_N+1}|\mathcal{M}_{0,2m_N+1}]}_{L^1}&=0,\label{equation:CLTproofcondO11}\\
\lim_{N\rightarrow\infty}\frac{1}{\sqrt{2m_N+1}}\norm{E\left[S_N(t)\Big|\mathcal{M}_{0,2m_N+1}\right]}_{L^1}&=0\text{ and}\label{equation:CLTproofcondO12}\\
\frac{1}{t(2m_N+1)}\sum_{i=1}^{\lfloor t(2m_N+1)\rfloor } X_{i,2m_N+1}X_{i+k,2m_N+1} \overset{L^1}{\underset{N\rightarrow\infty}{\longrightarrow}}&\lambda_k,\label{equation:CLTproofcondO13}
\end{align}
where $\lambda_k=E[\tilde{Y}_u(0)\tilde{Y}_u(k\delta)]$. We first note that (\ref{equation:CLTproofcondO12}) follows analogously to the proof of (\ref{equation:CLTproofcond2}).\\
To show (\ref{equation:CLTproofcondO11}), we follow similar steps as in the proof of (\ref{equation:CLTproofcond1}) and obtain for $K\in\N$
\begin{align*}
&\limsup_{N\rightarrow\infty }\sup_{K\leq m\leq (2m_N+1)}\norm{X_{0,2m_N+1}\sum_{k=K}^mE[X_{k,2m_N+1}|\mathcal{M}_{0,2m_N+1}]}_{L^1}\\
&\quad \leq \frac{1}{2}\limsup_{N\rightarrow\infty } \sum_{k=K}^\infty \norm{W_{0,2m_N+1}E[W_{k,2m_N+1}|\mathcal{M}_{0,2m_N+1}]}_{L^1}\\
&\quad = \frac{1}{2}\limsup_{N\rightarrow\infty } \sum_{k=K}^\infty \norm{\tilde{Y}_u(0)E[\tilde{Y}_u(k\delta)|\sigma(\{\tilde{Y}_u(j\delta):j\leq0,j\in\Z\})]}_{L^1}
=:V(K),
\end{align*}
since $\tilde{Y}_u$ is stationary. Now, due to the third part of Lemma \ref{lemma:mixingaleandthetaweak}, we obtain
\begin{align*}
& \limsup_{N\rightarrow\infty } \sum_{k=1}^\infty \norm{\tilde{Y}_u(0)E[\tilde{Y}_u(k\delta)|\sigma(\{\tilde{Y}_u(j\delta):j\leq0,j\in\Z\}]}_{L^1}\\
&\quad \leq D \left(\sum_{h=1}^\infty(h+1)^{\frac{1}{\varepsilon}}\theta(\delta h)\right)^{\frac{\varepsilon}{1+\varepsilon}}<\infty,
\end{align*}
since \hyperlink{DD}{DD($\varepsilon$)} holds, using similar arguments as in (\ref{equation:seriesthetashifted}). By the dominated convergence theorem we obtain $V(K)\rightarrow0$ as $K\rightarrow\infty$. Finally, it is left to show (\ref{equation:CLTproofcondO13}). First, we note that
\begin{align*}
&\Bigg\lVert \frac{1}{t}\frac{\delta_N}{b_N}\sum_{i=1}^{\lfloor t(2m_N+1)\rfloor } K_{rect}\left(\frac{\tau_{i-m_N-1}^N-u}{b_N} \right)^2\tilde{Y}_{u}(N\tau_{i-m_N-1}^N)\tilde{Y}_{u}(N\tau_{i+k-m_N-1}^N)\\
&\quad-\frac{1}{t(2m_N+1)}\sum_{i=1}^{\lfloor t(2m_N+1)\rfloor }X_{i,2m_N+1}X_{i+k,2m_N+1}\Bigg\rVert_{L^1}\\
&\leq \frac{1}{2t}\frac{\delta_N}{b_N}E[|\tilde{Y}_{u}(0)\tilde{Y}_{u}(k\delta)|]\sum_{i=-m_N}^{m_N}\abs{K_{rect}\left(\frac{\tau_{i}^N-u}{b_N}\right)-K_{rect}\left(\frac{\tau_{i+k}^N-u}{b_N} \right) }\\
&\leq  \frac{1}{2t}\frac{\delta_N}{b_N}E[|\tilde{Y}_{u}(0)\tilde{Y}_{u}(k\delta)|] k B_K\underset{N\rightarrow\infty}{\longrightarrow}0
\end{align*}
for some constant $B_K\geq0$, since $K$ is of bounded variation. Similar arguments as in (\ref{eq:asymptoticvarianceCLTO2}) give 
\begin{align*}
\Bigg\lVert \frac{1}{t}\frac{\delta_N}{b_N}\sum_{i=1}^{\lfloor t(2m_N+1)\rfloor } K_{rect}\left(\frac{\tau_{i-m_N-1}^N-u}{b_N} \right)^2\tilde{Y}_{u}(N\tau_{i-m_N-1}^N)\tilde{Y}_{u}(N\tau_{i+k-m_N-1}^N)\\
-\frac{\delta_N}{b_N}\sum_{i=-m_N}^{m_N}\frac{1}{2}G_t\left(\frac{\tau_i^N-u}{b_N} \right)\tilde{Y}_{u}(N\tau_i^N)\tilde{Y}_{u}(N\tau_{i+k}^N)\Bigg\rVert_{L^1}\underset{N\rightarrow\infty}{\longrightarrow}0,
\end{align*}
where $G_t(x)$ is defined as in (\ref{equation:localizingkernelgtx}). Thus, (\ref{equation:CLTproofcondO13}) holds, since
\begin{align*}
\norm{\frac{\delta_N}{b_N}\sum_{i=-m_N}^{m_N}\frac{1}{2}G_t\left(\frac{\tau_i^N-u}{b_N} \right)\tilde{Y}_{u}(N\tau_i^N)\tilde{Y}_{u}(N\tau_{i+k}^N)-\frac{E[\tilde{Y}_u(0)\tilde{Y}_u(k\delta)]}{2}}_{L^1}\underset{N\rightarrow\infty}{\longrightarrow}0,
\end{align*}
by Lemma \ref{lemma:stationarylocalergodicdiscrete}. We conclude by similar steps as in (\ref{equation:fromL1toconvergenceindistribution}).
\end{enumerate}
\qed
%

\begin{Remark}
We restrict ourselves to the rectangular kernel (\ref{equation:rectangularkernel}) as for general kernels it seems only possible to establish the conditions (\ref{equation:CLTproofcond3}) and (\ref{equation:CLTproofcondO13}) for $t=1$, which is not sufficient to apply \cite[Theorem 2]{DM2002}.
\end{Remark}

\section{Locally stationary processes in continuous-time}
\label{sec4}
\noindent In the sequel we first review the definition of locally stationary processes in continuous-time from \cite{BS2021}. Then, we relate it to processes that possess a locally stationary approximation as introduced in Definition \ref{definition:statapproxconttime}.

\subsection{Preliminaries}
\label{sec4-1}
We summarize elementary properties of L\'evy processes and stochastic integration with respect to them. For further insight we refer to \cite{A2009} and \cite{S2013}.

\begin{Definition}
A real-valued stochastic process $L=\{L(t),t\in \R_0^+\}$ is called L\'evy process if
\begin{enumerate}[label={(\alph*)}]
\item $L(0)=0$ almost surely,
\item for any $n\in\N$ and $t_0<t_1<t_2<\dots<t_n$, the random variables $(L(t_0),L(t_1)-L(t_0),\dots,L(t_n)-L(t_{n-1}))$ are independent,
\item for all $s,t \geq 0$, the distribution of $L(s+t)-L(s)$ does not depend on $s$ and
\item $L$ is stochastically continuous.
\end{enumerate}
Without loss of generality we additionally consider $L$ to be c\`adl\`ag, i.e. right continuous with finite left limits.
\end{Definition}

Let $L=\{L(t),t\in \R_0^+\}$ be a real-valued L\'evy process. Then, $L(1)$ is an infinitely divisible real-valued random variable with characteristic triplet $(\gamma,\Sigma,\nu)$, where $\gamma \in \R$, $\Sigma>0$ and $\nu$ is a L\'evy measure on $\R$, i.e. $\nu(0)=0$ and $\int_{\R}\left(1\wedge\normabs{x}^2\right)\nu(dx)<\infty$. The characteristic function of $L(t)$ is given by
\begin{align}\label{eq:levykhintchine}
\begin{aligned}
\varphi_{L(t)}(z)&=E[e^{izL(t)}]=e^{t \Psi_{L}(z)},\\
\Psi_L(z)&=\left( i\gamma z -\frac{\Sigma z^2}{2}+\int_{\R } \left(e^{i z x}-1-i z x \mathbb{1}_{Z}(x)	\right)\nu(dx)\right),
\end{aligned}
\end{align}
where $z \in \R$ and $Z=\{x \in \R, \normabs{x}\leq 1\}$. 
%
If $\nu$ has finite second moment, i.e.
\begin{gather}\label{eq:condlevyproc}
\int_{\normabs{x}>1}\normabs{x}^2\nu(dx)<\infty \left(\iff\int_{\R}\normabs{x}^2\nu(dx)<\infty\right),
\end{gather}
then $L(t)\in L^2$ for all $t\geq0$ and we have $E[L(t)]=t \left(\gamma+\int_{\normabs{x}>1}x\nu(dx)\right)<\infty$ and $Var(L(t))=t \left(\Sigma +\int_{\R}x^2\nu(dx)\right)<\infty$. In the remainder we work with two-sided L\'evy process, i.e. $L(t)=L_1(t)\mathbb{1}_{\{t\geq0\}} - L_2(-t)\mathbb{1}_{\{t<0\}}$, where $L_1$ and $L_2$ are independent copies of a one-sided L\'evy process.\\
Consider
\begin{gather}\label{eq:stochint}
X(t)=\int_\R f(t,s) L(ds),
\end{gather}
where $t\in\R$ and $f:\R\times\R\mapsto\R$ is $\mathcal{B}(\R\times\R)-\mathcal{B}(\R)$ measurable.
Necessary and sufficient conditions for the stochastic integral (\ref{eq:stochint}) to exist are given in \cite[Theorem 3.3]{S2014}, namely if 
\begin{align}\label{eq:intexistencecond}
\begin{aligned}
&\Sigma\int_\R f(t,s)^2ds<\infty,\\
&\int_\R \int_{\R}\left((f(t,s)x)^2\wedge 1 \right)\nu(dx)ds<\infty\text{ and}\\
&\int_\R \normabs{f(t,s)\left( \gamma+\int_{\R}x\left(\mathbb{1}_{[0,1]} \left(\normabs{f(t,s)x}\right)- \mathbb{1}_{[0,1]}\left( \normabs{x}\right)\nu(dx)\right) \right)}ds<\infty
\end{aligned}
\end{align}
are satisfied, then (\ref{eq:stochint}) is well-defined. If $L$ satisfies (\ref{eq:condlevyproc}) and $f(t,\cdot)\in L^1(\R)\cap L^2(\R)$, then the conditions (\ref{eq:intexistencecond}) are satisfied and the integral $X(t)=\int_\R f(t,s)L(ds)$ exists in $L^2$. If $X=\{X(t), t\in\R\}$ with $X(t)$ as in (\ref{eq:stochint}) is well-defined, then $X(t)$ is infinitely divisible with characteristic triplet $(\gamma_{int},\Sigma_{int},\nu_{int})$, where
\begin{align}
\begin{aligned}\label{equation:charactersitictripletint}
\gamma_{int}&=\int_{\R}f(t,s)\gamma ds+ \int_\R\int_{\R}f(t,s)x\left(\mathbb{1}_{[0,1]}(\normabs{f(t,s)x})-\mathbb{1}_{[0,1]}(\normabs{x})\nu(dx)\right)ds,\\
\Sigma_{int}&=\Sigma\int_{\R}f(t,s)^2ds \quad\text{ and} \\
\nu_{int}(B)&= \int_{\R}\int_{\R} \mathbb{1}_B(f(t,s)x)\nu(dx) ds, 
\end{aligned}
\end{align}
where $B\in\mathcal{B}(\R)$.

\subsection{Locally stationary processes in continuous-time}
\label{sec4-2}

The following definition was first given in \cite[Definition 3.1]{BS2021}.

\begin{Definition}\label{definition:localstatconttime}
Let $Y_N=\{Y_N(t), t\in\R\}_{N\in\N}$ be a sequence of stochastic processes. Then, $Y_N$ is called locally stationary if there exists a representation
\begin{gather}\label{equation:localstatconttime}
Y_N(t) =\int_\R g_N(Nt,Nt-s)L(ds), 
\end{gather}
where
\begin{enumerate}[label={(\alph*)}]
\item $L$ is a two sided real-valued L\'evy process such that (\ref{eq:condlevyproc}) holds,
\item $g_N:\R\times\R\rightarrow\R$ with $g_N(Nt,\cdot)\in L^2(\R)$ for all $t\in\R$ and $N\in\N$,
\item there exists a limiting local kernel function $g:\R\times\R\rightarrow\R$ such that $g(t,\cdot)\in L^2(\R)$ for all $t\in\R$, $t\mapsto g(t,\cdot)$ is continuous for all $t\in\R$ and
\begin{align*}
g_N(Nt,\cdot)\underset{N\rightarrow \infty}{\overset{L^2}{\longrightarrow}}g(t,\cdot)
\end{align*}
for all $t\in\R$.
\end{enumerate}
\end{Definition}

\begin{Remark}
\begin{enumerate}[label={(\alph*)}]
\item Definition \ref{definition:localstatconttime} can also be reformulated in the frequency domain. To this end one defines locally stationary processes via a time-varying spectral representation, which can be locally approximated by the spectral representation of a stationary process (see \cite[Definition 3.2]{BS2021}). In the continuous-time framework, the definition of local stationarity in the time and frequency domain are equivalent (see \cite[Proposition 3.3]{BS2021}).
\item From \cite[Proposition 3.5]{BS2021} it follows that the sequence (\ref{equation:localstatconttime}) converges pointwise for each $t\in\R$ in distribution for $N\rightarrow\infty$ to the stationary process 
\begin{gather*}
\int_{\R}g(t,-s)L(ds).
\end{gather*}
\end{enumerate}
\end{Remark}

\begin{Proposition}
Consider a sequence of stochastic processes $Y_N$ in the form of (\ref{equation:localstatconttime}) such that (\ref{eq:condlevyproc}) holds and $E[L(1)]=0$. Let $\tilde{Y}_u=\{\tilde{Y}_u(t),t\in\R\}_{u\in\R^+}$ be the family of stationary processes $\tilde{Y}_u(t)=\int_\R g(u,t-s)L(ds)$, such that $g_N(Nt,\cdot),g(t,\cdot) \in L^2(\R)$ for all $t\in\R$ and $N\in\N$. Assume that $\tilde{Y}_u$ is a locally stationary approximation of $Y_N$ for some $p\geq1$. Then, $Y_N$ is locally stationary.
\end{Proposition}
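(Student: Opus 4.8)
\emph{Proof plan.} The plan is to verify the three conditions of Definition \ref{definition:localstatconttime} for the given representation $Y_N(t)=\int_\R g_N(Nt,Nt-s)L(ds)$. Conditions (a) and (b) are part of the hypotheses, so the real work lies in (c): exhibiting a limiting kernel that is continuous in time and to which $g_N(Nt,\cdot)$ converges in $L^2(\R)$. The natural candidate is the kernel $g$ appearing in $\tilde Y_u(t)=\int_\R g(u,t-s)L(ds)$, and both properties will be read off from the two estimates in \eqref{assumption:LS} by means of the $L^2$-isometry for L\'evy integrals.

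First I would record that isometry. Since $L$ satisfies \eqref{eq:condlevyproc} with $E[L(1)]=0$, the number $c:=\Sigma+\int_\R x^2\nu(dx)=Var(L(1))$ is finite, and for every $f\in L^2(\R)$ the integral $\int_\R f(s)L(ds)$ is a centered $L^2$-random variable with $\norm{\int_\R f(s)L(ds)}_{L^2}^2=c\,\norm{f}_{L^2(\R)}^2$; this is read off from \eqref{equation:charactersitictripletint}. If $c=0$ then $L\equiv0$ and the statement is trivial, so assume $c>0$. Because Lebesgue measure is invariant under $s\mapsto t-s$, one also gets $\norm{\int_\R h(t-s)L(ds)}_{L^2}^2=c\,\norm{h}_{L^2(\R)}^2$ for all $h\in L^2(\R)$ and $t\in\R$.

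Next I would apply this twice. Writing $\tilde Y_u(t)-\tilde Y_v(t)=\int_\R\big(g(u,t-s)-g(v,t-s)\big)L(ds)$ and $Y_N(t)-\tilde Y_t(Nt)=\int_\R\big(g_N(Nt,Nt-s)-g(t,Nt-s)\big)L(ds)$ and using the above with $h=g(u,\cdot)-g(v,\cdot)$ and $h=g_N(Nt,\cdot)-g(t,\cdot)$ respectively, I obtain
\[
\norm{\tilde Y_u(t)-\tilde Y_v(t)}_{L^2}^2=c\,\norm{g(u,\cdot)-g(v,\cdot)}_{L^2(\R)}^2,\qquad
\norm{Y_N(t)-\tilde Y_t(Nt)}_{L^2}^2=c\,\norm{g_N(Nt,\cdot)-g(t,\cdot)}_{L^2(\R)}^2 .
\]
Combining these identities with \eqref{assumption:LS} then gives $\norm{g(u,\cdot)-g(v,\cdot)}_{L^2(\R)}\le \tfrac{C}{\sqrt c}\abs{u-v}$ and $\norm{g_N(Nt,\cdot)-g(t,\cdot)}_{L^2(\R)}\le \tfrac{C}{\sqrt c\,N}\to0$. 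The first bound makes $u\mapsto g(u,\cdot)$ Lipschitz continuous as a map into $L^2(\R)$, and the second is exactly the $L^2$-convergence $g_N(Nt,\cdot)\to g(t,\cdot)$ required in Definition \ref{definition:localstatconttime}(c). Together with $g(t,\cdot)\in L^2(\R)$ from the hypotheses, this verifies (c), so $Y_N$ is locally stationary.

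I do not expect a genuine obstacle: the whole argument reduces to the L\'evy $L^2$-isometry plus translation invariance of $\lambda$. The only points requiring a word of care are the degenerate case $c=0$ (disposed of above) and the passage from \eqref{assumption:LS}, which is phrased in $L^p$, to a bound in $L^2$ on the kernels; this is immediate via $\norm{\cdot}_{L^2}\le\norm{\cdot}_{L^p}$ when $p\ge2$, which is the regime relevant for the asymptotic results of Section \ref{sec3}, and I would present the argument in that form.
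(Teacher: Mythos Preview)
Your approach is exactly the paper's: translate the second bound in \eqref{assumption:LS} into an $L^2(\R)$ bound on $g_N(Nt,\cdot)-g(t,\cdot)$ via the $L^2$-isometry for centered L\'evy integrals. You are in fact more thorough than the paper---you also verify the continuity of $t\mapsto g(t,\cdot)$ required by Definition~\ref{definition:localstatconttime}(c) (using the first bound in \eqref{assumption:LS}), which the paper's short proof omits, and you dispose of the degenerate case $c=0$. Your caveat about needing $p\ge2$ to pass from $\norm{\cdot}_{L^p(\Omega)}$ to $\norm{\cdot}_{L^2(\Omega)}$ is honest and matches what the paper's proof actually establishes, even though the statement is phrased for ``some $p\ge1$''.
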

\begin{proof}
It holds
\begin{align*}
\Lnorm{g_N(Nt,\cdot)-g(t,\cdot)}&=\Lnorm{g_N(Nt,Nt-\cdot)-g(t,Nt-\cdot)}= \frac{1}{\Sigma}\Lnorm{Y_N(t)-\tilde{Y}_t(Nt)}\\
&\leq  \frac{1}{\Sigma} \frac{C}{N}\underset{N\rightarrow\infty}{\longrightarrow}0.
\end{align*}
\end{proof}

\begin{Proposition}
Let $Y_N=\{Y_N(t), t\in\R\}_{N\in\N}$ be a locally stationary process. Define $\tilde{Y}_u(t)=\int_\R g(u,t-s)L(ds)$, where $g$ is the corresponding limiting local kernel function of $Y_N$. If it holds that $\sup_{u\in\R}\norm{g(u,\cdot)}_{L^2}<\infty$,
\begin{align*}
\norm{g(u,\cdot)-g(v,\cdot)}_{L^2}\leq C \abs{u-v} \text{ and } \norm{g(Nt,\cdot)-g(t,\cdot)}_{L^2} \leq C \frac{1}{N},
\end{align*}
then $\tilde{Y}_u$ is a locally stationary approximation of $Y_N$ for $p=2$.
\end{Proposition}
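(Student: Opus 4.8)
The plan is to verify directly the four ingredients of Definition \ref{definition:statapproxconttime} for $p=2$: that $\tilde{Y}_u$ is stationary and ergodic for every $u$, that $\sup_{u\in\R^+}\Lnorm{\tilde{Y}_u(0)}<\infty$, that $\Lnorm{\tilde{Y}_u(t)-\tilde{Y}_v(t)}\leq C\abs{u-v}$ uniformly in $t$, and that $\Lnorm{Y_N(t)-\tilde{Y}_t(Nt)}\leq C/N$ uniformly in $t$. The single tool that makes the three norm conditions immediate is the It\^o-type isometry for integrals against $L$: since $L$ satisfies (\ref{eq:condlevyproc}) and, as in the preceding proposition, $E[L(1)]=0$, the integral $\int_\R f(s)\,L(ds)$ exists in $L^2$ for every $f\in L^2(\R)$ (cf.\ Section \ref{sec4-1}) and $\Lnorm{\int_\R f(s)\,L(ds)}^2=Var(L(1))\,\Lnorm{f}^2$. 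Combined with the elementary fact that $s\mapsto t-s$ leaves the Lebesgue integral invariant, so that $\Lnorm{g(u,t-\cdot)}=\Lnorm{g(u,\cdot)}$ for every $t$, each norm condition will reduce to a hypothesis already imposed on the kernel $g$.

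For stationarity, fix $u$; then $\tilde{Y}_u(t)=\int_\R g(u,t-s)\,L(ds)$ is of the form $\int_\R f(t-s)\,L(ds)$ with $f=g(u,\cdot)\in L^2(\R)$, i.e.\ a L\'evy-driven moving average with time-invariant kernel, so it is stationary by the stationarity and independence of the increments of $L$ (the substitution $s\mapsto s+h$ in the integral); this is the stationary limit alluded to in the remark following Definition \ref{definition:localstatconttime}. The one step here that is not purely formal is ergodicity, which I would obtain by invoking a known mixing result for L\'evy-driven (more generally, infinitely divisible) moving average processes, yielding mixing and hence ergodicity of $\tilde{Y}_u$. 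For the uniform bound, the isometry gives $\Lnorm{\tilde{Y}_u(0)}^2=Var(L(1))\,\Lnorm{g(u,-\cdot)}^2=Var(L(1))\,\Lnorm{g(u,\cdot)}^2\leq Var(L(1))\,\sup_{v\in\R}\Lnorm{g(v,\cdot)}^2<\infty$ by assumption.

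It then remains to check the two bounds in (\ref{assumption:LS}). For the first, $\tilde{Y}_u(t)-\tilde{Y}_v(t)=\int_\R\big(g(u,t-s)-g(v,t-s)\big)\,L(ds)$, so the isometry and the invariance noted above give $\Lnorm{\tilde{Y}_u(t)-\tilde{Y}_v(t)}=\sqrt{Var(L(1))}\,\Lnorm{g(u,\cdot)-g(v,\cdot)}\leq\sqrt{Var(L(1))}\,C\abs{u-v}$, uniformly in $t$, by the first assumed kernel bound. For the second, since $Y_N(t)=\int_\R g_N(Nt,Nt-s)\,L(ds)$ and $\tilde{Y}_t(Nt)=\int_\R g(t,Nt-s)\,L(ds)$, the difference equals $\int_\R\big(g_N(Nt,Nt-s)-g(t,Nt-s)\big)\,L(ds)$, whence $\Lnorm{Y_N(t)-\tilde{Y}_t(Nt)}=\sqrt{Var(L(1))}\,\Lnorm{g_N(Nt,\cdot)-g(t,\cdot)}\leq\sqrt{Var(L(1))}\,C/N$, uniformly in $t$, by the second assumed kernel bound (read with $g_N(Nt,\cdot)$, in accordance with Definition \ref{definition:localstatconttime}(c)). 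Replacing $C$ by the largest of the constants produced above yields (\ref{assumption:LS}), and hence the claim.

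I expect the only genuine obstacle to be the ergodicity of $\tilde{Y}_u$: for non-Gaussian driving noise this is not a soft statement and must be imported as a mixing result for L\'evy-driven moving averages. A secondary point requiring care is that the clean isometry used above relies on the centering $E[L(1)]=0$ (inherited from the setting of the preceding proposition); without it one picks up an additional term $(E[L(1)])^2\Lonenorm{\cdot}^2$ and would need, in addition, the $L^1$-analogues of the two kernel bounds.
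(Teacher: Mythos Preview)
Your proof is correct and follows essentially the same route as the paper: ergodicity is imported from an external mixing result for L\'evy-driven moving averages (the paper cites \cite[Theorem 3.5]{FS2013}), and the two bounds in (\ref{assumption:LS}) reduce via the $L^2$-isometry for integrals against $L$ to the assumed kernel bounds. You are in fact more careful than the paper in two respects: you verify the uniform bound $\sup_u\Lnorm{\tilde{Y}_u(0)}<\infty$ explicitly, and you flag both the tacit centering $E[L(1)]=0$ (needed for the clean isometry) and the apparent typo $g(Nt,\cdot)$ for $g_N(Nt,\cdot)$ in the hypothesis.
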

\begin{proof}
The ergodicity of $\tilde{Y}_u$ follows for instance from \cite[Theorem 3.5]{FS2013}. Then it is sufficient to observe that
\begin{align*}
\norm{\tilde{Y}_u(t)-\tilde{Y}_v(t)}_{L^2}&=\Sigma \norm{g(u,\cdot)-g(v,\cdot)}_{L^2}\leq C \Sigma \abs{u-v} \text{ and}\\
\norm{Y_N(t)-\tilde{Y}_{t}(Nt)}_{L^2}& = \Sigma \norm{g_N(Nt,Nt-\cdot)- g(t,Nt-\cdot)}_{L^2}\leq \frac{C \Sigma }{N}.
\end{align*}
\end{proof}

\section{Time-varying L\'evy-driven state space models and CARMA(p,q) processes}
\label{sec5}
In this section we first introduce time-varying L\'evy-driven autoregressive moving average (tvCARMA) processes. Then we review conditions such that sequences of tvCARMA processes are locally stationary and discuss conditions for the existence of a locally stationary approximation. Since the class of time-varying L\'evy-driven CARMA processes is embedded in the class of L\'evy-driven linear state space models, we state our results in terms of this class of processes.
Throughout this section, $L=\{L(t),t\in\R\}$ denotes a two-sided L\'evy process with values in $\R$, where we assume that the characteristic triplet $(\gamma,\Sigma,\nu)$ satisfies (\ref{eq:condlevyproc}).

\subsection{L\'evy-driven Ornstein-Uhlenbeck (CAR(1)) processes}
\label{sec5-1}
The simplest L\'evy-driven CARMA(p,q) process is obtained for $p=1$ and $q=0$, i.e. the L\'evy-driven CAR(1) or L\'evy-driven Ornstein-Uhlenbeck process. It is given as the stationary solution to the differential equation $dY(t)=-aY(t)dt+L(dt)$ for a constant $a>0$. The solution can be expressed as 
\begin{gather*}
Y(t)=\int_{-\infty}^t e^{-a(t-s)} L(ds).
\end{gather*}
If one allows $a$ to be time-varying, we arrive at the so called time-varying L\'evy-driven CAR(1) process, which is given by
\begin{gather*}
Y(t)=\int_{-\infty}^t e^{\int_s^ta(\tau)d\tau} L(ds).
\end{gather*}
To investigate processes of the above form in the context of local stationarity, we consider $Y_N=\{Y_N(t),t\in\R\}_{N\in\N}$ to be a sequence of rescaled time-varying CAR(1) processes defined by 
\begin{align}\label{eq:tvcar}
\begin{aligned}
Y_N(t)&=\int_{-\infty}^{\infty}g_N(Nt,Nt-s)L(ds), \text{ with kernel function}\\
g_N(Nt,Nt-s)&=\mathbb{1}_{\{Nt-s\geq0\}} e^{-\int_{s}^{Nt}a\left(\frac{\tau}{N}\right)d\tau}= \mathbb{1}_{\{Nt-s\geq0\}} e^{-\int_{-(Nt-s)}^0a\left(\frac{\tau+Nt}{N}\right)d\tau},
\end{aligned}
\end{align}
where $a:\R\rightarrow\R_0^+$ is continuous such that $s\mapsto e^{-\int_{-s}^{0}a\left(\frac{\tau+Nt}{N} \right)d\tau}\in L^1(\R^+)$ for all $t\in\R$ and $N\in\N$, which ensures the existence of (\ref{eq:tvcar}), since additionally (\ref{eq:condlevyproc}) holds. In the next proposition we revise sufficient conditions such that the sequence $Y_N$ is locally stationary in the sense of Definition \ref{definition:localstatconttime}. Then we give additional conditions for the existence of a locally stationary approximation for $p=2$ and $p=4$, where the locally stationary approximation $\tilde{Y}_{u}$ is given by
\begin{align}\label{eq:locapproxcar}
\begin{aligned}
\tilde{Y}_{u}(t)&=\int_\R g(u,t-s)L(ds), \text{ with kernel function}\\
g(u,t-s)&= \mathbb{1}_{\{t-s\geq0\}} e^{-a(u)(t-s)},
\end{aligned}
\end{align}
where we assume that $s\mapsto \mathbb{1}_{\{s\geq0\}} e^{-a(u)s}\in L^1(\R^+)$ for all $u\in\R^+$ and $N\in\N$.

\begin{Proposition}[{\cite[Proposition 4.1]{BS2021}}]
Let $Y_N$ be a sequence of time-varying L\'evy-driven CAR(1) processes as given in (\ref{eq:tvcar}). 
Then, $Y_N$ is locally stationary if 
\begin{enumerate}[label={(\alph*)}]
\item the coefficient function $a$ is continuous and
\item for every $T\in\R^+$ there exists $\varepsilon_T>0$ such that $a(s)>\varepsilon_T$ for all $s\leq T$.
\end{enumerate}
The corresponding limiting local kernel function is given by $g(u,t)$ as defined in (\ref{eq:locapproxcar}).
\end{Proposition}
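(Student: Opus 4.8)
The plan is to verify directly the three defining conditions of Definition \ref{definition:localstatconttime} for the representation (\ref{eq:tvcar}), with limiting local kernel $g$ as in (\ref{eq:locapproxcar}). Condition (a) of that definition holds throughout Section \ref{sec5} since the characteristic triplet satisfies (\ref{eq:condlevyproc}), so the whole argument reduces to controlling the kernels. The tool used at every step is the exponential bound coming from hypothesis (b): for a fixed $t\in\R$ choose $T\in\R^+$ with $t\leq T$ (any $T>0$ will do if $t\leq0$); then $a(s)>\varepsilon_T$ for all $s\leq T$, and along the integration ranges appearing below the rescaled argument never exceeds $t$.

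First I would read off from the second expression for the kernel in (\ref{eq:tvcar}) that, viewed as functions of the second slot $r=Nt-s$,
\[
g_N(Nt,r)=\mathbb{1}_{\{r\geq0\}}\exp\!\Big(-\!\int_{-r}^{0}a\big(t+\tfrac{\sigma}{N}\big)\,d\sigma\Big),\qquad g(t,r)=\mathbb{1}_{\{r\geq0\}}e^{-a(t)r},
\]
which puts the limit in evidence. Since $a(t+\sigma/N)>\varepsilon_T$ for every $\sigma\leq0$ and every $N\in\N$, one obtains $g_N(Nt,r)\leq\mathbb{1}_{\{r\geq0\}}e^{-\varepsilon_T r}$, a bound uniform in $N$ and lying in $L^1(\R)\cap L^2(\R)$; in particular $g_N(Nt,\cdot)\in L^2(\R)$ for all $t,N$, which is condition (b) of Definition \ref{definition:localstatconttime}, and the analogous bound $g(t,r)\leq\mathbb{1}_{\{r\geq0\}}e^{-\varepsilon_T r}$ (using $a(t)>\varepsilon_T$) gives $g(t,\cdot)\in L^2(\R)$ together with the integrability assumptions presupposed in (\ref{eq:tvcar}) and (\ref{eq:locapproxcar}).

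For the continuity of $t\mapsto g(t,\cdot)$ in condition (c) I would combine continuity and strict positivity of $a$: near a fixed $t$ there is $\varepsilon>0$ with $a\geq\varepsilon$, and the elementary estimate $|e^{-ar}-e^{-br}|\leq r|a-b|e^{-\min(a,b)r}$ then gives $\norm{g(t',\cdot)-g(t,\cdot)}_{L^2}\leq C\,|a(t')-a(t)|\to0$ as $t'\to t$ by continuity of $a$. For the convergence $g_N(Nt,\cdot)\to g(t,\cdot)$ in $L^2$, continuity of $a$ yields $\int_{-r}^{0}a(t+\sigma/N)\,d\sigma\to a(t)r$ for each fixed $r\geq0$ (the integrand converges uniformly on the compact interval $[-r,0]$), so $g_N(Nt,r)\to g(t,r)$ pointwise; the uniform domination $g_N(Nt,r)^2\leq\mathbb{1}_{\{r\geq0\}}e^{-2\varepsilon_T r}\in L^1(\R)$ together with the dominated convergence theorem then delivers $\norm{g_N(Nt,\cdot)-g(t,\cdot)}_{L^2}\to0$. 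This establishes condition (c) and completes the proof.

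The one point requiring care is the uniform-in-$N$ exponential domination of $g_N(Nt,\cdot)$: one must observe that the integration variable $\tau$ in (\ref{eq:tvcar}) never exceeds $Nt$, so that the rescaled argument $\tau/N$ stays in the range where hypothesis (b) provides a strictly positive lower bound for $a$; only then do both the $L^2$-membership of $g_N(Nt,\cdot)$ and the dominated-convergence passage to the limit go through. Everything else is routine.
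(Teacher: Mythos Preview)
Your proof is correct. Note that the paper does not give its own proof of this proposition; it simply cites \cite[Proposition 4.1]{BS2021} and moves on, so there is nothing to compare against in the present paper. Your argument---direct verification of the three conditions of Definition \ref{definition:localstatconttime} via the uniform exponential envelope $\mathbb{1}_{\{r\geq0\}}e^{-\varepsilon_T r}$ supplied by hypothesis (b), followed by pointwise convergence plus dominated convergence for the $L^2$ limit---is exactly the natural route and is essentially what one would expect the cited proof in \cite{BS2021} to do as well. The key observation you flag at the end, that the rescaled integration variable $\tau/N$ (equivalently $t+\sigma/N$ with $\sigma\leq0$) never exceeds $t$, is indeed the only point where care is needed, and you handle it correctly.
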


%

\begin{Lemma}\label{lemma:4thmoment}
For fixed $u\in\R$ let $f:\R\times\R\rightarrow\R$ be a $\mathcal{B}(\R\times\R)-\mathcal{B}(\R)$ measurable function such that $f(u,\cdot) \in  L^1(\R)\cap L^4(\R)$ and let $L$ be a two sided L\'evy-process such that $\int_\R x^4\nu(dx)<\infty$. Then, $\int_{-\infty}^t f(u,t-s)L(ds)\in L^4$ and
\begin{align*}
E\left[\left(\int_{-\infty}^t f(u,t-s)L(ds)\right)^4\right]
=&\int_\R f(u,s)^4ds \int_\R x^4\nu(dx)+3\Sigma_L^2\left(\int_\R f(u,s)^2ds\right)^2\\
&+\mu_L^4 \left(\int_\R f(u,s)ds\right)^4+6\mu_L^2 \Sigma_L\left(\int_\R f(u,s)ds \right)^2 \int_\R f(u,s)^2ds\\
&+4\mu_L\!\! \left( \int_\R f(u,s)^3ds\int_\R x^3\nu(dx)\int_\R f(u,s)ds\right),
\end{align*}
where $\Sigma_L=\Sigma+\int_{\R}x^2\nu(dx)$ and $\mu_L=\gamma+\int_{|x|>1}x\nu(dx)$.
\end{Lemma}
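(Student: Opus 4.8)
The plan is to establish the $L^4$-integrability first and then compute the fourth moment by reducing to the known formula for the cumulants (equivalently, the characteristic triplet) of a stochastic integral against a L\'evy process. Since $f(u,\cdot)\in L^1(\R)\cap L^4(\R)\subset L^1(\R)\cap L^2(\R)$ and $L$ satisfies (\ref{eq:condlevyproc}) (because $\int_\R x^4\nu(dx)<\infty$ implies $\int_\R x^2\nu(dx)<\infty$), the integral $X:=\int_{-\infty}^t f(u,t-s)L(ds)$ exists in $L^2$ by the discussion following (\ref{eq:intexistencecond}). To upgrade to $L^4$, I would invoke the characteristic triplet $(\gamma_{int},\Sigma_{int},\nu_{int})$ of $X$ given in (\ref{equation:charactersitictripletint}): the integrated L\'evy measure satisfies $\int_\R y^4\,\nu_{int}(dy)=\int_\R\int_\R (f(u,t-s)x)^4\,\nu(dx)\,ds=\big(\int_\R f(u,s)^4ds\big)\big(\int_\R x^4\nu(dx)\big)<\infty$, so $X(t)$ is an infinitely divisible random variable whose L\'evy measure has a finite fourth moment, hence $X\in L^4$.

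Next I would compute $E[X^4]$ via cumulants. For an infinitely divisible random variable with triplet $(\gamma_{int},\Sigma_{int},\nu_{int})$ and finite fourth moment, the cumulants are $\kappa_1=E[X]=\gamma_{int}+\int_{|y|>1}y\,\nu_{int}(dy)$, $\kappa_2=\mathrm{Var}(X)=\Sigma_{int}+\int_\R y^2\,\nu_{int}(dy)$, $\kappa_3=\int_\R y^3\,\nu_{int}(dy)$, and $\kappa_4=\int_\R y^4\,\nu_{int}(dy)$. Plugging in (\ref{equation:charactersitictripletint}) and the substitution $y=f(u,t-s)x$, these become, with $\mu_L=\gamma+\int_{|x|>1}x\,\nu(dx)$ and $\Sigma_L=\Sigma+\int_\R x^2\nu(dx)$:
\begin{align*}
\kappa_1&=\mu_L\int_\R f(u,s)\,ds, & \kappa_2&=\Sigma_L\int_\R f(u,s)^2\,ds,\\
\kappa_3&=\Big(\int_\R x^3\nu(dx)\Big)\int_\R f(u,s)^3\,ds, & \kappa_4&=\Big(\int_\R x^4\nu(dx)\Big)\int_\R f(u,s)^4\,ds.
\end{align*}
The moment-cumulant relation $E[X^4]=\kappa_4+4\kappa_3\kappa_1+3\kappa_2^2+6\kappa_2\kappa_1^2+\kappa_1^4$ then yields exactly the claimed expression after substituting and collecting terms; this is a routine but bookkeeping-heavy verification that I would carry out term by term, matching each summand against the statement.

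The $\kappa_1$ computation deserves a word of care: the naive expression $\int_\R y\,\nu_{int}(dy)$ need not converge, and one must combine the compensator in $\gamma_{int}$ from (\ref{equation:charactersitictripletint}) with the truncation in $\int_{|y|>1}y\,\nu_{int}(dy)$; the cleanest route is to observe directly that $E[X]=\big(\gamma+\int_{|x|>1}x\nu(dx)\big)\int_\R f(u,s)\,ds$ by linearity of expectation for L\'evy integrals (using $E[L(1)]=\mu_L$ and that $f(u,\cdot)\in L^1(\R)$, so the Fubini-type interchange is justified under (\ref{eq:condlevyproc})), which is how the factor $\mu_L=\gamma+\int_{|x|>1}x\nu(dx)$ appears. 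Alternatively one could differentiate the cumulant generating function $\log\varphi_{X}(z)=\int_\R\Psi_L(f(u,s)z)\,ds$ four times at $z=0$, which is legitimate once fourth moments are known finite; this bypasses the truncation subtlety entirely and is perhaps the most transparent. The main obstacle is therefore not conceptual but organizational: keeping the truncation terms in the L\'evy--Khintchine representation straight when passing from $\Psi_L$ to the triplet of $X$, and then faithfully expanding the moment-cumulant formula so that every one of the five terms in the statement is accounted for with the correct combinatorial coefficient.
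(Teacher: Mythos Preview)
Your proposal is correct and follows essentially the same approach as the paper: the paper's proof is extremely terse (``The finiteness of the fourth moments follows e.g.\ from \cite{S2013}. The characteristic function \ldots\ is known from (\ref{eq:levykhintchine}). Usual calculations then give the above representation.''), and your argument via the integrated characteristic triplet (\ref{equation:charactersitictripletint}) and the moment--cumulant relation is precisely the ``usual calculation'' the paper has in mind. Your careful handling of the $\kappa_1$ truncation issue and the explicit verification of each term go beyond what the paper spells out, but the route is the same.
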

\begin{proof}
The finiteness of the fourth moments follows e.g. from \cite{S2013}. The characteristic function of $\int_{-\infty}^t f(u,t-s)L(ds)$ is known from (\ref{eq:levykhintchine}). Usual calculations then give the above representation. Note that (\ref{equation:charactersitictripletint}) connects the characteristic triplet of $\int_{-\infty}^t f(u,t-s)L(ds)$ with the characteristic triplet of $L$.
\end{proof}

\begin{Proposition}\label{proposition:car1locstat}
Let $Y_N$ be a sequence of time-varying L\'evy-driven CAR(1) processes as given in (\ref{eq:tvcar}). 
Then, $\tilde{Y}_{u}$ as given in (\ref{eq:locapproxcar}) is a locally stationary approximation of $Y_N$ for $p=2$, if
\begin{enumerate}[label={(\alph*)}]
\item the coefficient function $a$ is Lipschitz with constant L and
\item $\inf_{s\in\R}a(s)>0$.
\end{enumerate}
If additionally 
\begin{enumerate}[label={(\alph*)}]
\item[(c)] $\int_{\R}x^4\nu(dx)<\infty$,
\end{enumerate}
then $\tilde{Y}_{u}$ is also a locally stationary approximation of $Y_N$ for $p=4$.
\end{Proposition}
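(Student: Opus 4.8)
The plan is to verify, for $p=2$ and — under (c) — for $p=4$, the three requirements of Definition~\ref{definition:statapproxconttime}: ergodicity of each $\tilde{Y}_u$ together with $\sup_{u\in\R^+}\norm{\tilde{Y}_u(0)}_{L^p}<\infty$, the Lipschitz estimate $\norm{\tilde{Y}_u(t)-\tilde{Y}_v(t)}_{L^p}\le C\abs{u-v}$, and the approximation estimate $\norm{Y_N(t)-\tilde{Y}_t(Nt)}_{L^p}\le C/N$. Ergodicity of the stationary L\'evy-driven moving average $\tilde{Y}_u$ follows from \cite[Theorem~3.5]{FS2013}, exactly as in the proof of the previous proposition. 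Writing $\underline{a}:=\inf_{s\in\R}a(s)>0$, assumption (b) makes the kernel $r\mapsto e^{-a(u)r}$ lie in $L^1(\R_0^+)\cap L^4(\R_0^+)$ with norms bounded uniformly in $u$ (since $\int_0^\infty e^{-k a(u)r}\,dr=1/(k a(u))\le 1/(k\underline{a})$), so the second-moment identity for L\'evy stochastic integrals (for $p=2$) and Lemma~\ref{lemma:4thmoment} together with (c) (for $p=4$) yield $\sup_{u}\norm{\tilde{Y}_u(0)}_{L^p}<\infty$.

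The core of the proof will be two pointwise kernel bounds. By joint stationarity of $(\tilde{Y}_u,\tilde{Y}_v)$ it suffices to prove the Lipschitz estimate at $t=0$, where $\tilde{Y}_u(0)-\tilde{Y}_v(0)=\int_{-\infty}^0\bigl(e^{a(u)s}-e^{a(v)s}\bigr)L(ds)$; substituting $r=-s$ and combining the elementary inequality $\abs{e^{-x}-e^{-y}}\le\abs{x-y}\,e^{-\min(x,y)}$ (valid for $x,y\ge0$) with (a) and $\min\{a(u),a(v)\}\ge\underline{a}$ gives
\begin{gather*}
\bigl\lvert e^{-a(u)r}-e^{-a(v)r}\bigr\rvert\le L\,r\,e^{-\underline{a}r}\,\abs{u-v},\qquad r\ge0.
\end{gather*}
For the approximation estimate, the substitution $r=Nt-s$ rewrites the exponent of $g_N$ in (\ref{eq:tvcar}) as $\int_0^r a\bigl(t-\tfrac{\rho}{N}\bigr)\,d\rho$, so that with the same inequality and (a),
\begin{gather*}
\bigl\lvert g_N(Nt,Nt-s)-g(t,Nt-s)\bigr\rvert=\Bigl\lvert e^{-\int_0^r a(t-\rho/N)\,d\rho}-e^{-a(t)r}\Bigr\rvert\le\Bigl(\int_0^r\bigl\lvert a(t-\tfrac{\rho}{N})-a(t)\bigr\rvert\,d\rho\Bigr)e^{-\underline{a}r}\le\frac{L\,r^2}{2N}\,e^{-\underline{a}r}.
\end{gather*}
Both bounds are uniform in $t\in\R$ and $u,v\in\R^+$.

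It then remains to pass from these pointwise bounds to the $L^p$-norms. Since $Y_N(t)-\tilde{Y}_t(Nt)=\int_\R\bigl(g_N(Nt,Nt-s)-g(t,Nt-s)\bigr)L(ds)$ and $\tilde{Y}_u(0)-\tilde{Y}_v(0)$ are single L\'evy integrals of kernels $h\in L^1(\R)\cap L^4(\R)$, the identity $\norm{\int_\R f(s)L(ds)}_{L^2}^2=\Sigma_L\int_\R f(s)^2\,ds+\mu_L^2\bigl(\int_\R f(s)\,ds\bigr)^2$, applied to $f=h$, together with the two displays and $\int_0^\infty r^k e^{-\underline{a}r}\,dr<\infty$, immediately gives the $O(\abs{u-v})$ and $O(1/N)$ bounds required for $p=2$. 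For $p=4$ under (c) I would instead feed $h$ into Lemma~\ref{lemma:4thmoment}: each of its five summands is a monomial in the quantities $\int_0^\infty\abs{h(r)}^k\,dr$, $k\in\{1,2,3,4\}$, homogeneous of degree $4$ in $h$, and the pointwise bounds give $\int_0^\infty\abs{h(r)}^k\,dr\le C_k\abs{u-v}^k$ (resp.\ $C_kN^{-k}$), whence $E[(\tilde{Y}_u(0)-\tilde{Y}_v(0))^4]\le C\abs{u-v}^4$ and $E[(Y_N(t)-\tilde{Y}_t(Nt))^4]\le CN^{-4}$. I expect the only genuine difficulty to be bookkeeping: carrying out the change of variables for the time-varying exponent $\int_s^{Nt}a(\tau/N)\,d\tau$ correctly, and confirming that none of the lower-order cross terms in the fourth-moment formula degrades the rate — which works precisely because $\underline{a}>0$ makes each power $r^k e^{-\underline{a}r}$ integrable.
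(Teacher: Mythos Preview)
Your proposal is correct and follows essentially the same route as the paper's proof: ergodicity via \cite[Theorem~3.5]{FS2013}, uniform moment bounds from the exponential kernel, the Lipschitz estimate for $u\mapsto\tilde{Y}_u$ and the $1/N$ approximation bound both derived from a Lipschitz-type control of the exponential kernel, and the extension to $p=4$ via Lemma~\ref{lemma:4thmoment}. The only cosmetic difference is that you use the sharper inequality $\abs{e^{-x}-e^{-y}}\le\abs{x-y}\,e^{-\min(x,y)}$ directly to obtain pointwise kernel bounds, whereas the paper first factors out $e^{-\varepsilon s}$ (with $\varepsilon=\underline{a}$) and then applies the weaker global Lipschitz bound $\abs{e^{-x}-e^{-y}}\le\abs{x-y}$ on $\R_0^+$; your formulation is slightly cleaner and yields the same integrands $r^k e^{-\underline{a}r}$.
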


\begin{proof}
It is clear that $\tilde{Y}_{u}$ is stationary for all $u\in\R^+$. From \cite[Theorem 3.5]{FS2013} it also follows that $\tilde{Y}_{u}$ is ergodic. Let $\Sigma_L=\Sigma+\int_{\R}x^2\nu(dx)$ and $\mu_L=\gamma+\int_{|x|>1}x\nu(dx)$. Now, for $\varepsilon=\inf_{s\in\R}a(s)>0$ it holds
\begin{gather*}
\Lnorm{\tilde{Y}_{u}(t)}^2=\Sigma_L\int_\R g\left(u,t-s\right)^2ds+\mu_L^2\left(\int_\R g\left(u,t-s\right)ds\right)^2=\frac{\Sigma_L}{2a(u)}+\frac{\mu_L^2}{a(u)^2}\leq \frac{\Sigma_L}{2\varepsilon}+\frac{\mu_L^2}{\varepsilon^2}<\infty.
\end{gather*}
For $u,v\in\R$ we obtain
\begin{align*}
\norm{ \tilde{Y}_{u}(t)-\tilde{Y}_{v}(t)}_{L^2}^2&=\Sigma_L\norm{ g\left(u,t-\cdot\right)-g\left(v,t-\cdot\right)}_{L^2}^2+\mu_L^2\left(\int_\R g\left(u,t-\cdot\right)-g\left(v,t-\cdot\right)ds\right)^2\\
&=\Sigma_L\int_{0}^\infty \left(e^{-a(u)s}-e^{-a(v)s}\right)^2ds+\mu_L^2\left(\int_{0}^\infty e^{-a(u)s}-e^{-a(v)s}ds\right)^2=:P_1+P_2.
\end{align*}
Then,
\begin{align*}
P_1&=\Sigma_L\int_{0}^\infty e^{-s\varepsilon}\left(e^{-(a(u)s-\frac{\varepsilon}{2}s)}-e^{-(a(v)s-\frac{\varepsilon}{2}s)}\right)^2ds\\
&\leq
\Sigma_L\int_{0}^\infty e^{-s\varepsilon}s^2 (a(u)-a(v))^2ds\leq
C_1^2 (u-v)^2
\end{align*}
for a constant $C_1>0$, since $a(\cdot)$ is Lipschitz and $x\mapsto e^{-x}$ is Lipschitz on $\R_0^+$ with constant $1$. For $P_2$ it holds analogously 
\begin{gather*}
P_2\leq\mu_L^2\left(\int_{0}^\infty e^{-a(u)s}-e^{-a(v)s}ds\right)^2
\leq C_2^2 (u-v)^2
\end{gather*}
for a constant $C_2>0$. Therefore,
\begin{gather*}
\norm{ \tilde{Y}_{u}(t)-\tilde{Y}_{v}(t)}_{L^2}\leq \sqrt{C_1^2+C_2^2} (u-v).
\end{gather*}
Second,
\begin{align*}
\Lnorm{Y_N(t)-\tilde{Y}_t(Nt)}^2\!\!&=\Sigma_L\Lnorm{ g_N\left(Nt,Nt-\cdot\right)-g\left(t,Nt-\cdot\right)}^2\\
&\quad+\mu_L^2\left(\int_\R g_N\left(Nt,Nt-u\right)-g\left(t,Nt-u\right)du\right)^2\\
&=\Sigma_L\!\int_0^\infty\!\! \left(\!e^{-\int_{-s}^0a(\frac{\tau+Nt}{N})d\tau}\!-\!e^{-a(t)s}\right)^2\!\!ds\!+\!\mu_L^2\!\left(\int_\R \!e^{-\int_{-s}^0a(\frac{\tau+Nt}{N})d\tau}\!-\!e^{-a(t)s}ds\!\right)^2\\
&=:P_3+P_4.
\end{align*}
Then,
\begin{align*}
P_3=&\Sigma_L\int_0^\infty e^{-s\varepsilon}\left(e^{-\int_{-s}^0(a(\frac{\tau}{N}+t)-\frac{\varepsilon}{2})d\tau}-e^{-(a(t)-\frac{\varepsilon}{2})s}\right)^2ds\\
\leq&\Sigma_L\int_0^\infty \!\!e^{-s\varepsilon}\left(\!- \int_{-s}^0\!a\left(\frac{\tau}{N}+t\right)d\tau+a(t)s\right)^2ds
\leq\Sigma_L\!\int_0^\infty e^{-s\varepsilon} L^2\left( \int_{-s}^0\frac{|\tau|}{N}d\tau\right)^2ds
=\!\frac{C_3^2}{N^2}
\end{align*}
for a constant $C_3>0$, since $a(\cdot)$ is Lipschitz with constant $L$ and $x\mapsto e^{-x}$ is Lipschitz continuous on $\R_0^+$ with Lipschitz constant $1$. 
Analogously, 
\begin{gather*}
P_4\leq\mu_L^2\left(\int_\R \left|e^{-\int_{-s}^0a(\frac{\tau+Nt}{N})d\tau}-e^{-a(t)s}\right|ds\right)^2
\leq \frac{C_4^2}{N^2}
\end{gather*}
for a constant $C_4>0$.
Overall,
\begin{gather*}
\Lnorm{Y_N(t)-\tilde{Y}_t(Nt)}\leq \frac{1}{N}\sqrt{C_3^2+C_4^2}.
\end{gather*}
Since $\int_{\R}x^4\nu(dx)<\infty$, Lemma \ref{lemma:4thmoment} gives a closed form expression for $\lVert Y_N(t)-\tilde{Y}_t(Nt)\rVert_{L^4}^4$ and $\lVert \tilde{Y}_{u}(t)-\tilde{Y}_{v}(t)\rVert_{L^4}^4$. Then, calculations analogous to the steps for $p=2$ show that $\tilde{Y}_u$ is a locally stationary approximation of $Y_N$ for $p=4$.
\end{proof}

In the following theorem we apply the results from Section \ref{sec3} to time-varying L\'evy-driven CAR(1) processes and provide asymptotic results for different sample moments and observations. \\
In particular, these results provide a first step towards a method of moments based estimation procedure including consistency, as well as results on the asymptotic distribution. 
 
\begin{Theorem}\label{theorem:statisticforcar1}
Let $Y_N$ be a sequence of time-varying L\'evy-driven CAR(1) processes as given in (\ref{eq:tvcar}) and assume that
\begin{enumerate}[label={(\alph*)}]
\item[(A)] the conditions $(a)$ and $(b)$ from Proposition \ref{proposition:car1locstat} hold. 
\end{enumerate}
Then, for observations sampled according to \hyperref[observations:O1]{(O1)} or \hyperref[observations:O2]{(O2)}, we obtain for all $u\in\R^+$ and $k\in\N_0$
\begin{align}
\frac{\delta_N}{b_N} \sum_{i=-m_N}^{m_N}K\left(\frac{\tau_i^N-u}{b_N} \right)Y_N(\tau_i^N) &\overset{L^1}{\underset{N\rightarrow\infty}{\longrightarrow}}E[\tilde{Y}_u(0)]\text{ and for \hyperref[observations:O1]{(O1)} also}\label{equation:car1samplemean}\\
\frac{\delta_N}{b_N} \sum_{i=-m_N}^{m_N}K\left(\frac{\tau_i^N-u}{b_N} \right)Y_N(\tau_i^N)Y_N\left(\tau_i^N+\frac{k}{N}\right) &\overset{L^1}{\underset{N\rightarrow\infty}{\longrightarrow}}E[\tilde{Y}_u(0)\tilde{Y}_u(k)]\label{equation:car1samplecov}
\end{align}
with $\tilde{Y}_{u}$ as given in (\ref{eq:locapproxcar}). If we additionally assume that 
\begin{enumerate}[label={(\alph*)}]
\setcounter{enumi}{2}
\item[(B)] $\int_{|x|>1}|x|^{2+\varepsilon}\nu(dx)<\infty$ for some $\varepsilon>0$,
\end{enumerate}
then, the convergence in (\ref{equation:car1samplecov}) also holds for \hyperref[observations:O2]{(O2)}. Now, if in addition
\begin{enumerate}[label={(\alph*)}]
\setcounter{enumi}{2}
\item[(C)] $E[L(1)]=0$,
\item[(D)] $\sqrt{m_N}b_N\rightarrow0$, as $N\rightarrow\infty$ and 
\item[(E)] $\sigma(u)^2>0$, where $\sigma(u)^2=\begin{cases}\frac{1}{2}E[\tilde{Y}_u(0)^2]+\sum_{h=1}^\infty E[\tilde{Y}_u(0)\tilde{Y}_u(h)] , &\text{ if \hyperref[observations:O1]{(O1)} holds,}\\
E[\tilde{Y}_u(0)^2],&\text{ if \hyperref[observations:O2]{(O2)} holds,} \end{cases}$
\end{enumerate}
then $\sigma(u)^2<\infty$ and
\begin{gather}\label{equation:car1cltsamplemean}
\sqrt{\frac{\delta_N}{b_N}} \sum_{i=-m_N}^{m_N}K_{rect}\left(\frac{\tau_i^N-u}{b_N}\right)Y_N(\tau_i^N)\overset{d}{\underset{N\rightarrow\infty}{\longrightarrow}} \sigma(u)^2 Z,
\end{gather}
where $Z$ is a standard normally distributed random variable and $K_{rect}$ as defined in (\ref{equation:rectangularkernel}). Finally, if we additionally assume that
\begin{enumerate}[label={(\alph*)}]
\setcounter{enumi}{5}
\item[(F)] $\int_{|x|>1}|x|^{4+\varepsilon}\nu(dx)<\infty$ for some $\varepsilon>0$ \text{ and}
\item[(G)] 
$\tilde{\sigma}(u)^2>0$, where \\
$\tilde{\sigma}(u)^2=\begin{cases}\frac{1}{2}E[\tilde{Y}_u(0)^2\tilde{Y}_u(k)^2]+\sum_{h=1}^\infty E[\tilde{Y}_u(0)\tilde{Y}_u(k)\tilde{Y}_u(h)\tilde{Y}_u(h+k)] , &\text{ if \hyperref[observations:O1]{(O1)} holds,}\\
E[\tilde{Y}_u(0)^2\tilde{Y}_u(k)^2],&\text{ if \hyperref[observations:O2]{(O2)} holds,} \end{cases}$ 
\end{enumerate}
then $\tilde{\sigma}(u)^2<\infty$ and
\begin{align}\label{equation:car1cltsamplecov}
\begin{aligned}
&\sqrt{\frac{\delta_N}{b_N}}\sum_{i=-m_N}^{m_N}K_{rect}\left(\frac{\tau_i^N-u}{b_N}\right) \left(Y_N(\tau_i^N)Y_N\left(\tau_i^N+\frac{k}{N}\right)-Cov(\tilde{Y}_u(0)\tilde{Y}_u(k))\right)\\
&\qquad\qquad\qquad\qquad\qquad\qquad\qquad\overset{d}{\underset{N\rightarrow\infty}{\longrightarrow}} \tilde{\sigma}(u)^2 Z.
\end{aligned}
\end{align}
\end{Theorem}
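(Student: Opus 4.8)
The plan is to derive each of the four displays in the theorem from the general asymptotic results of Section~\ref{sec3}, applied to $Y_N$ together with its locally stationary approximation $\tilde{Y}_u$ of \eqref{eq:locapproxcar}, after preparing two facts. First, Proposition~\ref{proposition:car1locstat} shows that under (A) the family $\tilde{Y}_u$ is a locally stationary approximation of $Y_N$ for $p=2$, and that adding the fourth-moment hypothesis contained in (F) (which in particular forces $\int_\R x^4\nu(dx)<\infty$) upgrades this to $p=4$. Second, since the limiting kernel $g(u,t-s)=\mathbb{1}_{\{t-s\ge0\}}e^{-a(u)(t-s)}$ decays exponentially with rate bounded below by $\inf_s a(s)>0$, the results of \cite{CSS2020} give that every $\tilde{Y}_u$ is $\theta$-weakly dependent with $\theta$-coefficients $\theta_{\tilde{Y}_u}(h)$ that are exponentially small in $h$; hence \hyperlink{DD}{DD($\varepsilon$)} and the summability conditions entering Lemma~\ref{lemma:mixingaleandthetaweak} hold for every $\varepsilon>0$. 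Finally, as $g(u,\cdot)\in L^1(\R)\cap L^q(\R)$ for all $q\ge1$ and $a$ is bounded below, the moment formulae \eqref{equation:charactersitictripletint} (and Lemma~\ref{lemma:4thmoment}) yield $\tilde{Y}_u(0)\in L^{2+\varepsilon}$ under (B) and $\tilde{Y}_u(0)\in L^{4+\varepsilon}$ under (F).

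The two laws of large numbers then follow by direct invocation. The sample-mean convergence \eqref{equation:car1samplemean} is Theorem~\ref{theorem:lawoflargenumbersO1} under \hyperref[observations:O1]{(O1)} and Theorem~\ref{theorem:lawoflargenumbersdiscreteobs}(a) under \hyperref[observations:O2]{(O2)}, both of which use only the $p=2$ approximation and, for \hyperref[observations:O2]{(O2)}, the $\theta$-weak dependence just recorded. For the sample autocovariance \eqref{equation:car1samplecov} I would observe that $g(x_0,\ldots,x_k)=x_0x_k$ lies in $\mathcal{L}_{k+1}(1,1)$ (Remark~\ref{remark:inheritancepolynomials}); Proposition~\ref{proposition:inheritanceproperties}(a) with $M=1$ (so $\tilde p=2$) then makes $g(\tilde{Z}_u^k)$ a locally stationary approximation of $g(Z_N^k)$ for $p=1$, and, after the constant index shift $\tau_i^N\mapsto\tau_i^N+k/N$ (harmless since $Nb_N\to\infty$ and $\tilde{Y}_u$ is stationary), \eqref{equation:car1samplecov} under \hyperref[observations:O1]{(O1)} is again Theorem~\ref{theorem:lawoflargenumbersO1}. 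Under \hyperref[observations:O2]{(O2)} one needs in addition the transformed process to be $\theta$-weakly dependent: this is Proposition~\ref{proposition:inheritanceproperties}(b)--(c), whose moment requirement $E[|\tilde{Y}_u(0)|^{2+\varepsilon}]<\infty$ is exactly (B), giving $\theta$-coefficients of order $\mathcal{O}(\theta_{\tilde{Y}_u}(h)^{\varepsilon/(1+\varepsilon)})$, still exponentially small, so Theorem~\ref{theorem:lawoflargenumbersdiscreteobs}(a) applies.

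For the two central limit theorems I would check the hypotheses of Theorem~\ref{theorem:centrallimit}. Display \eqref{equation:car1cltsamplemean} is immediate: (C) centres $\tilde{Y}_u$, (A) supplies the $p=2$ approximation, (B) gives $\tilde{Y}_u(0)\in L^{2+\varepsilon}$ and hence, with the exponential decay, \hyperlink{DD}{DD($\varepsilon$)}; (D) is the bandwidth condition $\sqrt{m_N}b_N\to0$, (E) is positivity of $\sigma(u)^2$, and its finiteness is automatic from the decay of $\theta_{\tilde{Y}_u}$ and \cite[Remark 3.3]{CS2018}, as in the proof of Theorem~\ref{theorem:centrallimit}. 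For the autocovariance CLT \eqref{equation:car1cltsamplecov} I would apply Theorem~\ref{theorem:centrallimit} to the centred transformed sequence $g(Z_N^k)-E[\tilde{Y}_u(0)\tilde{Y}_u(k)]$: Proposition~\ref{proposition:inheritanceproperties}(a) with $M=1$ now needs the $p=4$ approximation of $Y_N$ --- this is where (F) enters --- and returns a $p=2$ locally stationary approximation of the localized sample autocovariance; the required $(2+\varepsilon')$-th moment of $\tilde{Y}_u(0)\tilde{Y}_u(k)$ follows from $\tilde{Y}_u(0)\in L^{4+\varepsilon}$ by Cauchy--Schwarz with $\varepsilon'=\varepsilon/2$; $\theta$-weak dependence of the transformed process with exponentially small coefficients is again Proposition~\ref{proposition:inheritanceproperties}(b)--(c); and (D), (G) provide the bandwidth and variance conditions, $\tilde{\sigma}(u)^2<\infty$ being handled as before.

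I expect the only real difficulty to be bookkeeping rather than any new idea: one must track at each step which $p$ and which integrability exponent $\varepsilon$ is available, make sure that exponent survives the quadratic transformation (the passage from $2+\varepsilon$ to $2+\varepsilon/2$ via Cauchy--Schwarz being the delicate point), and confirm that the $\theta$-coefficients inherited through Proposition~\ref{proposition:inheritanceproperties}(c) are still summable enough for \hyperlink{DD}{DD($\cdot$)} --- all of which the exponential decay of the CAR(1) kernel makes routine.
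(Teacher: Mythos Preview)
Your proposal is correct and follows essentially the same route as the paper's own proof: Proposition~\ref{proposition:car1locstat} for the $p=2$ (and, under (F), $p=4$) locally stationary approximation, exponential decay of the $\theta$-coefficients of $\tilde{Y}_u$ via \cite{CSS2020}, inheritance through Proposition~\ref{proposition:inheritanceproperties} for the covariance transformation $g(x_0,\ldots,x_k)=x_0x_k\in\mathcal{L}_{k+1}(1,1)$, and then direct invocation of Theorems~\ref{theorem:lawoflargenumbersO1}, \ref{theorem:lawoflargenumbersdiscreteobs}(a) and \ref{theorem:centrallimit}. Your bookkeeping of the moment exponents (in particular the Cauchy--Schwarz passage from $\tilde{Y}_u(0)\in L^{4+\varepsilon}$ to $\tilde{Y}_u(0)\tilde{Y}_u(k)\in L^{2+\varepsilon/2}$) and the remark on the harmless index shift are even slightly more explicit than the paper's argument.
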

\begin{proof}
From Proposition \ref{proposition:car1locstat} it follows that $\tilde{Y}_{u}$ is a locally stationary approximation of $Y_N$ for $p=2$. If \hyperref[observations:O1]{(O1)} holds, Theorem \ref{theorem:lawoflargenumbersO1} implies the convergence in (\ref{equation:car1samplemean}). Analogous to \cite[Corollary 3.4]{CS2018} and since $g(u,\cdot)\in L^{2}(\R)$ for all $u\in\R$, it follows that $\tilde{Y}_u$ is $\theta$-weakly dependent with $\theta$-coefficients $\theta(h)$. Therefore, if \hyperref[observations:O2]{(O2)} holds, we obtain the convergence in (\ref{equation:car1samplemean}) from part (a) of Theorem \ref{theorem:lawoflargenumbersdiscreteobs}.\\ 
Define $g:\R^{k+1}\rightarrow\R$ as $g(x_1,\ldots,x_{k+1})=x_1x_{k+1}$, $\tilde{Z}_u^k(t)$ and $Z_N^k(t)$ as in Section \ref{sec2-3}. As described in Remark \ref{remark:inheritancepolynomials} we have $g\in\mathcal{L}_{k+1}(1,1)$. From Proposition \ref{proposition:inheritanceproperties} part (a) it follows that $g(\tilde{Z}_u^k(t))=\tilde{Y}_u(t)\tilde{Y}_u(t-k)$ is a locally stationary approximation of $g(Z_N^k(t))=Y_N(t)Y_N(t-\frac{k}{N})$ for $p=1$. If \hyperref[observations:O1]{(O1)} holds, the convergence in (\ref{equation:car1samplecov}) then follows again from Theorem \ref{theorem:lawoflargenumbersO1}. In order to prove (\ref{equation:car1samplecov}) for \hyperref[observations:O2]{(O2)}, we first note that $E[|Y_u(0)|^{2+\varepsilon}]<\infty$, since $\int_{|x|>1}|x|^{2+\varepsilon}\nu(dx)<\infty$ and $g(u,\cdot)\in L^{2+\varepsilon}(\R)$ for all $u\in\R^+$. Proposition \ref{proposition:inheritanceproperties} part (c) implies that $g(\tilde{Z}_u^k(t))=\tilde{Y}_u(t)\tilde{Y}_u(t-k)$ is $\theta$-weakly dependent. Hence, part (a) of Theorem \ref{theorem:lawoflargenumbersdiscreteobs} gives the stated convergence in (\ref{equation:car1samplecov}) for \hyperref[observations:O2]{(O2)}.
 \\
Analogous to the proof of \cite[Theorem 3.36]{CSS2020}, one can show that $\tilde{Y}_u$ has exponentially decaying $\theta$-coefficients, such that \hyperlink{DD}{DD($\varepsilon$)} holds for any $\varepsilon>0$. Theorem \ref{theorem:centrallimit} then implies the convergence as stated in (\ref{equation:car1cltsamplemean}) for both \hyperref[observations:O1]{(O1)} and \hyperref[observations:O2]{(O2)}. \\
Since $\int_{|x|>1}|x|^{4+\varepsilon}\nu(dx)<\infty$ and $g(u,\cdot)\in L^{4+\varepsilon}(\R)$ for all $u\in\R^+$, we have $E[|\tilde{Y}_u(0)|^{4+\varepsilon}]<\infty$ for all $u\in\R^+$.
From Proposition \ref{proposition:car1locstat} it follows that $\tilde{Y}_{u}$ is a locally stationary approximation of $Y_N$ for $p=4$, such that part (c) of Proposition \ref{proposition:inheritanceproperties} implies that $g(\tilde{Z}_u^k(t))$ is a locally stationary approximation of $g(Z_N^k(t))$ for $p=2$, where $g(\tilde{Z}_u^k(t))=\tilde{Y}_u(t)\tilde{Y}_u(t-k)$ is $\theta$-weakly dependent with $\theta$-coefficients $\theta_{\tilde{Z}_u^k}(h)\in\mathcal{O}\left(\theta(h)^{\frac{\varepsilon}{1+\varepsilon}}\right)$. Since $\tilde{Y}_u$ has exponentially decaying $\theta$-coefficients $\theta(h)$, $\theta_{\tilde{Z}_u^k}(h)$ satisfies \hyperlink{DD}{DD($\varepsilon$)} for any $\varepsilon>0$. Theorem \ref{theorem:centrallimit} then implies (\ref{equation:car1cltsamplecov}) for \hyperref[observations:O1]{(O1)} and \hyperref[observations:O2]{(O2)}.
\end{proof}

\subsection{L\'evy-driven state space and CARMA(p,q) models}
\label{sec5-2}
Before we give the definition of general time-varying L\'evy-driven state space models, we introduce time-varying L\'evy-driven CARMA processes (see e.g. \cite{B2014} for an introduction to time-invariant L\'evy-driven CARMA processes).
For positive integers $p>q$, let $a_i(t)$, $i=1,\ldots,p$ and $b_j(t)$, $j=0,\ldots,q$ be continuous real functions. Then, the polynomials 
\begin{align*}
p(t,z)&=z^p+a_1(t)z^{p-1}+\ldots+a_{p-1}(t)z+a_p(t)\text{ and}\\
q(t,z)&=b_0(t)+b_1(t)z+\ldots+b_{q-1}(t)z^{q-1}+b_q(t)z^q,
\end{align*}
are respectively called autoregressive (AR) and moving average (MA) polynomials. A time-varying L\'evy-driven CARMA process is then defined as the solution to the formal differential equation
\begin{gather}\label{eq:tvCARMAformaldiff}
p(t,D)Y(t)=q(t,D)DL(t),
\end{gather}
where $D$ denotes the differential operator with respect to time. 
Since L\'evy processes are in general not differentiable, we interpret (\ref{eq:tvCARMAformaldiff}) as usual to be equivalent to the state space representation
\begin{align}\label{eq:tvCARMAstatespace}
\begin{aligned}
Y(t)&=B(t)'X(t)\text{ and}\\
dX(t)&=A(t)X(t)dt+CL(dt),
\end{aligned}
\end{align}
where $A(t)$ is in companion form, i.e.
\begin{align*}
A(t)&=\left( \begin{array}{cccc}
0 & 1 & \ldots & 0 \\
\vdots & ~ & \ddots & \vdots \\
0 & ~ & ~ & 1 \\
-a_p(t) & -a_{p-1}(t) & \ldots & -a_1(t) \\
\end{array}\right)\in M_{p\times p}(\R)\text{ and}\\
B(t)&=\left( \begin{array}{c}
b_0(t)  \\
 b_1(t)\\
\vdots \\
b_{p-1}(t)\\
\end{array}\right)\in M_{p\times 1}(\R), \qquad 
C=\left( \begin{array}{c}
0  \\
\vdots\\
0 \\
1\\
\end{array}\right)\in M_{p\times 1}(\R),\quad t\in\R.
\end{align*}
For a discussion on state space representations of the form of (\ref{eq:tvCARMAstatespace}) with a Brownian motion as driving noise we refer to \cite[Section 2.1.1]{S2010}. As explained in \cite{BS2021}, there exists a solution to the above state space representation which is given by
\begin{gather}\label{eq:tvCARMAsolution}
X(t)=\int_{-\infty}^t\Psi(t,s)C L(ds)\text{ and } Y(t)=B(t)' \int_{-\infty}^t\Psi(t,s)C L(ds)
\end{gather}
for $t\in\R$, provided that the integrals exist in $L^2$. The matrix $\Psi(t,t_0)$ for $t>t_0$ is the unique solution to the homogeneous initial value problem (IVP)
\begin{gather}\label{eq:ivpA}
\begin{gathered}
\frac{d}{dt}\Psi(t,t_0)=A(t)\Psi(t,t_0)\quad \text{ with initial condition}\quad\Psi(t_0,t_0)=\mathbf{1}_p. 
\end{gathered}
\end{gather}
In particular, it holds $\Psi(t,t_0)=\Psi(t,s)\Psi(s,t_0)$ for $t_0<s<t$. For an extensive collection of results on the IVP (\ref{eq:ivpA}) we refer to \cite[Section 3 and 4]{B1970}. 

\begin{Definition}
Let $Y=\{Y(t),t\in\R\}$ be a solution to the state space representation (\ref{eq:tvCARMAstatespace}) in the form of (\ref{eq:tvCARMAsolution}). Then, $Y$ is referred to as a time-varying L\'evy-driven CARMA(p,q) process.
\end{Definition}

The IVP (\ref{eq:ivpA}) is solved by the Peano-Baker series 
\begin{gather*}
\Psi(t,t_0)=\mathbf{1}_p+\int_{t_0}^tA(\tau_1)d\tau_1+\int_{t_0}^tA(\tau_1)\int_{t_0}^{\tau_1}A(\tau_2)d\tau_2d\tau_1+\ldots=\sum_{n=0}^\infty \mathcal{I}_n,
\end{gather*}
where $\mathcal{I}_0=\mathbf{1}_p$ and $\mathcal{I}_n=\int_{t_0}^tA(\tau_1)\int_{t_0}^{\tau_1}A(\tau_2)\ldots\int_{t_0}^{\tau_{n-1}}A(\tau_n)d\tau_n\ldots d\tau_2d\tau_1$. Considering \cite[Remark 2]{BS2011b}, the condition
\begin{gather}\label{eq:commcond}
\left[A(t),\int_{t_0}^tA(s)ds\right]=0
\end{gather}
for all $t>t_0$, i.e. if $A(t)$ and $\int_{t_0}^tA(s)ds$ commute, ensures that the Peano-Baker Series can be expressed as
\begin{gather*}
\Psi(t,t_0)=\sum_{n=0}^\infty \frac{1}{n!}\left(\int_{t_0}^tA(\tau)d\tau\right)^n=e^{\int_{t_0}^tA(\tau)d\tau}.
\end{gather*}
If (\ref{eq:commcond}) holds, for $t>t_0$ the solution (\ref{eq:tvCARMAsolution}) of the state space representation (\ref{eq:tvCARMAstatespace}) simplifies to
\begin{align*}
\begin{aligned}
X(t)&=e^{\int_{t_0}^tA(\tau)d\tau}X(t_0)+\int_{t_0}^te^{\int_{s}^tA(\tau)d\tau} C L(ds) =\int_{-\infty}^te^{\int_{s}^tA(\tau)d\tau} C L(ds) \quad\text{ and}\\ 
Y(t)&=B(t)'e^{\int_{t_0}^tA(\tau)d\tau}X(t_0)+\int_{t_0}^tB(t)'e^{\int_{s}^tA(\tau)d\tau} C L(ds) =\int_{-\infty}^tB(t)'e^{\int_{s}^tA(\tau)d\tau} C L(ds).
\end{aligned}
\end{align*}

\begin{Remark}
If $[A(t),A(s)]=0$ for all $s,t\in\R$, i.e. if $A(s)$ and $A(t)$ commute, the commutativity assumption (\ref{eq:commcond}) clearly holds. As a matter of fact, the statements are even equivalent (see \cite[Exercise 4.8]{R1996}).
Further discussions on the commutativity condition (\ref{eq:commcond}) can be found for instance in \cite{WS1976}.
\end{Remark}

In the following we consider matrix functions $A(t)$, which are not necessarily in companion form and additionally allow $C$ to be time-varying, i.e. $C(t)\in M_{p\times1}(\R)$, $t\in\R$. This leads to the observation and state equation
\begin{align}\label{eq:tvLDstatespace}
\begin{aligned}
Y(t)&=B(t)'X(t) \quad\text{ and}\\
dX(t)&=A(t)X(t)dt+C(t)L(dt),
\end{aligned}
\end{align}
where $A(t)\in M_{p\times p}(\R)$ and $B(t),C(t)\in M_{p\times1}(\R)$, $t\in\R$ are arbitrary continuous coefficient functions. A solution of (\ref{eq:tvLDstatespace}) is given by (see \cite[Section 4]{BS2021})
\begin{gather}\label{eq:tvLDstatespacesolution}
X(t)=\int_{-\infty}^t\Psi(t,s)C(s)L(ds)\text{ and }Y(t)=B(t)\int_{-\infty}^t\Psi(t,s)C(s)L(ds),
\end{gather} 
provided that the integrals exist in $L^2$. Again, $\Psi(t,t_0)$ is the unique matrix solution of the IVP (\ref{eq:ivpA}).

\begin{Definition}
Let $Y=\{Y(t),t\in\R\}$ be a solution to the state space representation (\ref{eq:tvLDstatespace}) in the form of (\ref{eq:tvLDstatespacesolution}). Then, $Y$ is referred to as a time-varying L\'evy-driven state space process.
\end{Definition}

For an initial time $t_0\in\R$ a time-varying L\'evy-driven state space process can be expressed as
\begin{gather*}
X(t)=\Psi(t,t_0)\left(X(t_0)+\int_{t_0}^t\Psi(s,t_0)^{-1}C(s)L(ds) \right).
\end{gather*}

\begin{Remark}
In \cite[Corollary 3.4]{SS2012}, the authors showed equivalence of the class of time-invariant L\'evy-driven CARMA processes and time-invariant L\'evy-driven state space processes. The relation of the above classes in the time-varying setting has recently been investigated in \cite{BS2021}. While the classes of time-varying L\'evy-driven CARMA processes and time-varying L\'evy-driven state space process are not necessarily equivalent for non-continuous coefficient functions (see \cite[Proposition 4.6]{BS2021}), the authors showed that a state space process of the form of (\ref{eq:tvLDstatespace}), whose coefficient functions $A$ and $C$ are sufficiently often differentiable, is equivalent to a time-varying L\'evy-driven CARMA process of the form (\ref{eq:tvCARMAsolution}) if and only if it is instantaneously controllable \cite[Proposition 4.8]{BS2021}. For more information on instantaneous controllability we refer to \cite[Chapter 9 and 10]{R1996}.
\end{Remark}

The following definition from \cite[Chapter 6]{R1996} directly ensures the existence of the stochastic integrals in (\ref{eq:tvCARMAsolution}) and (\ref{eq:tvLDstatespacesolution}).

\begin{Definition}
A linear state space model of the form of (\ref{eq:tvLDstatespace}) is called uniformly exponentially stable if there exists $\gamma>0$ and $\lambda>0$ such that
\begin{gather*}
\norm{\Psi(t,t_0)}\leq\gamma e^{-\lambda(t-t_0)}
\end{gather*}
for all $t>t_0$.
\end{Definition}

\begin{Remark}
If a linear state space model of the form (\ref{eq:tvCARMAstatespace}) or (\ref{eq:tvLDstatespace}) is uniformly exponential stable, then the integrals in (\ref{eq:tvCARMAsolution}) or (\ref{eq:tvLDstatespacesolution}) (if $C$ is additionally uniformly bounded) are well-defined, since we assumed that additionally (\ref{eq:condlevyproc}) holds.  
\end{Remark}

\subsection{Time-varying L\'evy-driven state space models}
\label{sec5-3}

First, we review conditions from \cite{BS2021} that are sufficient for a sequence of time-varying L\'evy-driven state space models to be locally stationary in the sense of Definition \ref{definition:localstatconttime}. Then we investigate suitable conditions for these models to possess a locally stationary approximation. In fact, it will turn out that both conditions show high similarity, as we have already seen it in Section \ref{sec5-1}\\
To be able to establish local stationarity, we consider a sequence $Y_N$ of time-varying L\'evy-driven state space models, i.e.
\begin{align}\label{eq:seqtvLDstatespacesolution}
\begin{aligned}
Y_N(t)&=\int_\R g_N(Nt,Nt-s)L(ds), \text{ with kernel function}\\
g_N(Nt,Nt-s)&= \mathbb{1}_{\{Nt-s\geq0\}} B(t)' \Psi_{N,t}(0,-(Nt-s))C\left(\frac{-(Nt-s)}{N}+t \right),
\end{aligned}
\end{align}
where $A(t)\in M_{p\times p}(\R)$ and $B(t),C(t)\in M_{p\times1}(\R)$, $t\in\R$,  are arbitrary continuous coefficient functions and $\Psi_{N,t}(0,-(Nt-u))$ is the solution to the IVP
\begin{gather*}
\Psi_{N,t}(s_0,s_0)= \mathbf{1}_p,\qquad
\frac{d}{ds}\Psi_{N,t}(s,s_0)=A\left(\frac{s}{N}+t\right)\Psi_{N,t}(s,s_0).
\end{gather*}
We assume that $A(u)$, $u\in\R^+$ has eigenvalues with strictly negative real part and 
consider as corresponding locally stationary approximation $\tilde{Y}_{u}$ the process
\begin{align}\label{eq:seqlimLDstatespacesolution}
\begin{aligned}
\tilde{Y}_{u}(t)&= \int_\R g(u,t-s)L(ds), \text{ with kernel function}\\
g(u,t-s)&= \mathbb{1}_{\{t-s\geq0\}} B(u)' e^{A(u)(t-s)}C\left(u \right),
\end{aligned}
\end{align}
where $e^{A(u)(t-s)}$ is the solution to the IVP
\begin{gather*}
\Psi_{u}(s_0,s_0)=\mathbf{1}_p,\qquad
\frac{d}{ds}\Psi_{u}(s,s_0)=A\left(u\right)\Psi_{u}(s,s_0).
\end{gather*}

\begin{Proposition}[{\cite[Proposition 4.11]{BS2021}}]
Let $Y_N$ be a sequence of time-varying L\'evy-driven state space models as given in (\ref{eq:seqtvLDstatespacesolution}). 
Then, $Y_N$ is locally stationary, if 
\begin{enumerate}[label={(\alph*)}]
\item the coefficient functions $A,B$ and $C$ are continuous,
\item $\norm{B(s)}<\infty$ for all $s\in\R$, $\sup_{s\in\R}\norm{C(s)}<\infty$ and
\item $\norm{\Psi_{N,t}(0,s)}\leq F_t(s)$ for some function $F_t\in L^2(\R^-)$ and all $N\in\N$, $t\in\R$.
\end{enumerate}
In particular, $(c)$ holds if $Y_N$ is uniformly exponential stable with the same $\gamma$ and $\lambda$ for all $N\in \N$, since then
\begin{gather*}
\norm{\Psi_{N,t}(0,s)}\leq F_t(s)=\gamma e^{\lambda s}\in L^2(\R^-).
\end{gather*}
The corresponding limiting local kernel function is given by $g(u,s)$ as defined in (\ref{eq:seqlimLDstatespacesolution}).
\end{Proposition}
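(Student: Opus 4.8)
The plan is to verify the three conditions of Definition~\ref{definition:localstatconttime}: condition~(a), concerning $L$, holds by the standing assumptions of Section~\ref{sec5}, so it remains to show that each kernel $g_N(Nt,\cdot)$ lies in $L^2(\R)$, that the candidate limit $g(t,\cdot)$ from~(\ref{eq:seqlimLDstatespacesolution}) lies in $L^2(\R)$ with $t\mapsto g(t,\cdot)$ continuous, and that $g_N(Nt,\cdot)\to g(t,\cdot)$ in $L^2(\R)$ as $N\to\infty$. Writing the second kernel argument as $x=Nt-s$, one has
\begin{gather*}
g_N(Nt,x)=\mathbb{1}_{\{x\geq0\}}\,B(t)'\,\Psi_{N,t}(0,-x)\,C\!\left(-\tfrac{x}{N}+t\right),\qquad
g(t,x)=\mathbb{1}_{\{x\geq0\}}\,B(t)'\,e^{A(t)x}\,C(t),
\end{gather*}
so the whole argument reduces to estimates on these two functions of $x\geq0$.

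First I would record the basic domination. By~(b) and~(c),
\begin{gather*}
|g_N(Nt,x)|\leq\|B(t)\|\Big(\sup_{s\in\R}\|C(s)\|\Big)F_t(-x)\,\mathbb{1}_{\{x\geq0\}}=:H_t(x),
\end{gather*}
and $H_t\in L^2(\R)$ because $F_t\in L^2(\R^-)$; hence $g_N(Nt,\cdot)\in L^2(\R)$. Since $A(t)$ has all eigenvalues with strictly negative real part, $\|e^{A(t)x}\|\leq c_t e^{-\lambda_t x}$ for suitable $c_t,\lambda_t>0$, so $g(t,\cdot)\in L^2(\R)$; moreover, once the pointwise convergence below is in hand, $\|e^{A(t)x}\|=\lim_{N\to\infty}\|\Psi_{N,t}(0,-x)\|\leq F_t(-x)$, so $g(t,\cdot)$ is dominated by the same $H_t$. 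For the continuity of $t\mapsto g(t,\cdot)$ in $L^2(\R)$ I would combine the joint continuity of $(u,x)\mapsto e^{A(u)x}$ with the continuity of $B$ and $C$ to get uniform convergence of the integrand on each bounded $x$-interval, and the continuity of the spectral abscissa $u\mapsto\max\{\operatorname{Re}\lambda:\lambda\in\operatorname{spec}(A(u))\}$ to get an exponential tail bound on $\|e^{A(u)x}\|$ that is uniform for $u$ in a compact neighbourhood of $t$; splitting $\int_0^\infty$ at a large $R$ and letting first $u\to t$ and then $R\to\infty$ then yields the claim. This last point is the only genuinely delicate step, since the $L^2$-envelope $H_t$ available for the $g_N(Nt,\cdot)$ itself depends on $t$, so a truly uniform-on-compacta exponential bound for $\|e^{A(u)x}\|$ has to be extracted before the limit can be taken.

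The core step is the $L^2$-convergence $g_N(Nt,\cdot)\to g(t,\cdot)$, which I would prove pointwise plus by dominated convergence. For fixed $x\geq0$, $C(-x/N+t)\to C(t)$ by continuity, and $\Psi_{N,t}(0,-x)\to e^{A(t)x}$: setting $\Delta_N(s)=\Psi_{N,t}(s,-x)-e^{A(t)(s+x)}$ on $s\in[-x,0]$, one has
\begin{gather*}
\Delta_N'(s)=A\!\left(\tfrac{s}{N}+t\right)\Delta_N(s)+\Big(A\!\left(\tfrac{s}{N}+t\right)-A(t)\Big)e^{A(t)(s+x)},\qquad \Delta_N(-x)=0,
\end{gather*}
so Gr\"onwall's inequality bounds $\|\Delta_N(0)\|$ by
\begin{gather*}
\left(\int_{-x}^0\Big\|A\!\left(\tfrac{r}{N}+t\right)-A(t)\Big\|\,\|e^{A(t)(r+x)}\|\,dr\right)\exp\!\left(\int_{-x}^0\Big\|A\!\left(\tfrac{r}{N}+t\right)\Big\|\,dr\right),
\end{gather*}
in which the integral tends to $0$ by the continuity of $A$ and dominated convergence on $[-x,0]$ (note $r/N+t\in[t-x,t]$ for every $N\geq1$, a set on which $A$ is bounded), while the exponential factor stays bounded. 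Hence $g_N(Nt,x)-g(t,x)\to0$ for every $x$, and since $|g_N(Nt,x)-g(t,x)|\leq 2H_t(x)$ with $H_t\in L^2(\R)$, dominated convergence yields $\|g_N(Nt,\cdot)-g(t,\cdot)\|_{L^2}\to0$. Putting these facts together shows that $Y_N$ admits the representation~(\ref{eq:seqtvLDstatespacesolution}) with the three properties of Definition~\ref{definition:localstatconttime} and limiting local kernel $g$ from~(\ref{eq:seqlimLDstatespacesolution}), i.e.\ $Y_N$ is locally stationary; the ``in particular'' assertion is then immediate, since uniform exponential stability with common constants $\gamma,\lambda$ gives $\|\Psi_{N,t}(0,s)\|\leq\gamma e^{\lambda s}$ for all $N\in\N$ and $t\in\R$, and $s\mapsto\gamma e^{\lambda s}\in L^2(\R^-)$ independently of $t$.
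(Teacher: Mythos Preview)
The paper does not actually supply a proof of this proposition: it is quoted verbatim from \cite[Proposition~4.11]{BS2021} and stated without argument, so there is no ``paper's proof'' against which to compare your proposal. What I can do is assess your argument on its own terms.

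Your overall strategy is sound and the core steps are correct. The domination $|g_N(Nt,x)|\leq H_t(x)$ with $H_t\in L^2$ follows exactly as you say from (b) and (c); the Gr\"onwall argument for $\Psi_{N,t}(0,-x)\to e^{A(t)x}$ is set up correctly (your observation that $r/N+t\in[t-x,t]$ for all $N\geq1$ gives the needed uniform bound on $\|A(r/N+t)\|$); and the passage from pointwise to $L^2$ convergence via dominated convergence with envelope $2H_t$ is clean. I would emphasise that your second argument for $g(t,\cdot)\in L^2$---taking the limit in $\|\Psi_{N,t}(0,-x)\|\leq F_t(-x)$ to get $\|e^{A(t)x}\|\leq F_t(-x)$---is the one to rely on, since the eigenvalue assumption on $A(u)$ is only stated in the paper for $u\in\R^+$, whereas Definition~\ref{definition:localstatconttime}(c) requires $g(t,\cdot)\in L^2$ for all $t\in\R$.

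The one place where your justification is a little thin is the continuity of $t\mapsto g(t,\cdot)$ in $L^2$. Continuity of the spectral abscissa alone does not control $\|e^{A(u)x}\|$, because the constant in front of the exponential can blow up when the Jordan structure degenerates. What you actually need is that for $u$ in a compact neighbourhood of $t$ there exist uniform constants $c,\lambda>0$ with $\|e^{A(u)x}\|\leq c e^{-\lambda x}$; this is true and is most cleanly obtained via a Lyapunov argument (choose $P\succ0$ with $A(t)'P+PA(t)\prec0$, then by continuity $A(u)'P+PA(u)\prec0$ for $u$ near $t$, giving uniform exponential decay). With that correction your splitting argument at a large $R$ goes through. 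Apart from this point, the proof is complete and correct.
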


\begin{Proposition}\label{proposition:LStvldsp}
Let $Y_N$ be a sequence of time-varying L\'evy-driven state space models as given in (\ref{eq:seqtvLDstatespacesolution}). 
Then, $\tilde{Y}_{u}$ as given in (\ref{eq:seqlimLDstatespacesolution}) is a locally stationary approximation of $Y_N$ for $p=2$, if 
\begin{enumerate}[label={(\alph*)}]
\item the coefficient functions $A,B$ and $C$ are Lipschitz with constants $L_A,L_B$ and $L_C$, respectively,
\item $\sup_{s\in\R}\norm{B(s)}<\infty$ and $\sup_{s\in\R}\norm{C(s)}<\infty$, 
\item $\norm{  \Psi_{N,t}(0,s)}\leq F(s)$ such that $s\mapsto |s|F(s) \in L^1(\R^-)\cap L^2(\R^-)$ for all $N\in\N$ and $t\in \R$,
\item $\norm{\Psi_{N,t}(0,s) -\Psi_{t}(0,s)}\leq \frac{\tilde F(s)}{N}$ such that $\tilde F(s)\in L^1(\R^-)\cap L^2(\R^-)$ for all $t\in\R$,
\item $\norm{\Psi_{u}(0,s)}\leq G(s)$ such that $G(s) \in L^1(\R^-)\cap L^2(\R^-)$ for all $u\in \R^+$,
\item $\norm{\Psi_u(0,s)-\Psi_v(0,s)}\leq \abs{u-v} \tilde G(s)$ such that $\tilde G(s)\in L^1(\R^-)\cap L^2(\R^-)$.
\end{enumerate}
If we additionally assume that $\int_{\R}x^4\nu(dx)<\infty$ and $|s|F(s),\tilde F(s),G(s),\tilde G(s)\in L^1(\R^-)\cap L^4(\R^-)$, then $\tilde{Y}_{u}$ is also a locally stationary approximation of $Y_N$ for $p=4$.
\end{Proposition}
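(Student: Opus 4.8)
The plan is to mirror the proof of Proposition \ref{proposition:car1locstat}, verifying the three requirements of Definition \ref{definition:statapproxconttime} for $p=2$ and then for $p=4$: that $\tilde{Y}_u$ is stationary and ergodic with $\sup_{u\in\R^+}\lVert\tilde{Y}_u(0)\rVert_{L^p}<\infty$, that $\lVert\tilde{Y}_u(t)-\tilde{Y}_v(t)\rVert_{L^p}\leq C\lvert u-v\rvert$, and that $\lVert Y_N(t)-\tilde{Y}_t(Nt)\rVert_{L^p}\leq C/N$, uniformly in $t\in\R$, $u,v\in\R^+$ and $N\in\N$. First, conditions (b), (c) and (e) give the pointwise bounds $\lvert g_N(Nt,r)\rvert\leq\mathbb{1}_{\{r\geq0\}}\lVert B(t)\rVert F(-r)\sup_s\lVert C(s)\rVert$ and $\lvert g(u,r)\rvert\leq\mathbb{1}_{\{r\geq0\}}\sup_s\lVert B(s)\rVert G(-r)\sup_s\lVert C(s)\rVert$, so $g_N(Nt,\cdot),g(u,\cdot)\in L^1(\R)\cap L^2(\R)$ and the integrals exist in $L^2$ by Section \ref{sec4-1}; moreover $\tilde{Y}_u$ has a time-shift invariant kernel, hence is stationary, and is ergodic by \cite[Theorem 3.5]{FS2013}.

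For $p=2$ I would use the second-moment identity $\lVert\int_\R f(s)L(ds)\rVert_{L^2}^2=\Sigma_L\int_\R f(s)^2ds+\mu_L^2(\int_\R f(s)ds)^2$ with $\Sigma_L=\Sigma+\int x^2\nu(dx)$, $\mu_L=\gamma+\int_{\lvert x\rvert>1}x\nu(dx)$. The uniform bound on $\lVert\tilde{Y}_u(0)\rVert_{L^2}$ is then immediate from the pointwise bound on $g(u,\cdot)$ and $G\in L^1(\R^-)\cap L^2(\R^-)$. For the Lipschitz-in-$u$ estimate I would split, for $r\geq0$,
\[
g(u,r)-g(v,r)=(B(u)-B(v))'\Psi_u(0,-r)C(u)+B(v)'(\Psi_u(0,-r)-\Psi_v(0,-r))C(u)+B(v)'\Psi_v(0,-r)(C(u)-C(v)),
\]
bounding the three terms by, respectively, the Lipschitz continuity of $B$ with (b), (e); condition (f); and the Lipschitz continuity of $C$ with (b), (e). This yields $\lvert g(u,r)-g(v,r)\rvert\leq\lvert u-v\rvert H(-r)$ with $H\in L^1(\R^-)\cap L^2(\R^-)$, hence $\lVert\tilde{Y}_u(t)-\tilde{Y}_v(t)\rVert_{L^2}\leq C\lvert u-v\rvert$. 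For the approximation rate I would split, for $r\geq0$,
\[
g_N(Nt,r)-g(t,r)=B(t)'(\Psi_{N,t}(0,-r)-\Psi_t(0,-r))C\!\left(\tfrac{-r}{N}+t\right)+B(t)'\Psi_t(0,-r)\left(C\!\left(\tfrac{-r}{N}+t\right)-C(t)\right),
\]
where letting $N\to\infty$ in (c) and using (d) first gives $\lVert\Psi_t(0,\cdot)\rVert\leq F$. The first term is controlled by (d) and (b); the second by the Lipschitz continuity of $C$, which produces the factor $\lvert r\rvert/N$, together with $\lVert\Psi_t(0,-r)\rVert\leq F(-r)$. Since $\tilde{F}\in L^1(\R^-)\cap L^2(\R^-)$ and $\lvert s\rvert F(s)\in L^1(\R^-)\cap L^2(\R^-)$ by (c), (d), this gives $\lvert g_N(Nt,r)-g(t,r)\rvert\leq N^{-1}\eta(-r)$ with $\eta\in L^1(\R^-)\cap L^2(\R^-)$ independent of $t$, hence $\lVert Y_N(t)-\tilde{Y}_t(Nt)\rVert_{L^2}\leq C/N$, settling $p=2$.

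For $p=4$ I would replace the second-moment identity by Lemma \ref{lemma:4thmoment}, which expresses $E[(\int fL(ds))^4]$ as a combination of $\int\lvert f\rvert^k ds$, $k=1,\dots,4$, and moments of $\nu$; this is applicable since $\int x^4\nu(dx)<\infty$. The pointwise kernel bounds derived above involve the functions $G$, $\tilde{G}$, $\tilde{F}$ and $\lvert s\rvert F(s)$, which by the extra hypothesis lie in $L^1(\R^-)\cap L^4(\R^-)$, hence in $L^2(\R^-)\cap L^3(\R^-)$ by interpolation. Consequently each of the five terms in the expansion of $\lVert\tilde{Y}_u(t)-\tilde{Y}_v(t)\rVert_{L^4}^4$ (resp. $\lVert Y_N(t)-\tilde{Y}_t(Nt)\rVert_{L^4}^4$) is $O(\lvert u-v\rvert^4)$ (resp. $O(N^{-4})$), and likewise $\sup_u\lVert\tilde{Y}_u(0)\rVert_{L^4}<\infty$, which gives the claim for $p=4$. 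The routine but somewhat lengthy part is the three-fold splitting of the matrix-valued kernel differences and the bookkeeping of the $L^p$-integrability of each piece; the only point requiring a little care beyond direct estimation is transferring the bound in (c) from $\Psi_{N,t}(0,\cdot)$ to the frozen-coefficient solution $\Psi_t(0,\cdot)$ via (d), and recognizing that the Lipschitz continuity of $C$ is exactly what forces the weight $\lvert s\rvert$ in condition (c).
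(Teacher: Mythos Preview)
Your proposal is correct and follows essentially the same route as the paper: stationarity and ergodicity via \cite[Theorem 3.5]{FS2013}, the $L^2$-isometry $\lVert\int f\,dL\rVert_{L^2}^2=\Sigma_L\int f^2+\mu_L^2(\int f)^2$, the same three-fold telescoping for $g(u,\cdot)-g(v,\cdot)$, a two-fold telescoping for $g_N(Nt,\cdot)-g(t,\cdot)$, and Lemma~\ref{lemma:4thmoment} for $p=4$. The only cosmetic difference is in the second decomposition: the paper writes $\Psi_{N,t}C(\tfrac{s}{N}+t)-\Psi_tC(t)=\Psi_{N,t}\bigl(C(\tfrac{s}{N}+t)-C(t)\bigr)+(\Psi_{N,t}-\Psi_t)C(t)$, so it can bound the $C$-difference term directly by (c), whereas you telescope through $\Psi_t$ and must first deduce $\lVert\Psi_t(0,s)\rVert\leq F(s)$ from (c) and (d)---which you do correctly and which is in fact the right way to cover $t\in\R$ rather than only $t\in\R^+$ as in (e).
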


\begin{proof}
It is clear that $\tilde{Y}_{u}$ is stationary for all $u\in\R^+$. From \cite[Theorem 3.5]{FS2013} it follows, that $\tilde{Y}_{u}$ is ergodic. Let $S_B=\sup_{s\in\R}\norm{B(s)}<\infty$, $S_C=\sup_{s\in\R}\norm{C(s)}<\infty$, $\mu_L= \gamma+\int_{|x|>1}x\nu(dx)$ and $\Sigma_L=\Sigma+\int_{\R}x^2\nu(dx)$. First, we note that
\begin{gather*}
\Lnorm{\tilde{Y}_{u}(t)}^2\leq \Sigma_LS_BS_C \int_{-\infty}^0G(s)^2ds+\mu_LS_B^2S_C^2\left(\int_{-\infty}^0G(s)ds\right)^2<\infty.
\end{gather*}
Now, for $u,v\in\R^+$
\begin{align*}
&\norm{ \tilde{Y}_{u}(t)-\tilde{Y}_{v}(t)}_{L^2}^2\\
&=\Sigma_L\int_\R\left( \mathbb{1}_{\{t-s\geq0\}} B(u)' \Psi_{u}(0,-(t-s))C\left(u \right) -\mathbb{1}_{\{t-s\geq0\}} B(v)' \Psi_{v}(0,-(t-s))C\left(v \right) \right)^2ds\\
&\quad +\mu_L^2 \Bigg(  \int_\R \mathbb{1}_{\{t-s\geq0\}} B(u)' \Psi_{u}(0,-(t-s))C\left(u \right)
 -\mathbb{1}_{\{t-s\geq0\}} B(v)' \Psi_{v}(0,-(t-s))C\left(v \right)ds\Bigg)^2\\
&=:P_1+P_2.
\end{align*}
For $P_1$ we obtain
\begin{align*}
P_1=& \Sigma_L\int_{-\infty}^{t}\abs{B(u)'\Psi_{u}(0,-(t-s))C(u)-B(v)'\Psi_{v}(0,-(t-s))C(v)}^2ds\\
=&\Sigma_L\int_{-\infty}^{t}\!\Bigg( B(u)'\Psi_{u}(0,-(t-s))C(u)\!-\!B(v)'\Psi_{u}(0,-(t-s))C(u)\!+\!B(v)'\Psi_{u}(0,-(t-s))C(u)\\
&-B(v)'\Psi_{v}(0,-(t-s))C(u)+B(v)'\Psi_{v}(0,-(t-s))C(u)-B(v)'\Psi_{v}(0,-(t-s))C(v)\Bigg)^2 ds\\
\leq& 3\Sigma_L\!\!\int_{-\infty}^{0}\! \!\!\bigg(\! B(u)'\Psi_{u}(0,s)C(u)\!-\!B(v)'\Psi_{u}(0,s)C(u)\! \bigg)^2\\
&\!\!+\!\bigg(\! B(v)'\Psi_{u}(0,s)C(u)\!-\!B(v)'\Psi_{v}(0,s)C(u)\!\bigg)^2 \!\!+\!\bigg(B(v)'\Psi_{v}(0,s)C(u)\!-\!B(v)'\Psi_{v}(0,s)C(v)\bigg)^2 ds.
\end{align*}
Now, 
\begin{align*}
\bigg( B(u)'\Psi_{u}(0,s)C(u)-B(v)'\Psi_{u}(0,s)C(u) \bigg)^2 &\leq  \norm{ B(u)'-B(v)'}^2 \norm{\Psi_{u}(0,s)C(u) }^2\\
&\leq  S_C^2L_B^2 (u-v)^2 G(s)^2.
\end{align*}
Analogously we obtain,
\begin{gather*}
\big(B(v)'\Psi_{v}(0,s)C(u)-B(v)'\Psi_{v}(0,s)C(v) \big)^2\leq S_B^2 L_C^2  (u-v)^2 G(s)^2
\end{gather*}
and
\begin{align*}
\big( B(v)'\Psi_{u}(0,s)C(u)-B(v)'\Psi_{v}(0,s)C(u) \big)^2 &\leq S_B^2 S_C^2 (u-v)^2 \norm{\Psi_{u}(0,s)-\Psi_{v}(0,s)}^2\\
&\leq S_B^2 S_C^2 (u-v)^2 \tilde G(s)^2,
\end{align*}
such that
\begin{gather*}
P_1 \leq 3\Sigma_L (u-v)^2 \int_{-\infty}^{0}\left( \left(S_C^2L_B^2+ S_B^2 L_C^2\right) G(s)^2+ S_B^2 S_C^2 \tilde{G}(s)^2 \right)ds.
\end{gather*}
For $P_2$ it holds
\begin{align*}
P_2&\leq \mu_L^2 \bigg( \int_{-\infty}^{0} \abs{ B(u)'\Psi_{u}(0,s)C(u)-B(v)'\Psi_{u}(0,s)C(u) }\\
&\quad+\! \abs{ B(v)'\Psi_{u}(0,s)C(u)\!-\!B(v)'\Psi_{v}(0,s)C(u)}\!+\!\abs{B(v)'\Psi_{v}(0,s)C(u)\!-\!B(v)'\Psi_{v}(0,s)C(v) } ds\!\bigg)^2\\
&\leq \mu_L^2 (u-v)^2 \bigg( \int_{-\infty}^0\left(S_CL_B+ S_B L_C\right) G(s)+ S_B S_C \tilde G(s)ds\bigg)^2.
\end{align*}
Overall, we obtain
\begin{gather*}
\norm{ \tilde{Y}_{u}(t)-\tilde{Y}_{v}(t)}_{L^2} \leq
D_1 \abs{u-v}
\end{gather*}
for a constant $D_1>0$. Second,
\begin{align*}
\Lnorm{Y_N(t)-\tilde{Y}_t(Nt)}^2=&
\Sigma_L \int_\R \bigg( \mathbb{1}_{\{Nt-s\geq0\}} B(t)'\Psi_{N,t}(0,-(Nt-s))C\left(\frac{-(Nt-s)}{N}+t \right) \\
&\qquad\quad - \mathbb{1}_{\{Nt-s\geq0\}} B(t)' \Psi_{t}(0,-(Nt-s))C\left(t \right)\bigg)^2ds\\
&+\mu_L^2 \Big(\int_\R\mathbb{1}_{\{Nt-s\geq0\}} B(t)' \Psi_{N,t}(0,-(Nt-s))C\left(\frac{-(Nt-s)}{N}+t \right)\\
& \qquad-\mathbb{1}_{\{Nt-s\geq0\}} B(t)' \Psi_{t}(0,-(Nt-s))C\left(t \right) ds \Big)^2\\
=:& P_3+P_4.
\end{align*}
For $P_3$ we obtain
\begin{align*}
P_3
=&\Sigma_L\int_{-\infty}^{0}\!\!\Bigg( B(t)'\Bigg(\Psi_{N,t}(0,s) C\left(\frac{s}{N}\!+\!t \right)\!-\!\Psi_{N,t}(0,s)C(t)
\!+\!\Psi_{N,t}(0,s)C(t)\!-\!\Psi_{t}(0,s)C(t)\Bigg)\Bigg)^2ds\\
\leq& 2\Sigma_L S_B^2\int_{-\infty}^{0}\!\! \bigg( \norm{ \Psi_{N,t}(0,s)C\left(\frac{s}{N}\!+\!t\right)\!-\!\Psi_{N,t}(0,s)C(t)}^2
\!+\!\norm{ \Psi_{N,t}(0,s)C(t)\!-\!\Psi_{t}(0,s)C(t)}^2\!\bigg)ds\\
\leq&  2\Sigma_L S_B^2\int_{-\infty}^{0} \norm{  \Psi_{N,t}(0,s) }^2\norm{C\left(\frac{s}{N}+t\right)-C(t)}^2ds\\
&+2\Sigma_L S_B^2\int_{-\infty}^{0}\norm{C(t)}^2  \norm{ \Psi_{N,t}(0,s) -\Psi_{t}(0,s)}^2ds\\
\leq& 2\Sigma_L S_B^2\int_{-\infty}^{0}\bigg( F(s)^2 \norm{C\left(\frac{s}{N} +t\right)-C(t)}^2\bigg)ds
+ 2\Sigma_L S_B^2 S_C^2 \int_{-\infty}^{0}\norm{\Psi_{N,t}(0,s) -\Psi_{t}(0,s)}^2ds \\
\leq& \frac{2\Sigma_L S_B^2L_C^2}{N^2}\int_{-\infty}^{0} F(s)^2 s^2 du + \frac{2\Sigma_L S_B^2 S_C^2}{N^2} \int_{-\infty}^{0} \tilde F(s)^2 ds,
\end{align*}
since $C(\cdot)$ is Lipschitz with constant $L_C$. For $P_4$ it holds
\begin{align*}
P_4\leq& \mu_L^2S_B^2 \left(\int_{-\infty}^0\! \norm{ \Psi_{N,t}(0,s)C\left(\frac{s}{N}\!+\!t\right)\!-\!\Psi_{N,t}(0,s)C(t)}
\!+\!\norm{ \Psi_{N,t}(0,s)C(t)\!-\!\Psi_{t}(0,s)C(t)}ds\right)^2\\
\leq& \frac{ \mu_L^2S_B^2}{N^2} \left(L_C \int_{-\infty}^{0} F(s) |s| ds + S_C^2 \int_{-\infty}^{0} \tilde F(s) ds \right)^2.
\end{align*}
All together
\begin{gather*}
\Lnorm{Y_N(t)-\tilde{Y}_t(Nt)}\leq
 \frac{D_2}{N}
\end{gather*}
for a constant $D_2>0$.\\
Since $\int_{\R}x^4\nu(dx)<\infty$, Lemma \ref{lemma:4thmoment} gives closed form expressions for $\lVert Y_N(t)-\tilde{Y}_t(Nt)\rVert_{L^4}^4$ and $\norm{ \tilde{Y}_{u}(t)-\tilde{Y}_{v}(t)}_{L^4}^4$. Then, calculations analogous to the steps for $p=2$ show that $\tilde{Y}_u$ is a locally stationary approximation of $Y_N$ for $p=4$.

\end{proof}

\begin{Corollary}\label{corollary:LStvldsp}
Let $Y_N$ be a sequence of time-varying L\'evy-driven state space models as given in (\ref{eq:seqtvLDstatespacesolution}). Then, $\tilde{Y}_{u}$ as given in (\ref{eq:seqlimLDstatespacesolution}) is a locally stationary approximation of $Y_N$ for $p=2$, if 
\begin{enumerate}[label={(\alph*)}]
\item[(a1)] the coefficient functions $A,B$ and $C$ are Lipschitz with constants $L_A,L_B$ and $L_C$, respectively,
\item[(b1)] $\sup_{s\in\R}\norm{B(s)}<\infty$ and $\sup_{s\in\R}\norm{C(s)}<\infty$, 
\item[(c1)] $\{A(t)\}_{t\in\R}$ commutes, i.e. $[A(t),A(s)]=0$ for all $s,t\in\R$,
\item[(d1)] $\norm{e^{\nu\int_s^0A(\frac{\tau}{N}+t)d\tau}}\leq \gamma e^{\nu s \lambda}$, with $\gamma,\lambda>0$ for all $\nu\in[0,1]$, $s<0$, $t\in\R$ and $N\in\N$, and
\item[(e1)] $\norm{e^{-\nu A(t)s}}\leq \tilde\gamma e^{\nu s \tilde\lambda}$, with $\tilde\gamma,\tilde\lambda>0$ for all $\nu\in[0,1]$, $s<0$ and $t\in\R$,
\end{enumerate}
or if 
\begin{enumerate}[label={(\alph*)}]
\item[(a2)] the coefficient functions $A,B$ and $C$ are Lipschitz with constants $L_A,L_B$ and $L_C$, respectively,
\item[(b2)] $\sup_{s\in\R}\norm{B(s)}<\infty$ and $\sup_{s\in\R}\norm{C(s)}<\infty$, 
\item[(c2)] $\{A(t)\}_{t\in\R}$ commutes, i.e. $[A(t),A(s)]=0$ for all $s,t\in\R$,
\item[(d2)] the eigenvalues $\lambda_j(t)$ of $A(t)$ for $j=1,\ldots,p$ satisfy $\sup_{t\in\R}\max_{j=1,\ldots,p}\mathfrak{Re}(\lambda_j(t))<0$ and
\item[(e2)] $A(t)$ is diagonalizable for all $t\in\R$.
\end{enumerate}
If additionally $\int_{\R}x^4\nu(dx)<\infty$, the conditions (a1)-(e1) and (a2)-(e2) are also sufficient for $\tilde{Y}_{u}$ to be a locally stationary approximation of $Y_N$ for $p=4$.
\end{Corollary}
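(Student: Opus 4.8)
The plan is to verify that each of the two condition sets, (a1)--(e1) and (a2)--(e2), implies the six hypotheses (a)--(f) of Proposition~\ref{proposition:LStvldsp}; the corollary, including the $p=4$ statement, then follows at once from that proposition. The common starting point is that the commutativity assumption (c1) (resp.\ (c2)) gives $[A(t),\int_{s}^{0}A(\tau)\,d\tau]=0$, so by the discussion around~(\ref{eq:commcond}) the transition matrices are genuine matrix exponentials:
\begin{gather*}
\Psi_{N,t}(0,s)=e^{\int_{s}^{0}A\left(\frac{\tau}{N}+t\right)d\tau},\qquad
\Psi_{t}(0,s)=e^{-A(t)s},\qquad
\Psi_{u}(0,s)=e^{-A(u)s}.
\end{gather*}

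First I would treat (a1)--(e1). Items (a1)--(b1) are literally (a)--(b) of Proposition~\ref{proposition:LStvldsp}. Taking $\nu=1$ in (d1), (e1) gives $\|\Psi_{N,t}(0,s)\|\le\gamma e^{\lambda s}=:F(s)$ and $\|\Psi_{u}(0,s)\|\le\tilde\gamma e^{\tilde\lambda s}=:G(s)$, and $|s|F(s),G(s)\in L^{1}(\R^{-})\cap L^{2}(\R^{-})$, which is (c) and (e). For (d) and (f) I would use that, all the $A(\cdot)$ commuting, the matrices $X:=\int_{s}^{0}A(\tfrac{\tau}{N}+t)\,d\tau$ and $Y:=-sA(t)$ commute, as do $-sA(u)$ and $-sA(v)$, whence
\begin{gather*}
e^{X}-e^{Y}=\int_{0}^{1}e^{\nu X}e^{(1-\nu)Y}(X-Y)\,d\nu .
\end{gather*}
Bounding $\|e^{\nu X}\|\le\gamma e^{\nu\lambda s}$ and $\|e^{(1-\nu)Y}\|\le\tilde\gamma e^{(1-\nu)\tilde\lambda s}$ by (d1)--(e1) yields $\|e^{\nu X}e^{(1-\nu)Y}\|\le\gamma\tilde\gamma e^{\min(\lambda,\tilde\lambda)s}$ for $s<0$, while Lipschitz continuity of $A$ gives $\|X-Y\|\le\int_{s}^{0}L_{A}\tfrac{|\tau|}{N}\,d\tau=\tfrac{L_{A}s^{2}}{2N}$ and, for the other difference, $\|{-s(A(u)-A(v))}\|\le L_{A}|s|\,|u-v|$. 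Combining, $\|\Psi_{N,t}(0,s)-\Psi_{t}(0,s)\|\le\tfrac{1}{N}\cdot\tfrac{\gamma\tilde\gamma L_{A}}{2}s^{2}e^{\min(\lambda,\tilde\lambda)s}=:\tfrac{\tilde F(s)}{N}$ and $\|\Psi_{u}(0,s)-\Psi_{v}(0,s)\|\le|u-v|\,\tilde\gamma^{2}L_{A}|s|e^{\tilde\lambda s}=:|u-v|\,\tilde G(s)$, with $\tilde F,\tilde G\in L^{1}(\R^{-})\cap L^{2}(\R^{-})$. All hypotheses of Proposition~\ref{proposition:LStvldsp} now hold, so $\tilde Y_{u}$ is a locally stationary approximation for $p=2$.

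Next I would reduce (a2)--(e2) to (a1)--(e1). By (c2) and (e2) the family $\{A(t)\}_{t\in\R}$ consists of pairwise commuting diagonalizable matrices on $\C^{p}$, hence is simultaneously diagonalizable: there is a single invertible $P$ with $P^{-1}A(t)P=\mathrm{diag}(\mu_{1}(t),\dots,\mu_{p}(t))$ for all $t$, each $\mu_{j}(t)$ being an eigenvalue of $A(t)$, so by (d2) $\mathfrak{Re}(\mu_{j}(t))\le-\beta$ for all $t,j$, where $\beta:=-\sup_{t}\max_{j}\mathfrak{Re}(\lambda_{j}(t))>0$. With $\kappa:=\|P\|\,\|P^{-1}\|$ this gives, for $\nu\in[0,1]$ and $s<0$,
\begin{gather*}
\left\|e^{\nu\int_{s}^{0}A\left(\frac{\tau}{N}+t\right)d\tau}\right\|\le\kappa\,e^{\nu\beta s},\qquad
\left\|e^{-\nu A(t)s}\right\|\le\kappa\,e^{\nu\beta s},
\end{gather*}
which are exactly (d1) and (e1) with $\gamma=\tilde\gamma=\kappa$, $\lambda=\tilde\lambda=\beta$; since (a2)--(c2) coincide with (a1)--(c1), the conditions (a2)--(e2) imply (a1)--(e1) and we are back in the previous case. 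For $p=4$ one invokes the corresponding part of Proposition~\ref{proposition:LStvldsp}: the functions $F,\tilde F,G,\tilde G$ built above are polynomials in $|s|$ times exponentials with strictly negative rate, so $|s|F(s),\tilde F(s),G(s),\tilde G(s)\in L^{1}(\R^{-})\cap L^{4}(\R^{-})$ automatically, and the extra assumption $\int_{\R}x^{4}\nu(dx)<\infty$ supplies the rest.

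I expect the main obstacle to be the step (a2)--(e2) $\Rightarrow$ (a1)--(e1), specifically the simultaneous diagonalizability of the (possibly infinite) commuting family $\{A(t)\}_{t\in\R}$ of diagonalizable matrices; this is handled by decomposing $\C^{p}$ into joint eigenspaces, using that the restriction of a diagonalizable operator to an invariant subspace stays diagonalizable, the process terminating since $\dim\C^{p}<\infty$. The only other point needing care is that the crude bounds for exponentials of sums of commuting matrices introduce the factors $s^{2}$ and $|s|$ from the Lipschitz estimates; these are harmless for the $L^{1}\cap L^{2}$ (and, for $p=4$, $L^{4}$) requirements precisely because all the exponential rates are strictly negative.
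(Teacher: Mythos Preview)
Your proposal is correct and follows essentially the same route as the paper: verify conditions (c)--(f) of Proposition~\ref{proposition:LStvldsp} under (a1)--(e1) via an integral representation of $e^{X}-e^{Y}$ combined with the Lipschitz estimate on $A$, and then reduce (a2)--(e2) to (a1)--(e1) by simultaneous diagonalization of the commuting family $\{A(t)\}$. The only cosmetic difference is that the paper invokes the general identity $e^{A}-e^{B}=\int_{0}^{1}e^{\nu B}(A-B)e^{(1-\nu)A}\,d\nu$ (valid without commutativity, citing \cite{H2008}), whereas you use the commuting variant $e^{X}-e^{Y}=\int_{0}^{1}e^{\nu X}e^{(1-\nu)Y}(X-Y)\,d\nu$; since (c1)/(c2) is assumed, the two are equivalent and yield identical bounds.
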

\begin{proof}
It is enough to show show the conditions (c)-(f) of Proposition \ref{proposition:LStvldsp}. Since $[A(t),A(s)]=0$ for all $s,t\in\R$, it holds $\Psi_{N,t}(t_1,t_0)=e^{\int_{t_0}^{t_1}A(\frac{\tau}{N}+t)d\tau}$ and $\Psi_{t}(t_1,t_0)=e^{A(t)(t_1-t_0)}$. Assume now that (a1)-(e1) hold. We obtain
\begin{align*}
\norm{\Psi_{N,t}(0,s)}&=\norm{e^{\int_{s}^{0}A(\frac{\tau}{N}+t)d\tau}}\leq \gamma e^{ s \lambda} \text{, where }|s|\gamma e^{ s \lambda}\in L^1(\R^-)\cap L^4(\R^-) \text{ and}\\
\norm{\Psi_{u}(0,s)}&=\norm{e^{-A(u)s}}\leq \tilde\gamma e^{ s \tilde\lambda} \in L^1(\R^-)\cap L^4(\R^-).
\end{align*}
In order to show (d) and (f), we first note that (see \cite[page 238]{H2008})
\begin{align*}
e^A-e^B=\int_0^1 e^{\nu B}(A-B)e^{(1-\nu)A}d\nu.
\end{align*}
Then,
\begin{align*}
\norm{\Psi_{N,t}(0,s) -\Psi_{t}(0,s)}=&\norm{ e^{\int_{s}^{0}A(\frac{\tau}{N}+t)d\tau} - e^{-A(t)s} }\\
=& \norm{ \int_0^1 e^{\nu\int_{s}^0A\left(\frac{\tau}{N}+t\right)d\tau} \left(\int_{s}^0A\left(\frac{\tau}{N}+t\right)d\tau +A(t)s \right)e^{-(1-\nu)A(t)s} d\nu  }\\
\leq& \norm{ \int_{s}^0A\left(\frac{\tau}{N}+t\right)d\tau +A(t)s}  \int_0^1\norm{ e^{\nu\int_{s}^0A\left(\frac{\tau}{N}+t\right)d\tau}} \norm{ e^{-(1-\nu)A(t)s}} d\nu\\
\leq& \int_{s}^0 \norm{A\left(\frac{\tau}{N}+t\right) -A(t)} d\tau  \int_0^1 \gamma e^{\nu\lambda  s} \tilde\gamma e^{(1-\nu)\tilde\lambda  s} d\nu\\
\leq& \int_{s}^0 L_A\frac{|\tau|}{N}d\tau \gamma \tilde\gamma\int_0^1  e^{\nu\hat\lambda  s}  e^{(1-\nu)\hat\lambda  s} d\nu=\frac{ L_A \gamma \tilde\gamma}{2N} s^2 e^{\hat\lambda s},
\end{align*}
where $\hat\lambda=\min(\lambda,\tilde\lambda)>0$ and $\tilde F(s)= \frac{L_A \gamma \tilde\gamma}{2} s^2 e^{\hat\lambda s}\in L^1(\R^-)\cap L^4(\R^-)$. Finally,
\begin{align*}
\norm{\Psi_u(0,s)\!-\!\Psi_v(0,s)} =& \norm{e^{-A(u)s}\!-\!e^{-A(v)s} }
\leq |s| \norm{A(u)\!-\!A(v)}  \int_0^1 \norm{e^{-\nu A(u)s}} \norm{ e^{-(1-\nu)A(v)s}}d\nu\\
\leq& L_A |s| \abs{u-v} \int_0^1 \tilde\gamma e^{\nu \tilde\lambda  s}  \tilde\gamma e^{(1-\nu)\tilde\lambda  s} d\nu = \abs{u-v}  L_A \tilde\gamma^2 |s| e^{ \tilde\lambda  s},
\end{align*}
where $\tilde G(s)=L_A \tilde\gamma^2 |s| e^{ \tilde\lambda  s}\in L^1(\R^-)\cap L^4(\R^-)$.\\
For the second part we assume (a2)-(e2) to hold. It suffices to show that (d1) and (e1) hold. Since $\{A(t)\}$ is mutually commutative and (e2) holds, $\{A(t)\}$ is simultaneously diagonalizable (see \cite[Theorem 1.3.12]{HJ1990}). Therefore, there exists a regular matrix $S$ such that $SA\left(\frac{s}{N}+t\right)S^{-1}=diag(\lambda_1\left(\frac{s}{N}+t\right),\ldots, \lambda_p\left(\frac{s}{N}+t\right) )=D\left(\frac{s}{N}+t\right)$. In the following, we set $-\beta=\sup_{t\in\R}\max_{j=1,\ldots,p}$ $\mathfrak{Re}(\lambda_i(t))<0$ and $\norm{\cdot}_{Spec}$ denotes the operator norm induced by the euclidean norm. Then,
\begin{align*}
\norm{ e^{\nu \int_s^0A\left(\frac{\tau}{N}+t\right)d\tau} }=& \norm{ e^{\nu \int_s^0 S^{-1} D\left(\frac{\tau}{N}+t\right)Sd\tau} }= \norm{  S^{-1}e^{\nu \int_s^0 D\left(\frac{\tau}{N}+t\right)d\tau}S }\\
\leq& \norm{S^{-1}} \norm{S }\norm{e^{\nu \int_s^0 D\left(\frac{\tau}{N}+t\right)d\tau} }\leq C\norm{e^{\nu \int_s^0 D\left(\frac{\tau}{N}+t\right)d\tau} }_{Spec}\\
=&C \max\left\{\!\sqrt{\mu}, \mu\in\sigma\!\left(\left(e^{\nu \int_s^0 D\left(\frac{\tau}{N}+t\right)d\tau}\right)'\left(e^{\nu \int_s^0 D\left(\frac{\tau}{N}+t\right)d\tau}\right)\right)\! \right\}\\
=&C \max\left\{\!\sqrt{\mu}, \mu\in\sigma\!\left(e^{\nu \int_s^0 D\left(\frac{\tau}{N}+t\right)'+D\left(\frac{\tau}{N}+t\right)d\tau}\right)\! \right\}
\!=\! C\!\! \max_{j=1,\ldots,p}\!\! \sqrt{e^{2\nu \int_s^0\mathfrak{Re} \left(\lambda_j\left(\frac{\tau}{N}+t\right) \right)d\tau }}\\
=& C \max_{j=1,\ldots,p}e^{\nu \int_s^0\mathfrak{Re} \left(\lambda_j\left(\frac{\tau}{N}+t\right) \right)d\tau }\leq C e^{\nu s\beta}
\end{align*}
for a constant $C>0$. Analogously we obtain 
\begin{gather*}
\norm{e^{-\nu A(t)s}}\leq C\norm{e^{-\nu D(t)s} }_{Spec} \leq C e^{\nu s \beta}.
\end{gather*}
\end{proof}

The next theorem establishes a number of asymptotic results for different sample moments and observations from a sequence of time-varying L\'evy-driven state space models based on our results from Section \ref{sec3}.\\

\begin{Theorem}\label{theorem:statisticfortvlvsp}
Let $Y_N$ be a sequence of time-varying L\'evy-driven state space models as given in (\ref{eq:seqtvLDstatespacesolution}) and assume that one of the following sets of conditions holds: 
\begin{enumerate}[label={(\alph*)}]
\item[(A1)] conditions (a)-(f) from Proposition \ref{proposition:LStvldsp}.
\item[(A2)] conditions (a1)-(e1) from Corollary \ref{corollary:LStvldsp}.
\item[(A3)] conditions (a2)-(d2) from Corollary \ref{corollary:LStvldsp}. 
\end{enumerate}
Then, we obtain the following results:
\begin{enumerate}[label={(\alph*)}]
\item For observations sampled according to \hyperref[observations:O1]{(O1)} or \hyperref[observations:O2]{(O2)}, we obtain for all $u\in\R^+$ and $k\in\N_0$
\begin{align}
\frac{\delta_N}{b_N} \sum_{i=-m_N}^{m_N}K\left(\frac{\tau_i^N-u}{b_N} \right)Y_N(\tau_i^N) &\overset{L^1}{\underset{N\rightarrow\infty}{\longrightarrow}}E[\tilde{Y}_u(0)]\text{ and for \hyperref[observations:O1]{(O1)} also}\label{equation:tvldspsamplemean}\\
\frac{\delta_N}{b_N} \sum_{i=-m_N}^{m_N}K\left(\frac{\tau_i^N-u}{b_N} \right)Y_N(\tau_i^N)Y_N\left(\tau_i^N+\frac{k}{N}\right)&\overset{L^1}{\underset{N\rightarrow\infty}{\longrightarrow}}E[\tilde{Y}_u(0)\tilde{Y}_u(k)]\label{equation:tvldspsamplecov}
\end{align}
with $\tilde{Y}_{u}$ as given in (\ref{eq:seqlimLDstatespacesolution}). 
\item If we additionally assume that either (A2) or (A3) hold and
\begin{enumerate}[label={(\alph*)}]
\setcounter{enumi}{2}
\item[(B)] $\int_{|x|>1}|x|^{2+\varepsilon}\nu(dx)<\infty$ for some $\varepsilon>0$,
\end{enumerate}
then the convergence in (\ref{equation:tvldspsamplecov}) also holds for \hyperref[observations:O2]{(O2)}. 
\item If in addition to (b) it holds that
\begin{enumerate}[label={(\alph*)}]
\setcounter{enumi}{2}
\item[(C)] $E[L(1)]=0$,
\item[(D)] $\sqrt{m_N}b_N\rightarrow0$, as $N\rightarrow\infty$ and
\item[(E)] $\sigma(u)^2>0$, where $\sigma(u)^2=\begin{cases}\frac{1}{2}E[\tilde{Y}_u(0)^2]+\sum_{h=1}^\infty E[\tilde{Y}_u(0)\tilde{Y}_u(h)] , &\text{ if \hyperref[observations:O1]{(O1)} holds,}\\
E[\tilde{Y}_u(0)^2],&\text{ if \hyperref[observations:O2]{(O2)} holds,} \end{cases}$
\end{enumerate}
then $\sigma(u)^2<\infty$ and
\begin{gather}\label{equation:tvldspcltsamplemean}
\sqrt{\frac{\delta_N}{b_N}} \sum_{i=-m_N}^{m_N}K_{rect}\left(\frac{\tau_i^N-u}{b_N}\right)Y_N(\tau_i^N)\overset{d}{\underset{N\rightarrow\infty}{\longrightarrow}} \sigma(u)^2 Z,
\end{gather}
where $Z$ is a standard normally distributed random variable and $K_{rect}$ as defined in (\ref{equation:rectangularkernel}). 
\item[(d)] If in addition to (c) it holds that
\begin{enumerate}[label={(\alph*)}]
\setcounter{enumi}{5}
\item[(F)] $\int_{|x|>1}|x|^{4+\varepsilon}\nu(dx)<\infty$ for some $\varepsilon>0$ \text{ and}
\item[(G)] 
$\tilde{\sigma}(u)^2>0$, where \\
$\tilde{\sigma}(u)^2=\begin{cases}\frac{1}{2}E[\tilde{Y}_u(0)^2\tilde{Y}_u(k)^2]+\sum_{h=1}^\infty E[\tilde{Y}_u(0)\tilde{Y}_u(k)\tilde{Y}_u(h)\tilde{Y}_u(h+k)] , &\text{ if \hyperref[observations:O1]{(O1)} holds,}\\
E[\tilde{Y}_u(0)^2\tilde{Y}_u(k)^2],&\text{ if \hyperref[observations:O2]{(O2)} holds,} \end{cases}$
\end{enumerate}
then $\tilde{\sigma}(u)^2<\infty$ and
\begin{gather}\label{equation:tvldspcltsamplecov}
\begin{gathered}
\sqrt{\frac{\delta_N}{b_N}}\! \sum_{i=-m_N}^{m_N} K_{rect} \left(\frac{\tau_i^N-u}{b_N}\right)\left(Y_N(\tau_i^N)Y_N(\tau_i^N+\frac{k}{N})-Cov(\tilde{Y}_u(0)\tilde{Y}_u(k))\right)\\
\overset{d}{\underset{N\rightarrow\infty}{\longrightarrow}} \tilde{\sigma}(u)^2Z.
\end{gathered}
\end{gather}
\end{enumerate}
\end{Theorem}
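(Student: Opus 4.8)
The plan is to obtain Theorem~\ref{theorem:statisticfortvlvsp} as a direct application of the general asymptotic machinery of Section~\ref{sec3}, in complete parallel with the argument already carried out for CAR(1) processes in Theorem~\ref{theorem:statisticforcar1}. The first step is to verify that the family $\tilde{Y}_u$ from (\ref{eq:seqlimLDstatespacesolution}) is a locally stationary approximation of $Y_N$: under (A1) this is Proposition~\ref{proposition:LStvldsp} directly, while under (A2) or (A3) it follows from Corollary~\ref{corollary:LStvldsp}; the fourth-moment versions of these statements, needed later for the covariance central limit theorem, become available once $\int_\R x^4\nu(dx)<\infty$ is assumed. Because the limiting kernel $g(u,\cdot)$ in (\ref{eq:seqlimLDstatespacesolution}) decays exponentially, it lies in $L^q(\R)$ for every $q\geq1$; arguing as in \cite[Corollary 3.4]{CS2018}, and for the exponential rate as in \cite[Theorem 3.36]{CSS2020}, the stationary process $\tilde{Y}_u$ is $\theta$-weakly dependent with exponentially decaying $\theta$-coefficients, so that \hyperlink{DD}{DD($\varepsilon$)} is automatically satisfied for every $\varepsilon>0$. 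The required moment bounds on $\tilde{Y}_u(0)$ follow from the tail assumptions on $\nu$ together with $g(u,\cdot)\in L^p(\R)$ for the relevant $p$.

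For part~(a), the law of large numbers (\ref{equation:tvldspsamplemean}) under \hyperref[observations:O1]{(O1)} is Theorem~\ref{theorem:lawoflargenumbersO1}, and under \hyperref[observations:O2]{(O2)} it is Theorem~\ref{theorem:lawoflargenumbersdiscreteobs}(a), using the $\theta$-weak dependence just established. For the lagged product one takes $g(x_1,\dots,x_{k+1})=x_1x_{k+1}\in\mathcal{L}_{k+1}(1,1)$ (Remark~\ref{remark:inheritancepolynomials}) and applies Proposition~\ref{proposition:inheritanceproperties}(a) to see that $\tilde{Y}_u(t)\tilde{Y}_u(t-k)$ is a locally stationary approximation of $Y_N(t)Y_N(t-k/N)$ for $p=1$; Theorem~\ref{theorem:lawoflargenumbersO1} then gives (\ref{equation:tvldspsamplecov}) under \hyperref[observations:O1]{(O1)}. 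For part~(b), assumption (B) yields $E[|\tilde{Y}_u(0)|^{2+\varepsilon}]<\infty$ (again since $g(u,\cdot)\in L^{2+\varepsilon}(\R)$), Proposition~\ref{proposition:inheritanceproperties}(c) upgrades the product to a $\theta$-weakly dependent process, and Theorem~\ref{theorem:lawoflargenumbersdiscreteobs}(a) extends (\ref{equation:tvldspsamplecov}) to \hyperref[observations:O2]{(O2)}.

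For parts~(c) and~(d) the central limit statements (\ref{equation:tvldspcltsamplemean}) and (\ref{equation:tvldspcltsamplecov}) follow from Theorem~\ref{theorem:centrallimit}. In the first case \hyperlink{DD}{DD($\varepsilon$)} holds by the exponential decay of the $\theta$-coefficients, and (C)--(E) supply the remaining hypotheses, finiteness of $\sigma(u)^2$ coming from \cite[Remark 3.3]{CS2018}. In the covariance case one additionally invokes the $p=4$ locally stationary approximation together with Proposition~\ref{proposition:inheritanceproperties}(c) to obtain a $p=2$ approximation of the product whose $\theta$-coefficients are of order $\mathcal{O}(\theta(h)^{\varepsilon/(1+\varepsilon)})$ and hence still satisfy \hyperlink{DD}{DD($\varepsilon$)}; conditions (F)--(G) then give (\ref{equation:tvldspcltsamplecov}). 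The main obstacle is not a single deep estimate but the bookkeeping: one must check that each of the three hypothesis packages (A1), (A2), (A3) genuinely implies the assumptions of Proposition~\ref{proposition:LStvldsp} or Corollary~\ref{corollary:LStvldsp}, keep careful track of which conclusions need the $p=2$ versus the $p=4$ approximation, and confirm that the exponential-decay input applies uniformly in the approximation point $u$ so that the $\theta$-coefficients entering Theorem~\ref{theorem:centrallimit} indeed satisfy \hyperlink{DD}{DD($\varepsilon$)}.
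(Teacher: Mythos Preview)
Your proposal is correct and follows essentially the same route as the paper's own proof, which is equally terse: it cites \cite[Corollary 3.4]{CS2018} for $\theta$-weak dependence of $\tilde{Y}_u$, refers to the proof of Theorem~\ref{theorem:statisticforcar1} for parts (a)--(b), and invokes \cite[Theorem 3.36]{CSS2020} together with (A2) or (A3) to obtain exponentially decaying $\theta$-coefficients for parts (c)--(d). You have in fact unpacked the reference to Theorem~\ref{theorem:statisticforcar1} in more detail than the paper does, and correctly identified the role of Proposition~\ref{proposition:inheritanceproperties}, Theorems~\ref{theorem:lawoflargenumbersO1}, \ref{theorem:lawoflargenumbersdiscreteobs}, and~\ref{theorem:centrallimit} at each step. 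One small caution: your first paragraph asserts exponential decay of $g(u,\cdot)$ as if it were available under all three hypothesis packages, whereas the paper (and the theorem's own structure, which reserves (b)--(d) for (A2) or (A3)) only claims the exponential rate via \cite{CSS2020} under (A2) or (A3); under (A1) alone you get $\theta$-weak dependence from $g(u,\cdot)\in L^2$, which is all that part (a) needs.
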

\begin{proof}
We first note that \cite[Corollary 3.4]{CS2018} implies that $\tilde{Y}_u$ is $\theta$-weakly dependent. The convergences in (\ref{equation:tvldspsamplemean}) and (\ref{equation:tvldspsamplecov}) then follow analogous to the proof of Theorem \ref{theorem:statisticforcar1}. The proof of \cite[Theorem 3.36]{CSS2020} together with (A2) or (A3) imply that the $\theta$-coefficients of $\tilde{Y}_u$ are exponentially decaying.
Therefore, we also obtain the convergences in (\ref{equation:tvldspcltsamplemean}) and (\ref{equation:tvldspcltsamplecov}).
\end{proof}

\begin{Remark}
In ongoing research, the developed asymptotic results from Section \ref{sec3} are applied to establish a comprehensive asymptotic theory for M-estimators of contrast functions that are based on observations from a sequence of time-varying L\'evy-driven state space processes. The contrast functions under investigation are of finite and infinite memory and satisfy suitable regularity conditions that are in the spirit of the function classes $\mathcal{L}_\infty^{p,q}$ and $\mathcal{L}_{k+1}$ from Section \ref{sec2-3} and \ref{sec2-4}. \\
In particular, this includes consistency and asymptotic normality results for a localized least square estimator for time-varying CAR(1) processes as well as consistency results for a localized quasi maximum-likelihood and a localized Whittle estimator for time-varying L\'evy-driven state space models.
\end{Remark}

\section*{Acknowledgements}
The second author was supported by the scholarship program of the Hanns-Seidel Foundation, funded by the Federal Ministry of Education and Research.

\bibliographystyle{acm}

\end{document}